\def\ds{\displaystyle}
\newtheorem{theorem}{Theorem}[section]
\newtheorem{lemma}[theorem]{Lemma}
\newtheorem{corollary}{Corollary}[theorem]
 \newtheorem{exm}[theorem]{Example}
 \newtheorem{remark}[theorem]{Remark}
\numberwithin{equation}{section}
\begin{document}
\title[{\tiny  On  a product of three theta functions and the number of repre...  }]{ On  a product of three theta functions and the number of representations of integers as  mixed ternary  sums involving squares, triangular, pentagonal and octagonal  numbers }
 \maketitle
 \begin{center}{ N. A. S. Bulkhali$^1$, G. Kavya Keerthana$^2$ and Ranganatha D.$^2$ \\
 \vspace{.5cm}
  
  $^1$Department of Mathematics, Faculty of Education-Zabid,\\ Hodeidah University, Hodeidah, Yemen\\
  $^2$Department of Mathematics, Central University of Karnataka\\Kalaburagi-585376, India\\
E-mail: nassbull@hotmail.com,  kavya.27598@gmail.com and ddranganatha@gmail.com }
 \end{center}


\begin{abstract}
 In this paper, we derive a general formula to express the product
of three  theta functions as a linear combination of other products of three
theta functions. Moreover, we use the main formula to deduce a general formula  for the
product of two theta functions. Furthermore,  as applications, we extract several  theorems
in the theory of representation of integers as  mixed ternary sums involving  squares, triangular numbers, generalized  pentagonal numbers and generalized octagonal  numbers.  \\

\noindent\textsc{2000 Mathematics Subject Classification.} 11D72, 11E20, 11E25, 11F27,  14H42.\\

 \noindent\textsc{Keywords and phrases.}
Ramanujan's theta function,  sums of squares, triangular numbers, ternary quadratic forms.
\end{abstract}
\section{Introduction}\label{Introduction}
Ramanujan's general theta function is defined by
\begin{equation}\label{2.1}f(a,b):= \sum_{n=-\infty}^{\infty}{{a}^{n(n+1)/2}}{{b}^{n(n-1)/2}},~~~|ab|<1.\end{equation}

Theta functions have applications in several areas, for instance, in the theory of representation of integers, the theory of  elliptic
functions,  analytic number theory, modular forms  and Riemann surfaces. Theta
functions are also important in physics, which appear in the partition function of strings and two-dimensional
conformal field theories and can appear in certain areas of statistical mechanics. Due to these and many other applications, theta function identities have  great importance, great benefits and are worthy to studies more and more.
The following general formula is due to S. Ramanujan~\cite[ Entry 31]{Adiga3}, in which
theta function is  expressed as a linear combination of other theta functions:\\
Let $U_n=a^{n(n+1)/2}b^{n(n-1)/2}$ and $V_n=a^{n(n-1)/2}b^{n(n+1)/2}$ for each integer $n$. Then
\begin{equation}\label{RamIden}
  f(U_1,V_1)=\sum_{r=0}^{n-1}U_r f \left( \frac{U_{n+r}}{U_r},\frac{V_{n-r}}{U_r}  \right).
\end{equation}

Many mathematicians have studied the product of two theta functions and established several general formulas.
For instance, L. J. Rogers
\cite{Rogers2} has found some general identities based on
Schr\"{o}ter-type theta function identities.  D. Bressoud~\cite{Bressoud1}, \cite{Bressoud2}, H. Yesilyurt~\cite{Yesilyurt2}  and  L.-C. Zhang~\cite{Zhang} have generalized   Rogers' approach.
S.-L. Chen and S.-S. Huang~\cite{SCHEN} have slightly generalized the approach of G. Watson~\cite{Watson2} and established some identities for the products of two  theta functions. C. Adiga and the author~\cite{Adiga2} have derived  general formulas to
express the product of two  theta functions as  linear combinations of other products of two  theta functions.
In~\cite{Bulkhali}, the author employed  the main theorem found in \cite{Adiga2} to establish  new identities that are
analogous to the famous  Schr\"{o}ter's formulas, found in Chapter 16~\cite{Adiga3}.
R. Blecksmith, J. Brillhart, and I. Gerst~\cite{Blecksmith} obtained a general formula
 for the product of
two theta functions as a sum of  products of pairs of theta
functions; this formula generalizes
formulas of H. Schr\"{o}ter which can be found in~\cite{Adiga3}.
Q. Yan~\cite{Yan} has used the technique of L. Carlitz and M. V.
Subbarao~\cite{Carlitz}, to establish a general formula  for the
 product of two Jacobi's triple products found  in
\cite{Chu1}. \\

A few works have been done for the product of three theta functions.
L.-C.~Shen~\cite{Shen} employed the techniques of the Fourier series to derive identities for products of three classical theta functions. As a consequence of his results,  Shen obtained a quintuple product identity and two identities of Ramanujan; some of Shen's identities involve Borweins' cubic theta functions \cite{Borwein}.
Later,  P. C. Toh~\cite[Theorem 3.1]{Toh}, reproved some identities established by Shen in equivalent forms.
 Motivated by the work of Shen,  X. M. Yang \cite{Yang} studied the products of three classical theta functions and established two theta function identities with four parameters using the theory
of theta functions and introduced common generalizations of Hirschhorn--Garvan--Borwein cubic theta functions \cite{HirschhornGar.Borwein} and also re-derived the quintuple product identity, one of Ramanujan's
identities, Winquist's identity and many other interesting identities.
  Cao~\cite{Cao} deduced some new identities for the products of three  theta functions. More recently, H.-C. Zhai, J. Cao and S. Arjika~\cite{ZhaiCaoArjika} established several expansion formulas for products of the Jacobi theta
functions. As applications, they derived some expressions of the powers of $(q; q)_{\infty}=\prod_{n=0}^{\infty}(1-q^{n+1})$ where $|q|<1$, by using these
expansion formulas and some of these expressions are for the product of three theta functions.
Utilizing the correspondence between product identities
for Ramanujan's theta functions and integer matrix exact covering systems,  Z. Cao~\cite{Cao} gave a general theorem
to write a product of  theta functions as  linear combinations
of other products of theta functions,  which is a generalization for many theta function identities found in the literature.\\

Motivated from the above work, in Section \ref{sec.Main}, we prove  a general formula to write the product
of three  theta functions as a linear combination of other products of three
theta functions. To prove our main theorem, we utilize Watson's technique \cite{Watson2},
 which he has used to prove some of  Ramanujan's forty identities.  Here, we state our main theorem

\begin{theorem}\label{MainThm3}  Let $k$ and $r$ be positive integers with $k>r$, $\gcd (2k,r)=1$ or 2  and $\gcd(2k,k-r)=1$. For any integers $g$, $h$, $u$, $v$, $i$ and $j$ with
$g+h=S_1>0$, $g-h=D_1 $, $u+v=S_2>0$,  $u-v=D_2 $, $i+j=S_3>0$, $i-j=D_3$, $ S_1=S_2$ and $2\,S_{{1}}=r \left( k-r \right) S_{{3}}$, we have
{\small
\begin{align}\label{3.3}
f& \left( \epsilon _{{1}}{q}^{g},\epsilon _{{1}}{q}^{h} \right) f
 \left( \epsilon _{{2}}{q}^{u},\epsilon _{{2}}{q}^{v} \right) f
 \left( \epsilon _{{3}}{q}^{i},\epsilon _{{3}}{q}^{j} \right)
 \nonumber \\= &f_{{\delta_{{1}}+\delta_{{2}}}} \left( {q}^{(r \left( k-r \right)
S_{{3}}+D_{{1}}-D_{{2}})/2},{q}^{(r \left( k-r \right) S_{{
3}}-D_{{1}}+D_{{2}})/2} \right)
 \sum_{\alpha={\left[\frac{2 - k}{2}\right]}}^{\left[\frac{k}{2}\right]}(-1)^{\alpha \delta_3} q^{\alpha \left({\alpha}S_{{3}}+D_{{3}} \right)/2} \nonumber \\ & \times f_{{\delta_{{1}}+\delta_{{2}}+r\delta_{{3}}}} \left( {q}^{\left(r
 \left( S_{{3}} \left( k+2 \, \alpha \right) +D_{{3}} \right) +D_{{1}}
+D_{{2}} \right)/2},{q}^{ \left(r \left( S_{{3}} \left( k- 2\, \alpha \right) -D_{
{3}} \right) -D_{{1}}-D_{{2}}\right)/2} \right) \nonumber \\ & \times f_{{\delta_{{1}}+\delta_{{2}}+ \delta_{{3}}}}
 \left( {q}^{\left( \left( k-r \right)  \left( S_{{3}} \left( k-2\,
\alpha \right) -D_{{3}} \right) +D_{{1}}+D_{{2}}\right)/2},{q}^{\left(
 \left( k-r \right)  \left( S_{{3}} \left( k+2\,\alpha \right) +D_{{3}
} \right) -D_{{1}}-D_{{2}}\right)/2} \right) \nonumber \\ & + \left( -1 \right) ^{\delta_{{3}} \left( k-r+1 \right)/2 } f_{{\delta_{{1}}+\delta_{{2}}}} \left( {q}^{ \left(2\,r \left( k-r \right) S_
{{3}}+D_{{1}}-D_{{2}}\right)/2},{q}^{\left(-D_{{1}}+D_{{2}}\right)/2} \right)
\nonumber \\ & \times \Bigg\{ \sum_{\alpha=1}^{\left[\frac{k+1}{2}\right]} (-1)^{\alpha \delta_3+ {\delta_1}} q^{ \left(S_{{1}}+D_{{1}}+\frac{1}{4}\,S_{{3}} \left( -k+r+2\,\alpha-1 \right) ^{2}+\frac{1}{2}
\,D_{{3}} \left( -k+r+2\,\alpha-1 \right)
 \right)/2 }  \nonumber \\ & \times  f_{{\delta_{{1}}+\delta_{{2}}+r\delta_{{3}}}} \left( {q}^{\left(r
 \left( S_{{3}} \left( k+2\,\alpha-1 \right) +D_{{3}} \right) +D_
{{1}}+D_{{2}}\right)/2},{q}^{\left(r \left( S_{{3}} \left( k-2\,\alpha+1
 \right) -D_{{3}} \right) -D_{{1}}-D_{{2}}\right)/2} \right)
 \nonumber \\ & \times  f_{{\delta_{{1}}+\delta_{{2}}+  \delta_{{3}}}}
 \left( {q}^{\left( \left( k-r \right) \left( S_{{3}} \left( 2\,k-2\,\alpha+1 \right) -D_{
{3}} \right)   +D_{{1}}+D_{{2}}\right)/2},{q}^{\left( \left( k-r \right) \left( S_{{3}} \left(2\, \alpha-1 \right) +D_{{3}} \right)   -D_{{1}}-D_{{2}}\right)/2} \right) \nonumber \\ & +
\sum_{\alpha=1}^{\left[\frac{k}{2}\right]} (-1)^{\alpha \delta_3+ {\delta_2}} q^{ \left(S_{{2}}-D_{{2}}+\frac{1}{4}\,S_{{3}} \left( k-r-2\,\alpha+1 \right) ^{2}+\frac{1}{2}
\,D_{{3}} \left( k-r-2\,\alpha+1 \right)
 \right)/2 }
  \nonumber \\ & \times  f_{{\delta_{{1}}+\delta_{{2}}+r\delta_{{3}}}} \left( {q}^{(r
 \left( S_{{3}} \left( k-2\,\alpha+1 \right) +D_{{3}} \right) +D_
{{1}}+D_{{2}})/2},{q}^{(r \left( S_{{3}} \left( k+2\,\alpha-1
 \right) -D_{{3}} \right) -D_{{1}}-D_{{2}})/2} \right)  \nonumber \\ & \times  f_{{\delta_{{1}}+\delta_{{2}}+  \delta_{{3}}}}
 \left( {q}^{( \left( k-r \right)  \left( S_{{3}} \left( 2\,\alpha
-1 \right) -D_{{3}} \right) +D_{{1}}+D_{{2}})/2},{q}^{(
 \left( k-r \right)  \left( S_{{3}} \left( 2\,k-2\,\alpha+1 \right) +D
_{{3}} \right) -D_{{1}}-D_{{2}})/2} \right)
   \Bigg\} .
\end{align}}
Here $\epsilon_i \in \{-1,1\}$,   $\delta_i=
\ds\frac{1-\epsilon_i}{2}$ for $i=1,2,3$ and $[x]$ denote the
greatest integer less than or equal to $x$.
\end{theorem}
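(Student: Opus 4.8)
\emph{Proof proposal.} The plan is to prove \eqref{3.3} by Watson's series--dissection technique: expand every factor on the left into its defining theta series, collapse the two equal--modulus factors by a parity split, and resolve the coupling with the third factor by a dissection modulo $k$. First I would apply the definition \eqref{2.1} to each factor, noting that $\epsilon_i=(-1)^{\delta_i}$, to obtain
\[
f(\epsilon_i q^{a_i},\epsilon_i q^{b_i})=\sum_{n\in\mathbb{Z}}(-1)^{\delta_i n}\,q^{(S_i n^2+D_i n)/2},
\]
where $(a_1,b_1)=(g,h)$, and so on. Multiplying the three series turns the left--hand side of \eqref{3.3} into a triple sum over $(m,n,p)\in\mathbb{Z}^3$ with sign $(-1)^{\delta_1 m+\delta_2 n+\delta_3 p}$ and exponent $\tfrac12(S_1 m^2+D_1 m+S_2 n^2+D_2 n+S_3 p^2+D_3 p)$.

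Because $S_1=S_2$, the $(m,n)$--part of this quadratic form diagonalizes under $s=m+n$, $d=m-n$, a bijection onto the pairs with $s\equiv d\pmod 2$. Splitting this congruence into the two classes $s\equiv d\equiv 0$ and $s\equiv d\equiv 1\pmod 2$ is exactly what produces the two outer summands of \eqref{3.3}. In each class the variable $d$ decouples completely, and after invoking $2S_1=r(k-r)S_3$ its series is precisely the leading factor $f_{\delta_1+\delta_2}(\cdots)$; the integer shift forced in the odd class accounts both for the shifted arguments $q^{(2r(k-r)S_3+D_1-D_2)/2},q^{(-D_1+D_2)/2}$ and, together with the modular step below, for the prefactor $(-1)^{\delta_3(k-r+1)/2}$. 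What survives is a binary sum over the residual variables, whose quadratic part, again using $2S_1=r(k-r)S_3$, is proportional to $r(k-r)s'^2+p^2$ (here $s=2s'$ in the even class and $s=2s'+1$ in the odd one).

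The coupling of this binary sum is resolved by a dissection modulo $k$. Under the integer change of variables $s'=t_1+t_2$, $p=r t_1-(k-r)t_2$ one computes $r(k-r)s'^2+p^2=rk\,t_1^2+(k-r)k\,t_2^2$, so the form splits into two one--dimensional pieces with moduli proportional to $r$ and to $k-r$ respectively; these are exactly the inner factors $f_{\delta_1+\delta_2+r\delta_3}(\cdots)$ and $f_{\delta_1+\delta_2+\delta_3}(\cdots)$ of \eqref{3.3}. This map has determinant $-k$, hence its image is a sublattice of $\mathbb{Z}^2$ of index $k$; letting $(t_1,t_2)$ range over $\mathbb{Z}^2$ while $\alpha$ runs over a complete set of coset representatives recovers every residual pair exactly once. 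Completing the square in each coset introduces the shifts $k\pm2\alpha$ (and, in the odd class, $k\pm(2\alpha-1)$) together with the linear term $D_3$ into the theta arguments and the sign $(-1)^{\alpha\delta_3}$, while the signs $(-1)^{\delta_1}$ and $(-1)^{\delta_2}$ in the two members of the bracketed summand are the residue of the odd--parity class; the ranges $[(2-k)/2]\le\alpha\le[k/2]$ in the even case and $1\le\alpha\le[(k+1)/2]$, $1\le\alpha\le[k/2]$ in the odd case record these representatives.

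The hard part will be this final dissection. I must verify that the coset representatives tile $\mathbb{Z}^2$ with no overlap or omission and that the completed--square exponents land exactly on the stated arguments; this is precisely where the hypotheses $\gcd(2k,r)=1$ or $2$ and $\gcd(2k,k-r)=1$ are used, as they guarantee that the index--$k$ sublattice has the asserted coset structure, that the restricted forms are nondegenerate, and that no two triples $(m,n,p)$ collapse to the same term. The condition $\gcd(2k,k-r)=1$ in particular forces $k-r$ to be odd, which is what reduces the sign character on the $(k-r)$--factor from $(k-r)\delta_3$ to $\delta_3$, yielding the subscript $\delta_1+\delta_2+\delta_3$. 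Carrying the three sign characters and the subscripts $\delta_1+\delta_2+r\delta_3$ and $\delta_1+\delta_2+\delta_3$ consistently through the two splittings is routine but error--prone, so I would first match the identity to first order in each summation index before committing to the full computation.
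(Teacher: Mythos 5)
Your plan is essentially the paper's own proof: both expand the left-hand side into a triple theta series and resolve it by Watson's dissection, and your two-step substitution --- first $s=l+m$, $d=l-m$ with the parity split producing the two outer summands, then the determinant $-k$ map $s'=t_1+t_2$, $p=rt_1-(k-r)t_2$ with $\alpha$ indexing the $k$ cosets --- is exactly the paper's single simultaneous change of indices \eqref{3.a2}, with its coupled residues $a$, $b$, $c$, written as a composition of two maps. The structural points you single out (that the representatives $(0,\alpha)$ tile $\mathbb{Z}^2$ modulo the sublattice, that $\gcd(2k,k-r)=1$ forces $k-r$ odd and reduces the subscript $(k-r)\delta_3$ to $\delta_3$, and that the signs $(-1)^{\delta_1}$, $(-1)^{\delta_2}$ arise from the unit shifts in the odd parity class) are precisely the ones the paper verifies, so what remains is only the bookkeeping you describe.
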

Theorem \ref{MainThm3} can be considered as a generalization for the product of two theta functions.  In Section \ref{pro2thetaF}, using this theorem, we establish some identities for the product of two theta functions that are analogous to some identities found in the literature.

Theorem \ref{MainThm3} has many applications; here, we use it to obtain several theorems in the theory of representations of integers.
Let $\mathbb{N}$ denote the set of all positive integers, let $\mathbb{N}_0=\mathbb{N} \cup \left\{ 0\right\} =\{0,1,2,3,\ldots\}$ and  $\mathbb{Z}$ be the set of all integers. Let  $t_n$  denote the $n^{th}$ triangular number  $t_n = n(n + 1)/2$ with $n \in \mathbb{N}_0$. Let  $p_n$  denote the $n^{th}$ generalized  pentagonal  number  $p_n = n(3\,n + 1)/2$ with $n \in \mathbb{Z}$. Let  $g_n$  denote the $n^{th}$  generalized octagonal  number  $g_n = n(3\,n + 2)$ with $n \in \mathbb{Z}$. It is understood that the squares are $n^2$ with $ n \in \mathbb{Z}$. We define for $N \in \mathbb{N}_0$ ,
\begin{align*}
  r(a_1,a_2,a_3;N):=& \left|  \left\{ (l,m,n) \in \mathbb{Z}^3 | N= a_1\,l^2 +a_2 \,m^2+a_3n^2  \right\}  \right|,\\
   T(a_1,a_2,a_3;N):=& \left|  \left\{ (l,m,n) \in \mathbb{N}_0^3| N= a_1\,t_{l} +a_2 \,t_{m}+a_3t_{n}  \right\}  \right|,
\end{align*}
where $a_1, a_2,  a_3 \in \mathbb{N}$.\\
The theory of the representations of integers has been studied by many great mathematicians such as  Liouville,   Euler, Legendre,  Dirichlet,  Gauss, Eisenstein and Ramanujan (for example see \cite{Dickson2}, \cite{Dickson} and \cite{WhittakerWatson}).
 Hirschhorn \cite{Hirschhorn} established many representation theorems
involving  squares, triangular numbers, pentagonal numbers and octagonal
numbers. A similar work have been done by N. D. Baruah and B. K. Sarmah \cite{BaruahSarmah}.
Recently, Sun in \cite{Sun4}  derived  many relations between $r(a,b,c;8N+a+b+c)$ and $T(a,b,c;N)$ and conjectured some relations proved later by many mathematicians. Motivated by this, in Section \ref{Applications}, as applications of Theorem \ref{MainThm3}, we establish several relations between the number of representations of integers as  ternary sums involving squares, triangular numbers, generalized pentagonal numbers and generalized octagonal numbers.

In $1796$, C. F. Gauss proved his famous result that every integer can be written
as the sum of three triangular numbers. In $1862$, J. Liouville (cf. \cite[p. 23]{Dickson}) proved the following result:\\
 Let $a, b, c$ be positive integers with $a \leq b \leq c$. Then every
$n \in \mathbb{N}_0$ can be written as $a\,t_x + b\, t_y + c\, t_z$ with $x, y, z \in \mathbb{Z}$, if and only if $(a, b, c)$ is
among the following vectors:
\begin{align*}(1, 1, 1), (1, 1, 2), (1, 1, 4), (1, 1, 5), (1, 2, 2), (1, 2, 3), (1, 2, 4).\end{align*}
In $2007$, Z.-W. Sun \cite{Sun} proved the following two results:
\begin{enumerate}
  \item Let $a, b, c$ be positive integers with $a \leq b$. Suppose that every
$n \in  \mathbb{N}_0$ can be written as $a\,x^2 + b\,y^2 + c\,t_z$ with $x,y,z \in \mathbb{Z}$, then $(a, b, c)$ is among the following
vectors:
\begin{align*}(1, 1, 1), (1, 1, 2), (1, 2, 1), (1, 2, 2), (1, 2, 4),\\
(1, 3, 1), (1, 4, 1), (1, 4, 2), (1, 8, 1), (2, 2, 1).\end{align*}
  \item Let $a, b, c$ be positive integers with $b \geq c$. Suppose that every
$n \in \mathbb{N}_0$ can be written as $a\,x^2 + b\,t_y + c\,t_z$ with $x, y, z \in \mathbb{Z}$. Then $(a, b, c)$ is
among the following vectors:
\begin{align*}&(1, 1, 1), (1, 2, 1), (1, 2, 2), (1, 3, 1), (1, 4, 1), (1, 4, 2), (1, 5, 2),\\
(1, 6&, 1), (1, 8, 1), (2, 1, 1), (2, 2, 1), (2, 4, 1), (3, 2, 1), (4, 1, 1), (4, 2, 1).\end{align*}
\end{enumerate}
Observe that, the above results are true  when $x,y,z \in \mathbb{N}_0$ because $t_n=t_{-n-1}$ and $n^2=(-n)^2$ for any $n \in \mathbb{N}_0$.\\


The well-known theorem of Gauss and Legendre states that a non-negative $N$ is a sum of three squares if and only if it is not of the form $4^k(8l + 7)$ with $k,l \in \mathbb{ N}_0$.\\
In $1916$, Ramanujan conjectured that the only positive even numbers not of the form $x^2 + y^2 + 10z^2$ are those $4^k(16l + 6) ~ (k,l \in \mathbb {N})$, which was proved in 1927 by  Dickson~\cite{Dickson1}.
The form $ x^2 +2y^2 +6z^2$ represents all positive integers not of the form $4^k(8l + 5)  $ for some $ k,l \in \mathbb {N}$ (see \cite[pp. 111--112]{Dickson2}). Dickson~\cite[pp. 111--112]{Dickson2} listed 102 similar results.
Motivated by this work,   in Section \ref{Applications}, as consequences of our results, we deduce, for $l,m,n \in \mathbb{Z}$,  the following results:\\
Every non-negative integer of the form $4k+3$, ($ k \in \mathbb{N}_0$) cannot be represented in any of the following forms:
$$  l^2 +m^2+4t_n,  ~~~~ 4p_l+g_m+g_n,~~~~ 12t_l+g_m+g_n, ~~~~ 4t_l+g_m+g_n.$$
Every non-negative integer of the form $4k+1$, ($ k \in \mathbb{N}_0$) cannot be represented in any of the following forms:
$$ 3l^2+3m^2+4t_n, ~~~~~~~~3l^2+3m^2+4p_n.  $$
Every non-negative integer of the form $4k+2$, ($ k \in \mathbb{N}_0$) cannot be represented in any of the following forms:
$$  3l^2 +12t_m+g_n,  ~~~~ 3l^2+4p_m+g_n,~~~~ 9l^2+4p_m+3g_n, ~~~~ 3l^2+4t_m+g_n.$$

\section{Preliminary results}\label{Preliminary}
We need the following definitions and preliminary results:\\
The Jacobi triple product identity~\cite[ Entry 19]{Adiga3} in
Ramanujan's notation is
\begin{equation}f(a,b)= (-a;ab)_\infty(-b;ab)_\infty(ab;ab)_\infty.\nonumber\end{equation}
Using (\ref{2.1}),  we define
\begin{equation}\label{2.1a}
f_{\delta}(a,b) =
\begin{cases}
f(a,b) & \text{if } \delta \equiv 0 \pmod{2},
\\
f(-a,-b) & \text{if } \delta \equiv 1 \pmod{2}.
\end{cases}
\end{equation}
The function $f(a,b)$ satisfies the following
basic properties \cite{Adiga3}:
\begin{align}f(a,b)& = f(b,a)\nonumber\\
\label{2.5}f(-1,a)& = 0,
\intertext{and}
\label{2.4}f(1,a)& = 2f(a,a^{3}).
\end{align}
Ramanujan has defined the following  three special cases of
(\ref{2.1}) \cite[Entry 22]{Adiga3}:
\begin{align}\varphi(q)&:= f(q,q) =  \sum_{n=-\infty}^{\infty} q^{n^2},\nonumber\\
\label{2.8}\psi(q)&:= f(q,q^3) =  \sum_{n=-\infty}^{\infty} {q^{n(2n+1)}}=  \sum_{n=0}^{\infty} {q^{n(n+1)/2}},\\
\intertext{and}
f(-q):&= f(-q,-q^2) =  \sum_{n=-\infty}^{\infty} {(-1)^n}{q^{n(3n+1)/2}}.\nonumber
\end{align}
Also, after Ramanujan, we define
\begin{align*}
 X(q):&= f(q,q^2) =  \sum_{x=-\infty}^{\infty} {q^{x(3x+1)/2}},\\
Y(q):&= f(q,q^5) =  \sum_{x=-\infty}^{\infty} {q^{x(3x+2)}}.
\end{align*}
Using \eqref{2.8} and \eqref{2.4}, we observe that
\begin{equation}
  \sum_{n=-\infty}^{\infty} {q^{n(n+1)/2}}=2\, \sum_{n=0}^{\infty} {q^{n(n+1)/2}}.\nonumber
\end{equation}
In the rest of the proof of our main theorem, we need the definition of the sign function, for any nonzero real number $x$, which is given by
\begin{equation*}
\text{sign}\, x :=
\begin{cases}
1 & \text{if } x>0,
\\
-1 & \text{if } x<0.
\end{cases}
\end{equation*}

\begin{lemma}\label{lemma1}\cite[Entry 29(i)]{Adiga3}.
 If $ab=cd$, then
\begin{align*}
 f(a,b)f(c,d)+f(-a,-b)f(-c,-d)&=2f(ac,bd)f(ad,bc).
\end{align*}
\end{lemma}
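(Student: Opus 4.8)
The plan is to prove the identity directly from the defining series \eqref{2.1}, expanding both theta products on the left-hand side as double series and then matching them term-by-term with the right-hand side after a suitable change of summation indices. First I would record the elementary parity observation that, since $\tfrac{m(m+1)}{2}+\tfrac{m(m-1)}{2}=m^2\equiv m\pmod 2$, the negated theta function satisfies
\[
f(-a,-b)=\sum_{m=-\infty}^{\infty}(-1)^{m}\,a^{m(m+1)/2}b^{m(m-1)/2},
\]
so that $f(-a,-b)f(-c,-d)$ is obtained from $f(a,b)f(c,d)$ simply by inserting a factor $(-1)^{m+n}$ into the double series indexed by $(m,n)\in\mathbb{Z}^2$.

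Adding the two products, the combined summand then carries the factor $1+(-1)^{m+n}$, which equals $2$ when $m+n$ is even and $0$ otherwise. Hence only the pairs $(m,n)$ with $m\equiv n\pmod 2$ survive, giving
\[
f(a,b)f(c,d)+f(-a,-b)f(-c,-d)=2\!\!\sum_{\substack{m,n\in\mathbb{Z}\\ m\equiv n\,(\mathrm{mod}\,2)}}\!\! a^{m(m+1)/2}b^{m(m-1)/2}c^{n(n+1)/2}d^{n(n-1)/2}.
\]
The key step is the substitution $m=i+j,\ n=i-j$, which is a bijection between the same-parity pairs $(m,n)$ and all of $\mathbb{Z}^2$, with inverse $i=(m+n)/2,\ j=(m-n)/2$. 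I would then rewrite the transformed summand and compare it with the $(i,j)$-summand of the target product $f(ac,bd)f(ad,bc)$, namely $(ac)^{i(i+1)/2}(bd)^{i(i-1)/2}(ad)^{j(j+1)/2}(bc)^{j(j-1)/2}$.

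The main (and essentially only) obstacle is the exponent bookkeeping in this comparison, and it is here that the hypothesis $ab=cd$ does its work. A short direct computation of the exponents of $a,b,c,d$ shows that the transformed left-hand summand differs from the $(i,j)$-summand on the right by exactly the factor
\[
a^{ij}b^{ij}c^{-ij}d^{-ij}=(ab)^{ij}(cd)^{-ij}=\left(\frac{ab}{cd}\right)^{ij}.
\]
Because $ab=cd$, this discrepancy is identically $1$, so the two summands coincide for every $(i,j)$. Summing over $i,j\in\mathbb{Z}$ therefore yields precisely $2f(ac,bd)f(ad,bc)$, which completes the proof. The rearrangement and interchange of summation used above are justified by absolute convergence of each theta series under the standing conditions (such as $|ab|<1$) guaranteeing that $f$ is well defined.
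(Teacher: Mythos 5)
Your proof is correct: the parity observation $f(-a,-b)=\sum_m(-1)^m a^{m(m+1)/2}b^{m(m-1)/2}$, the survival of only same-parity pairs $(m,n)$ after adding, the bijection $m=i+j$, $n=i-j$, and the exponent discrepancy $\left(ab/(cd)\right)^{ij}$ all check out. The paper itself offers no proof of this lemma --- it simply cites Entry 29(i) of Chapter 16 of Ramanujan's notebooks --- and your argument is precisely the standard diagonal-substitution proof given in that source, so nothing further is needed.
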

We use  the following lemma very often in this paper:
\begin{lemma}\label{l3}  Let
 $m=\left[ {s}/({s-r}) \right],~ l= m
(s-r)-r,~ k= -m(s-r)+s$ and $h=mr-{m(m-1)(s-r)}/{2}$,~ $0\leq
r<s$. Here $[x]$ denote the largest integer less than or equal to
$x$. Then,
\begin{align*}
    f(q^{-r},q^s)&= q^{-h}f(q^l,q^k),\\
 f(-q^{-r},-q^s) &= (-1)^m q^{-h}f(-q^l,-q^k).
 \end{align*}
 \end{lemma}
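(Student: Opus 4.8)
The plan is to reduce both identities to a single quasi-periodicity (shift) property of Ramanujan's theta function and then to carry out the exponent bookkeeping. First I would establish the shift formula: for every integer $N$,
\[
f(a,b) = a^{N(N+1)/2}\,b^{N(N-1)/2}\, f\!\left(a(ab)^N,\, b(ab)^{-N}\right).
\]
This comes straight out of the definition \eqref{2.1} by replacing the summation index $n$ with $n+N$. Using $\tfrac{(n+N)(n+N+1)}{2} = \tfrac{n(n+1)}{2} + nN + \tfrac{N(N+1)}{2}$ and $\tfrac{(n+N)(n+N-1)}{2} = \tfrac{n(n-1)}{2} + nN + \tfrac{N(N-1)}{2}$, each term of the series acquires the common factor $a^{N(N+1)/2}b^{N(N-1)/2}$ together with $(ab)^{nN}$; the latter is absorbed by passing to $A=a(ab)^N$, $B=b(ab)^{-N}$, since then $A^{n(n+1)/2}B^{n(n-1)/2} = a^{n(n+1)/2}b^{n(n-1)/2}(ab)^{nN}$. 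Convergence is preserved because $AB = ab$. (This is a standard entry in \cite{Adiga3}.)

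Next I would specialize $N=m$, $a=q^{-r}$, $b=q^{s}$, so that $ab=q^{s-r}$ with $|q^{s-r}|<1$ since $0\le r<s$. The transformed arguments become $a(ab)^{m}=q^{-r+m(s-r)}=q^{l}$ and $b(ab)^{-m}=q^{s-m(s-r)}=q^{k}$, matching the definitions of $l$ and $k$. The prefactor is $q^{E}$ with $E=-r\tfrac{m(m+1)}{2}+s\tfrac{m(m-1)}{2}$, and a short rearrangement gives $E=\tfrac{m^{2}(s-r)}{2}-\tfrac{m(r+s)}{2}$, which coincides with $-h=-mr+\tfrac{m(m-1)(s-r)}{2}$. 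This yields the first identity. For the second, I would run the same specialization with $a=-q^{-r}$, $b=-q^{s}$; the signs cancel in $ab=q^{s-r}$, the arguments become $-q^{l}$ and $-q^{k}$, and the only new ingredient is the sign of the prefactor, $(-1)^{m(m+1)/2}(-1)^{m(m-1)/2}=(-1)^{m^{2}}=(-1)^{m}$, producing the factor $(-1)^{m}$.

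Finally, although the two displayed equalities hold for every integer $N$, the specific choice $m=[s/(s-r)]$ is what normalizes the output. From $m\le s/(s-r)<m+1$, i.e. $m(s-r)\le s<(m+1)(s-r)$, one reads off $0\le k=s-m(s-r)<s-r$ and $0\le l=m(s-r)-r\le s-r$, so that $f(q^{l},q^{k})$ and $f(-q^{l},-q^{k})$ are genuine theta functions with non-negative exponents (which is precisely why the lemma is useful later). I expect no real obstacle: the whole argument is one index shift followed by exponent accounting. The only points demanding care are verifying the exponent identity $E=-h$ and checking that the floor choice of $m$ forces $l,k\ge 0$; both are routine once the shift formula is in place.
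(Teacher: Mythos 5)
Your proof is correct, and it is the standard argument: the paper itself gives no proof of Lemma \ref{l3} (it defers to \cite[Lemma 1]{Adiga}), and that cited proof rests on exactly the same quasi-periodicity $f(a,b)=a^{N(N+1)/2}b^{N(N-1)/2}f\bigl(a(ab)^N,b(ab)^{-N}\bigr)$ obtained by shifting the summation index, specialized at $N=m=[s/(s-r)]$. Your exponent check $E=\tfrac{m^2(s-r)}{2}-\tfrac{m(r+s)}{2}=-h$ and the sign computation $(-1)^{m^2}=(-1)^m$ are both right, so nothing is missing.
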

For a proof of Lemma \ref{l3}, see~\cite[Lemma 1]{Adiga}. \\
\begin{lemma} \cite[Entry 30 (i)]{Adiga3}
	We have
	\begin{align}  \label{2.10P}
	f(a, ab^2) f(b, a^2b) =& f(a, b) \psi(ab).
\end{align}
\end{lemma}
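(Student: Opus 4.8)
The plan is to prove \eqref{2.10P} by expanding both theta functions on the left-hand side into their defining double series and then matching the resulting monomials, term by term, with those produced by $f(a,b)\psi(ab)$ through an explicit reindexing. First I would use the definition \eqref{2.1} to record the two single sums
\begin{align*}
f(a,ab^2)&=\sum_{m=-\infty}^{\infty}a^{m^2}b^{m^2-m},\\
f(b,a^2b)&=\sum_{n=-\infty}^{\infty}a^{n^2-n}b^{n^2},
\end{align*}
so that the left-hand side of \eqref{2.10P} becomes the double sum $\sum_{m,n\in\mathbb{Z}}a^{m^2+n^2-n}\,b^{m^2+n^2-m}$ over $(m,n)\in\mathbb{Z}^2$.

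Next I would compute the exponents coming from the right-hand side. Writing $f(a,b)=\sum_{k}a^{k(k+1)/2}b^{k(k-1)/2}$ and $\psi(ab)=\sum_{j\ge0}a^{j(j+1)/2}b^{j(j+1)/2}$, a term indexed by $(k,j)\in\mathbb{Z}\times\mathbb{N}_0$ contributes $a$ to the power $\tfrac{k(k+1)}{2}+\tfrac{j(j+1)}{2}$ and $b$ to the power $\tfrac{k(k-1)}{2}+\tfrac{j(j+1)}{2}$. Comparing the two descriptions, I observe that the difference (exponent of $a$) $-$ (exponent of $b$) equals $m-n$ on the left and $k$ on the right, which suggests the substitution $k=m-n$. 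Forcing the two $a$-exponents to agree then reduces, after simplification, to the single condition $j(j+1)=s(s-1)$ with $s:=m+n$; this is solved in $\mathbb{N}_0$ by $j=s-1$ when $s\ge1$ and by $j=-s$ when $s\le0$.

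The key step is therefore to verify that the assignment $(m,n)\mapsto(k,j)$ just described is a bijection from $\mathbb{Z}^2$ onto $\mathbb{Z}\times\mathbb{N}_0$. I would argue this by fixing $k$: the pairs $(m,n)$ with $m-n=k$ are parametrised by $s=m+n$ running through the residue class $k+2\mathbb{Z}$, and the piecewise map $s\mapsto j$ sends this class bijectively onto $\mathbb{N}_0$ (for instance, when $k$ is even the values $s=0,2,-2,4,-4,\dots$ are sent to $j=0,1,2,3,4,\dots$, and similarly for $k$ odd). Since $k=m-n$ and $s=m+n$ together determine $(m,n)$, this produces the required bijection, and because the monomials were arranged to coincide, summing over the two index sets yields \eqref{2.10P}.

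I expect the main obstacle to be bookkeeping rather than conceptual: for each fixed $k$ one must confirm that the case split $s\ge1$ versus $s\le0$ sends the allowed residue class $k+2\mathbb{Z}$ bijectively onto $\mathbb{N}_0$, with the two pieces fitting together without overlap, and that the $b$-exponents then match automatically once the $a$-exponents and the exponent differences do. As an alternative, one could instead apply the Jacobi triple product identity to each of the four theta functions, reduce everything to products of the form $(\,\cdot\,;a^2b^2)_\infty$ and $(\,\cdot\,;ab)_\infty$, and verify the identity by rearranging infinite products; the series-bijection route, however, keeps the computation the most transparent.
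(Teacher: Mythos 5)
Your argument is correct: the exponent bookkeeping checks out (the product of the two series is $\sum_{m,n} a^{m^2+n^2-n}b^{m^2+n^2-m}$, the difference of exponents forces $k=m-n$, matching the $a$-exponents reduces to $j(j+1)=s(s-1)$ with $s=m+n$, and since $j=s-1$ and $j=-s$ are the only roots of that quadratic, exactly one lies in $\mathbb{N}_0$ for each $s$, which gives the claimed bijection $\mathbb{Z}^2\to\mathbb{Z}\times\mathbb{N}_0$ along each residue class $s\equiv k\pmod 2$); the $b$-exponents then match for free, and absolute convergence for $|ab|<1$ justifies the rearrangement. Note, however, that the paper does not prove this lemma at all — it is quoted verbatim as Entry 30(i) from the cited source [Adiga3], where it is deduced from the addition-type formulas for $f(a,b)$ (Entries 29/31) rather than by a direct series bijection. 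So your proof is a genuinely different, self-contained and more elementary route: it costs some index bookkeeping but avoids invoking any prior theta-function machinery, whereas the textbook derivation is shorter once those entries are available. Your alternative suggestion via the Jacobi triple product would also work but, as you say, is less transparent here.
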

The following identity is an easy consequence of \eqref{RamIden} when $n=2$:
\begin{equation}\label{P2.10}
    f(a,b)=f(a^3b,ab^3) + a f(b/a,a^5b^3).
\end{equation}
Using \eqref{P2.10}, we obtain
\begin{align}\label{varphi=}
  \varphi(q)&=\varphi \left(q^4 \right)+2\,q\,\psi \left(q^8\right),\\
  \psi(q)&=f\left(q^{10},q^{6}\right)+qf\left(q^{14},q^2\right),\nonumber\\
  X(q)&=f\left(q^7,q^5\right)+qf\left(q^{11},q\right).\label{X(q)=}
\end{align}

\section{Proof of Theorem \ref{MainThm3} and some corollaries}\label{sec.Main}

In this section, we prove Theorem \ref{MainThm3}, in which we represent the product of three theta functions as a linear combination of other products of three theta functions  and we deduce some corollaries.

\begin{proof}[Proof of Theorem \ref{MainThm3}]  From the statement of the theorem, we have two cases.\\

\textit{First Case}. When $\gcd (2k,r)=1 $ and $\gcd(2k,k-r)=1$.\\
Using (\ref{2.1}), we have
\begin{align}\label{3.a1}
  f \left( \epsilon _{{1}}{q}^{g},\epsilon _{{1}}{q}^{h} \right) &f
 \left( \epsilon _{{2}}{q}^{u},\epsilon _{{2}}{q}^{v} \right) f
 \left( \epsilon _{{3}}{q}^{i},\epsilon _{{3}}{q}^{j} \right)\nonumber \\ & =\sum _{
l,m,n=-\infty }^{\infty }{\epsilon _{{1}}}^{l}{\epsilon _{{2}}}^{m}{
\epsilon _{{3}}}^{n}{q}^{(S_{{1}}{l}^{2}+S_{{2}}{m}^{2}+S_{{3}}{n}^{2}+D_{{1}}l+D_{{2}}m+D_{{3}}n
)/2}.
\end{align}
 For convenience, we define
\begin{equation}\label{3.1aa}
 F(l,m,n):= {\epsilon _{{1}}}^{l}{\epsilon _{{2}}}^{m}{
\epsilon _{{3}}}^{n}{q}^{(S_{{1}}{l}^{2}+S_{{2}}{m}^{2}+S_{{3}}{n}^{2}+D_{{1}}l+D_{{2}}m+D_{{3}}n
)/2}.
\end{equation}
Using \eqref{3.1aa}, we may rewrite \eqref{3.a1} in the form
\begin{equation}\label{3.1aa2}
  f \left( \epsilon _{{1}}{q}^{g},\epsilon _{{1}}{q}^{h} \right) f
 \left( \epsilon _{{2}}{q}^{u},\epsilon _{{2}}{q}^{v} \right) f
 \left( \epsilon _{{3}}{q}^{i},\epsilon _{{3}}{q}^{j} \right) =\sum _{
l,m,n=-\infty }^{\infty }F(l,m,n).
\end{equation}
In this representation, we make the change of indices by setting
\begin{equation}\label{3.a2}
  \left( k-r \right) l+ \left( k-r \right) m+2\,n=2k\,L+a , ~~~  l-m=2\,M+b,~~~
  rl+rm-2\,n=2k\,N+c,
\end{equation}
where $a$ and $c$ have values selected from the complete set of
residues modulo $2\,k$. Here, we take $a\in \{0, \pm1,\pm2,\ldots,\pm(k-1),k \}$ and $c\in \{0, \pm1,\pm2,\ldots,\pm(k-1),-k \}$. The values of $b$ are  selected from the set $\{0,1\}$.
Now, solving the simultaneous
system \eqref{3.a2}, we have

\begin{align} \label{3.a3}
 l=&L+M+N+{\frac {a+kb+c}{2k}}, \nonumber \\
 m=&L-M+N+{\frac {a-kb+c}{2k}},\\
 n=&rL- \left( k-r \right) N+{\frac {ra- \left( k-r \right) c}{2k}}.\nonumber
\end{align}
Since $l,m,n,L,M$ and $N$ are all integers, we see that values of $a$, $b$ and $c$ are associated as follows:
\begin{equation}\label{3.a4}
\left\{
  \begin{array}{ll}
   b=0 ~~~\text{and}~~~ c=-a , & \hbox{if $a$ is  even;} \\
    b=1~~~ \text{and}~~~ c= (\text{sign} \, a)\,k-a, & \hbox{if $a$ is  odd.}
  \end{array}
\right.\end{equation}
Since $\gcd (2k,r)=1 $ and $\gcd(2k,k-r)=1$, so we must have $\gcd (k-r,r)=1$.
Thus, there is one-to-one correspondence between the set of all triples of integers $(l, m, n)$, $-\infty <l,m,n<\infty,$ and all
sets of integers $(L,M,N, a)$, $-\infty< L, M, N < \infty$, $-(k-1)\leq a\leq k$.\\
Now, from \eqref{3.a4} we have  the following  cases:\\

\textit{Case} $1.$ If $a$ is an even number, we put $a=2 \alpha$,   $b=0$  and $c=- 2 \alpha$ in the system  \eqref{3.a3} to obtain
\begin{equation} \label{3.a5}
 l=L+M+N,  \qquad
 m=L-M+N,\qquad
 n=rL- \left( k-r \right) N+\alpha.
\end{equation}

\textit{Case} $2.$ If $a$ is an odd number, we put $a=2 \alpha - 1$,   $b=1$  and $c= (\text{sign} \, a)\,k- 2\alpha + 1$ in the system  \eqref{3.a3} to obtain
\begin{align} \label{3.a6}
 l=&L+M+N+{\frac {1+(\text{sign} \, a)}{2}}, \nonumber \\
 m=&L-M+N+{\frac {-1+(\text{sign} \, a)}{2}},\\
 n=&rL- \left( k-r \right) N+{\frac {-(\text{sign} \, a)k+(\text{sign} \, a)r+2 \alpha -1 }{2}}.\nonumber
\end{align}
 One can easily observe that $k$ is  even  and $r$ is  odd, so that the quotient  $(-(\text{sign} \, a)k+(\text{sign} \, a)r+2 \alpha -1 )/2$ is integer. This case can be separated into  two parts  according to sign of $a$. From \eqref{3.1aa2},  \eqref{3.a5} and \eqref{3.a6}, we deduce
{\small
\begin{align}\label{3.a7}
   f& \left( \epsilon _{{1}}{q}^{g},\epsilon _{{1}}{q}^{h} \right) f
 \left( \epsilon _{{2}}{q}^{u},\epsilon _{{2}}{q}^{v} \right) f
 \left( \epsilon _{{3}}{q}^{i},\epsilon _{{3}}{q}^{j} \right) \nonumber \\ =& \sum_{\alpha=-{\frac{k-2}{2}}}^{\frac{k}{2}} \sum _{
L,M,N=-\infty }^{\infty } F \left(L+M+N,  L-M+N, rL- \left( k-r \right) N+\alpha \right)
\nonumber \\ & + \sum_{\alpha=1}^{\frac{k}{2}} \sum _{L,M,N=-\infty }^{\infty } F\left(L+M+N+1,  L-M+N, rL- \left( k-r \right) N+\frac{-k+r+2\,\alpha-1}{2} \right)
\nonumber \\ &+  \sum_{\alpha=1}^{\frac{k}{2}} \sum _{L,M,N=-\infty }^{\infty } F\left(L+M+N,  L-M+N-1, rL- \left( k-r \right) N+\frac{k-r-2\,\alpha+1}{2} \right),
\end{align}}
where we changed $a$ to $-a$ (or $2\alpha-1$ to $-2\alpha+1$) in the summand of the third summation above. We observe that the number of terms in the right-hand side of \eqref{3.a7} are $2k$.\\
Since  $2S_1=r(k-r)S_3$ and $\epsilon_i=(-1)^{\frac{1-\epsilon_i}{2}},$ for
$i=1,2,3$, the above identity  can be written as

\begin{align*}
  f& \left( \epsilon _{{1}}{q}^{g},\epsilon _{{1}}{q}^{h} \right) f
 \left( \epsilon _{{2}}{q}^{u},\epsilon _{{2}}{q}^{v} \right) f
 \left( \epsilon _{{3}}{q}^{i},\epsilon _{{3}}{q}^{j} \right) = \sum_{\alpha=-{\frac{k-2}{2}}}^{\frac{k}{2}}\left( -1 \right) ^{\alpha\,\delta_{{3}}}{q}^{({\alpha}^{2}S_{{3}}+
\alpha\,D_{{3}})/2}  \end{align*}
 \begin{align*} \times & \sum _{L=-\infty }^{\infty } \left( -1 \right) ^{ \left(
\delta_{{1}}+\delta_{{2}}+r\delta_{{3}} \right) L}{q}^{ \left(krS_{{3}}{L}^{2}+ \left( 2\,
r\alpha\,S_{{3}}+rD_{{3}}+D_{{1}}+D_{{2}} \right) L \right)/2}
 \\ \times & \sum _{M=-\infty }^{\infty } \left( -1 \right) ^{ \left( \delta_{{1}}-
\delta_{{2}} \right) M}{q}^{\left(r \left( k-r \right) S_{{3}}{M}^{2}+
 \left( D_{{1}}-D_{{2}} \right) M \right)/2}\\\times &  \sum _{N=-\infty }^{\infty } \left( -1 \right) ^{ \left( \delta_{{1}}+
\delta_{{2}}- \left( k-r \right) \delta_{{3}} \right) N}{q}^{ \left(k \left(
k-r \right) S_{{3}}{N}^{2}+ \left( D_{{1}}+D_{{2}}-2\,\alpha\,S_{{3}}
 \left( k-r \right) -D_{{3}} \left( k-r \right)  \right) N \right)/2}\\
 +& \left( -1 \right) ^{\delta_{{1}}+\delta_{{3}} \left( -k+r-1 \right)/2 }\sum_{\alpha=1}^{\frac{k}{2}} \left( -1 \right) ^{\alpha\,\delta_{{3}}}{q}^{\left(S_{{1}}+D_{{1}}+S_{{3}}
 \left( -k+r+2\,\alpha-1 \right) ^{2}/4+D_{{3}} \left( -k+r+2\,\alpha-1
 \right)/2 \right)/2 }
 \\ \times & \sum _{L=-\infty }^{\infty } \left( -1 \right) ^{ \left(
\delta_{{1}}+\delta_{{2}} +r\delta_{{3}} \right) L}{q}^{\left( krS_{{3}}{L}^{2}+ \left( 2\,
S_{{1}}+D_{{1}}+D_{{2}}+S_{{3}}r \left( -k+r+2\,\alpha-1 \right) +rD_{
{3}} \right) L\right)/2}
\\ \times & \sum _{M=-\infty }^{\infty } \left( -1 \right) ^{ \left( \delta_{{1}}-
\delta_{{2}} \right) M}{q}^{\left(r \left( k-r \right) S_{{3}}{M}^{2}+
 \left( 2\,S_{{1}}+D_{{1}}-D_{{2}} \right) M\right)/2}
\\ \times & \sum _{N=-\infty }^{\infty } \left( -1 \right) ^{ \left( \delta_{{1}}+
\delta_{{2}}- \left( k-r \right) \delta_{{3}} \right) N}{q}^{\left(k \left(
k-r \right) S_{{3}}{N}^{2}+ \left( 2\,S_{{1}}+D_{{1}}+D_{{2}}-S_{{3}}
 \left( k-r \right)  \left( -k+r+2\,\alpha-1 \right) -D_{{3}} \left( k
-r \right)  \right) N\right)/2}
\\+& \left( -1 \right) ^{\delta_{{2}}+\delta_{{3}} \left( k-r+1 \right)/2 }\sum_{\alpha=1}^{\frac{k}{2}} \left( -1 \right) ^{\alpha\,\delta_{{3}}}{q}^{\left(S_{{2}}-D_{{2}}+S_{{3}}
 \left( k-r-2\,\alpha+1 \right) ^{2}/4+D_{{3}} \left( k-r-2\,\alpha+1
 \right)/2 \right)/2 }
\\ \times & \sum _{L=-\infty }^{\infty } \left( -1 \right) ^{ \left(
\delta_{{1}}+\delta_{{2}}+r\delta_{{3}} \right) L}{q}^{\left(krS_{{3}}{L}^{2}+ \left( D_{
{1}}-2\,S_{{2}}+D_{{2}}+S_{{3}}r \left( k-r-2\,\alpha+1 \right) +rD_{{
3}} \right) L\right)/2}
\\ \times &\sum _{M=-\infty }^{\infty } \left( -1 \right) ^{ \left( \delta_{{1}}-
\delta_{{2}} \right) M}{q}^{\left(r \left( k-r \right) S_{{3}}{M}^{2}+
 \left( D_{{1}}+2\,S_{{2}}-D_{{2}} \right) M\right)/2}
\\ \times & \sum _{N=-\infty }^{\infty } \left( -1 \right) ^{ \left( \delta_{{1}}+
\delta_{{2}}- \left( k-r \right) \delta_{{3}} \right) N}{q}^{\left(k \left(
k-r \right) S_{{3}}{N}^{2}+ \left( D_{{1}}-2\,S_{{2}}+D_{{2}}-S_{{3}}
 \left( k-r \right)  \left( k-r-2\,\alpha+1 \right) -D_{{3}} \left( k-
r \right)  \right) N \right)/2}.
\end{align*}
Using  \eqref{2.1} and \eqref{2.1a},  after some simplifications, we obtain
{\small
\begin{align}\label{3.1}
f& \left( \epsilon _{{1}}{q}^{g},\epsilon _{{1}}{q}^{h} \right) f
 \left( \epsilon _{{2}}{q}^{u},\epsilon _{{2}}{q}^{v} \right) f
 \left( \epsilon _{{3}}{q}^{i},\epsilon _{{3}}{q}^{j} \right)
\nonumber \\ =& f_{{\delta_{{1}}+\delta_{{2}}}} \left( {q}^{(r \left( k-r \right)
S_{{3}}+D_{{1}}-D_{{2}})/2},{q}^{(r \left( k-r \right) S_{{
3}}-D_{{1}}+D_{{2}})/2} \right)
 \sum_{\alpha=-{\frac{k-2}{2}}}^{\frac{k}{2}}(-1)^{\alpha \delta_3} q^{\alpha \left({\alpha}S_{{3}}+D_{{3}} \right)/2} \nonumber \\ &\times f_{{\delta_{{1}}+\delta_{{2}}+r\delta_{{3}}}} \left( {q}^{\left(r
 \left( S_{{3}} \left( k+2\, \alpha \right) +D_{{3}} \right) +D_{{1}}
+D_{{2}} \right)/2},{q}^{ \left(r \left( S_{{3}} \left( k-2\, \alpha \right) -D_{
{3}} \right) -D_{{1}}-D_{{2}}\right)/2} \right) \nonumber \\ & \times f_{{\delta_{{1}}+\delta_{{2}}+ \left( k-r \right) \delta_{{3}}}}
 \left( {q}^{\left( \left( k-r \right)  \left( S_{{3}} \left( k-2\,
\alpha \right) -D_{{3}} \right) +D_{{1}}+D_{{2}}\right)/2},{q}^{\left(
 \left( k-r \right)  \left( S_{{3}} \left( k+2\,\alpha \right) +D_{{3}
} \right) -D_{{1}}-D_{{2}}\right)/2} \right) \nonumber\end{align}
 \begin{align}  & + \left( -1 \right) ^{\delta_{{3}} \left( k-r+1 \right)/2 } f_{{\delta_{{1}}+\delta_{{2}}}} \left( {q}^{ \left(2\,r \left( k-r \right) S_
{{3}}+D_{{1}}-D_{{2}}\right)/2},{q}^{\left(-D_{{1}}+D_{{2}}\right)/2} \right)
\nonumber \\ & \times \Bigg\{ \sum_{\alpha=1}^{\frac{k}{2}} (-1)^{\alpha \delta_3+ {\delta_1}} q^{ \left(S_{{1}}+D_{{1}}+\frac{1}{4}\,S_{{3}} \left( -k+r+2\,\alpha-1 \right) ^{2}+\frac{1}{2}
\,D_{{3}} \left( -k+r+2\,\alpha-1 \right)
 \right)/2 }  \nonumber \\ & \times  f_{{\delta_{{1}}+\delta_{{2}}+r\delta_{{3}}}} \left( {q}^{\left(r
 \left( S_{{3}} \left( k+2\,\alpha-1 \right) +D_{{3}} \right) +D_
{{1}}+D_{{2}}\right)/2},{q}^{\left(r \left( S_{{3}} \left( k-2\,\alpha+1
 \right) -D_{{3}} \right) -D_{{1}}-D_{{2}}\right)/2} \right)
 \nonumber \\ & \times  f_{{\delta_{{1}}+\delta_{{2}}+ \left( k-r \right) \delta_{{3}}}}
 \left( {q}^{\left( \left( k-r \right) \left( S_{{3}} \left( 2\,k-2\,\alpha+1 \right) -D_{
{3}} \right)   +D_{{1}}+D_{{2}}\right)/2},{q}^{\left( \left( k-r \right) \left( S_{{3}} \left(2\, \alpha-1 \right) +D_{{3}} \right)  -D_{{1}}-D_{{2}}\right)/2} \right) \nonumber \\ & +
\sum_{\alpha=1}^{\frac{k}{2}} (-1)^{\alpha \delta_3+ {\delta_2}} q^{ \left(S_{{2}}-D_{{2}}+\frac{1}{4}\,S_{{3}} \left( k-r-2\,\alpha+1 \right) ^{2}+\frac{1}{2}
\,D_{{3}} \left( k-r-2\,\alpha+1 \right)
 \right)/2 }
  \nonumber \\ & \times  f_{{\delta_{{1}}+\delta_{{2}}+r\delta_{{3}}}} \left( {q}^{(r
 \left( S_{{3}} \left( k-2\,\alpha+1 \right) +D_{{3}} \right) +D_
{{1}}+D_{{2}})/2},{q}^{(r \left( S_{{3}} \left( k+2\,\alpha-1
 \right) -D_{{3}} \right) -D_{{1}}-D_{{2}})/2} \right)  \nonumber \\ & \times  f_{{\delta_{{1}}+\delta_{{2}}+ \left( k-r \right) \delta_{{3}}}}
 \left( {q}^{( \left( k-r \right)  \left( S_{{3}} \left( 2\,\alpha
-1 \right) -D_{{3}} \right) +D_{{1}}+D_{{2}})/2},{q}^{(
 \left( k-r \right)  \left( S_{{3}} \left( 2\,k-2\,\alpha+1 \right) +D
_{{3}} \right) -D_{{1}}-D_{{2}})/2} \right)
   \Bigg\} .
\end{align}}
\textit{Second Case}. When $\gcd (2k,r)=2 $ and $\gcd(2k,k-r)=1$.\\
In \eqref{3.a1}, we make the change of indices by setting
\begin{equation}\label{3.b2}
  \left( k-r \right) l+ \left( k-r \right) m+2\,n=2k\,L+a , ~~~  l-m=2\,M+b,~~~
  \frac{r}{2}\,l+\frac{r}{2}\,m-n=k\,N+c,
\end{equation}
where $a$  have values selected from the complete set of
residues modulo $2\,k$ and $c$  have values selected from the complete set of
residues modulo $k$. Here, we take $a\in \{0, \pm1,\pm2,\ldots,\pm(k-1),k \}$ and $c\in \{0, \pm1,\pm2,\ldots, \pm\frac{k-1}{2} \}$. The values of $b$ are  selected from the set $\{0,1\}$. Now, solving the simultaneous
system \eqref{3.b2}, we have
\begin{align*}
 l=&L+M+N+{\frac {a+kb+2c}{2k}}, \nonumber \\
 m=&L-M+N+{\frac {a-kb+2c}{2k}},\\
 n=&rL- \left( k-r \right) N+{\frac {ra-2 \left( k-r \right) c}{2k}}.\nonumber
\end{align*}
Since $l,m,n,L,M$ and $N$ are all integers, we see that values of $a$, $b$ and $c$ are associated as follows:
\begin{equation*}
\left\{
  \begin{array}{ll}
   b=0 ~~~\text{and}~~~ c=-\frac{a}{2} , & \hbox{if $a$ is an even;} \\
    b=1~~~ \text{and}~~~ c= \frac{(\text{sign} \, a)\,k-a}{2}, & \hbox{if $a$ is an odd.}
  \end{array}
\right.\end{equation*}
With observing that $ \gcd (k-r,r)=1 $,  $k$ is odd and $r$ is even, thus by arguments similar to those used in the  proof of \textit{First Case}, we obtain
{\small
\begin{align}\label{3.2}
&f \left( \epsilon _{{1}}{q}^{g},\epsilon _{{1}}{q}^{h} \right) f
 \left( \epsilon _{{2}}{q}^{u},\epsilon _{{2}}{q}^{v} \right) f
 \left( \epsilon _{{3}}{q}^{i},\epsilon _{{3}}{q}^{j} \right)\nonumber\\
 = &f_{{\delta_{{1}}+\delta_{{2}}}} \left( {q}^{(r \left( k-r \right)
S_{{3}}+D_{{1}}-D_{{2}})/2},{q}^{(r \left( k-r \right) S_{{
3}}-D_{{1}}+D_{{2}})/2} \right)
 \sum_{\alpha=-{\frac{k-1}{2}}}^{\frac{k-1}{2}}(-1)^{\alpha \delta_3} q^{\alpha \left({\alpha}S_{{3}}+D_{{3}} \right)/2} \nonumber \end{align}
 \begin{align} & \times f_{{\delta_{{1}}+\delta_{{2}}+r\delta_{{3}}}} \left( {q}^{\left(r
 \left( S_{{3}} \left( k+2 \, \alpha \right) +D_{{3}} \right) +D_{{1}}
+D_{{2}} \right)/2},{q}^{ \left(r \left( S_{{3}} \left( k- 2\, \alpha \right) -D_{
{3}} \right) -D_{{1}}-D_{{2}}\right)/2} \right) \nonumber \\ & \times f_{{\delta_{{1}}+\delta_{{2}}+ \left( k-r \right) \delta_{{3}}}}
 \left( {q}^{\left( \left( k-r \right)  \left( S_{{3}} \left( k-2\,
\alpha \right) -D_{{3}} \right) +D_{{1}}+D_{{2}}\right)/2},{q}^{\left(
 \left( k-r \right)  \left( S_{{3}} \left( k+2\,\alpha \right) +D_{{3}
} \right) -D_{{1}}-D_{{2}}\right)/2} \right) \nonumber \\ & + \left( -1 \right) ^{\delta_{{3}} \left( k-r+1 \right)/2 } f_{{\delta_{{1}}+\delta_{{2}}}} \left( {q}^{ \left(2\,r \left( k-r \right) S_
{{3}}+D_{{1}}-D_{{2}}\right)/2},{q}^{\left(-D_{{1}}+D_{{2}}\right)/2} \right)
\nonumber \\ & \times \Bigg\{ \sum_{\alpha=1}^{\frac{k+1}{2}} (-1)^{\alpha \delta_3+ {\delta_1}} q^{ \left(S_{{1}}+D_{{1}}+\frac{1}{4}\,S_{{3}} \left( -k+r+2\,\alpha-1 \right) ^{2}+\frac{1}{2}
\,D_{{3}} \left( -k+r+2\,\alpha-1 \right)
 \right)/2 }  \nonumber \\ & \times  f_{{\delta_{{1}}+\delta_{{2}}+r\delta_{{3}}}} \left( {q}^{\left(r
 \left( S_{{3}} \left( k+2\,\alpha-1 \right) +D_{{3}} \right) +D_
{{1}}+D_{{2}}\right)/2},{q}^{\left(r \left( S_{{3}} \left( k-2\,\alpha+1
 \right) -D_{{3}} \right) -D_{{1}}-D_{{2}}\right)/2} \right)
 \nonumber \\ & \times  f_{{\delta_{{1}}+\delta_{{2}}+ \left( k-r \right) \delta_{{3}}}}
 \left( {q}^{\left( \left( k-r \right) \left( S_{{3}} \left( 2\,k-2\,\alpha+1 \right) -D_{
{3}} \right)   +D_{{1}}+D_{{2}}\right)/2},{q}^{\left( \left( k-r \right) \left( S_{{3}} \left(2\, \alpha-1 \right) +D_{{3}} \right)   -D_{{1}}-D_{{2}}\right)/2} \right) \nonumber \\ & +
\sum_{\alpha=1}^{\frac{k-1}{2}} (-1)^{\alpha \delta_3+ {\delta_2}} q^{ \left(S_{{2}}-D_{{2}}+\frac{1}{4}\,S_{{3}} \left( k-r-2\,\alpha+1 \right) ^{2}+\frac{1}{2}
\,D_{{3}} \left( k-r-2\,\alpha+1 \right)
 \right)/2 }
  \nonumber \\ & \times  f_{{\delta_{{1}}+\delta_{{2}}+r\delta_{{3}}}} \left( {q}^{(r
 \left( S_{{3}} \left( k-2\,\alpha+1 \right) +D_{{3}} \right) +D_
{{1}}+D_{{2}})/2},{q}^{(r \left( S_{{3}} \left( k+2\,\alpha-1
 \right) -D_{{3}} \right) -D_{{1}}-D_{{2}})/2} \right)  \nonumber \\ & \times  f_{{\delta_{{1}}+\delta_{{2}}+ \left( k-r \right) \delta_{{3}}}}
 \left( {q}^{( \left( k-r \right)  \left( S_{{3}} \left( 2\,\alpha
-1 \right) -D_{{3}} \right) +D_{{1}}+D_{{2}})/2},{q}^{(
 \left( k-r \right)  \left( S_{{3}} \left( 2\,k-2\,\alpha+1 \right) +D
_{{3}} \right) -D_{{1}}-D_{{2}})/2} \right)
   \Bigg\} .
\end{align}}
Since $k-r$ is odd in both cases, so $\delta_1+\delta_2+ (k-r) \delta_3 \equiv \delta_1+\delta_2+  \delta_3 \pmod{2}$. Thus
combining identity \eqref{3.1} with \eqref{3.2}  together, we deduce \eqref{3.3}. This completes the proof of the theorem.
\end{proof}
\begin{corollary}\label{cor1}
Let $k$ and $r$ be positive integers with $k>r$, $\gcd (2k,r)=1 \, \text{or} \, 2 $ and $\gcd(2k,k-r)=1$, we have
\begin{align*}
  8 \psi^{2} & \left( q^{r(k-r)} \right)\psi(q^2)=\varphi \left( q^{r(k-r)} \right) \sum_{\alpha={\left[\frac{2 - k}{2}\right]}}^{\left[\frac{k}{2}\right]} q^{\alpha (\alpha+1)} f \left(  q^{r(2\,k-r+ 2\alpha + 1)} , q^{r(r- 2\alpha - 1)}   \right)\\
& \times f \left(  q^{(k-r)(k+r- 2\,\alpha - 1)} , q^{(k-r)(k-r+ 2\alpha + 1)}   \right)
+ 2 \psi \left( q^{2r(k-r)} \right)\\& \Bigg\{  \sum_{\alpha=1}^{\left[\frac{k+1}{2}\right]} q^{r(k-r)+\frac{1}{4}\left( -k+r+2\alpha-1 \right) ^{2}+\frac{1}{2}
\left( -k+r+2\alpha-1 \right)}
 f \left(  q^{r(2\,k-r+ 2\alpha )} , q^{r(r- 2\alpha )}   \right)\\ & \times
f \left(  q^{(k-r)(2k+r- 2\alpha )} , q^{(k-r)(-r+ 2\alpha )}   \right)+ \sum_{\alpha=1}^{\left[\frac{k}{2}\right]} q^{\frac{1}{4} \left( k-r-2\alpha+1 \right) ^{2}+\frac{1}{2}
\left( k-r-2\alpha+1 \right)}
\\ & \times f \left(  q^{r(2\,k - r- 2\alpha + 2 )} , q^{r(r+ 2\alpha -2 )}   \right)
f \left(  q^{(k-r)(r+ 2\alpha -2 )} , q^{(k-r)(2\,k-r- 2\alpha+2 )}   \right)\Bigg\}.
\end{align*}
\end{corollary}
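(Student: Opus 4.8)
The plan is to derive Corollary~\ref{cor1} as a pure specialization of Theorem~\ref{MainThm3}; no new analytic ingredient is needed. The strategy is to pick the six shape parameters so that all three theta factors on the left degenerate into multiples of $\psi$, and to take every $\epsilon_i=+1$ (hence every $\delta_i=0$) so that the right-hand side of \eqref{3.3} sheds all of its alternating signs and all of the subscripts on $f_\delta$.

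Concretely, I would set $\epsilon_1=\epsilon_2=\epsilon_3=1$ and $g=r(k-r)$, $h=0$, $u=r(k-r)$, $v=0$, $i=2$, $j=0$; this gives $S_1=S_2=r(k-r)$, $D_1=D_2=r(k-r)$, $S_3=2$ and $D_3=2$. The admissibility conditions $S_1=S_2>0$, $S_3>0$ and $2S_1=r(k-r)S_3$ then hold trivially, and the arithmetic hypotheses on $k,r$ are exactly those already assumed in the corollary. Each left-hand factor becomes a theta function with a unit argument, and by \eqref{2.4} together with the definition \eqref{2.8} of $\psi$ one has $f(1,q^a)=2\psi(q^a)$; hence the left side of \eqref{3.3} collapses to $8\psi^2(q^{r(k-r)})\psi(q^2)$, which is the left side of the corollary.

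For the right side I would substitute the same data into \eqref{3.3} and simplify. Because every $\delta_i=0$, each $f_{\delta_1+\delta_2+\cdots}$ reverts to the ordinary $f$, and all the factors $(-1)^{\alpha\delta_3}$, $(-1)^{\alpha\delta_3+\delta_1}$, $(-1)^{\alpha\delta_3+\delta_2}$ and $(-1)^{\delta_3(k-r+1)/2}$ equal $1$. The two outer prefactors simplify immediately: since $D_1-D_2=0$ the first becomes $f(q^{r(k-r)},q^{r(k-r)})=\varphi(q^{r(k-r)})$, and the second becomes $f(q^{2r(k-r)},1)=2\psi(q^{2r(k-r)})$. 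What remains is the mechanical insertion of $S_3=2$, $D_3=2$ and $D_1+D_2=2r(k-r)$ into each exponent of the three $\alpha$-sums; for instance the first theta of the main sum acquires exponents $(r(S_3(k+2\alpha)+D_3)+D_1+D_2)/2=r(2k-r+2\alpha+1)$ and $(r(S_3(k-2\alpha)-D_3)-D_1-D_2)/2=r(r-2\alpha-1)$, and one checks the analogous simplifications term by term against the stated identity.

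The one point that genuinely needs care --- and the step I would flag --- is the ordering of the two arguments in each theta factor. By the symmetry $f(a,b)=f(b,a)$ one may realise $2\psi(q^a)$ either as $f(1,q^a)$, which forces $D_i=-S_i$, or as $f(q^a,1)$, which forces $D_i=+S_i$; only the second convention, giving $D_1=D_2=r(k-r)$ and $D_3=2$, reproduces the exponents in the printed form of Corollary~\ref{cor1}. The opposite choice yields an identity equivalent under transposing the two arguments of every theta function, so matching the displayed shape requires fixing $D_i>0$ from the start. With that convention in place, the verification of all three sums is a finite and routine computation, completing the proof.
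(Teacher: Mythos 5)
Your proposal is correct and is exactly the paper's proof: the authors likewise set $g=u=r(k-r)$, $h=v=0$, $i=2$, $j=0$ and $\epsilon_1=\epsilon_2=\epsilon_3=1$ in Theorem~\ref{MainThm3} and simplify. Your extra remark about choosing $f(q^a,1)$ rather than $f(1,q^a)$ (so that $D_1=D_2=r(k-r)$ and $D_3=2$) is a correct and worthwhile clarification of the ``some simplifications'' the paper leaves implicit.
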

\begin{proof}
Setting $g=u=r(k-r)$, $h=v=0$, $i=2$, $j=0$ and $\epsilon_1=\epsilon_2=\epsilon_3=1$ in Theorem \ref{MainThm3}, after some simplifications, we obtain the desired result.
\end{proof}

In a similar way, we obtain the following corollaries:

\begin{corollary}\label{cor2}
Under the same conditions as Corollary $\ref{cor1}$, we have
\begin{align*}
  4 \, \psi^{2} & \left( q^{r(k-r)} \right) \varphi(q) =\varphi \left( q^{r(k-r)} \right) \sum_{\alpha={\left[\frac{2 - k}{2}\right]}}^{\left[\frac{k}{2}\right]}  q^{\alpha^2} f\left(  q^{r(2\,k-r+2\,\alpha)},q^{r(r-2\,\alpha)}  \right)\\
&\times f \left(  q^{(k-r)(k+r- 2\,\alpha )} , q^{(k-r)(k-r+ 2\,\alpha )}   \right)+ 2\,  \psi \left( q^{2\,r(k-r)} \right) \Bigg\{  \sum_{\alpha=1}^{\left[\frac{k+1}{2}\right]}  q^{r(k-r)+\frac{1}{4}\, \left( -k+r+2\,\alpha-1 \right) ^{2}}\\
 & \times f \left(  q^{r(2\,k-r+ 2\,\alpha -1)} , q^{r(r- 2\,\alpha +1 )}   \right)
f \left(  q^{(k-r)(2\,k+r- 2\,\alpha +1)} , q^{(k-r)(-r+ 2\,\alpha -1)}   \right)\\
&+ \sum_{\alpha=1}^{\left[\frac{k}{2}\right]}  q^{\frac{1}{4}\, \left( k-r-2\,\alpha+1 \right) ^{2}}
f \left(  q^{r(2\,k - r- 2\,\alpha + 1 )} ,  q^{r(r+ 2\,\alpha -1 )}   \right) \times f \left(  q^{(k-r)(r+ 2\,\alpha -1 )} , q^{(k-r)(2\,k-r- 2\,\alpha+1 )}   \right)\Bigg\}.
\end{align*}
\end{corollary}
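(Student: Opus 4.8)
The plan is to obtain Corollary~\ref{cor2} as a direct specialization of Theorem~\ref{MainThm3}, running parallel to the derivation of Corollary~\ref{cor1}. The only change from the setup used for Corollary~\ref{cor1} lies in the third theta function: instead of $i=2$, $j=0$, I would take $i=j=1$, while keeping $g=u=r(k-r)$, $h=v=0$ and $\epsilon_1=\epsilon_2=\epsilon_3=1$. With this choice the derived parameters are $S_1=S_2=D_1=D_2=r(k-r)$, $S_3=2$, $D_3=0$ and $\delta_1=\delta_2=\delta_3=0$. First I would verify the hypotheses of the theorem: clearly $S_1=S_2>0$ and $2S_1=2r(k-r)=r(k-r)S_3$, so the admissibility conditions $S_1=S_2$ and $2S_1=r(k-r)S_3$ hold, while the constraints on $k$ and $r$ are inherited unchanged.

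Next I would reduce the left-hand side. With all $\epsilon_i=1$, the product on the left of \eqref{3.3} is $f(q^{r(k-r)},1)\,f(q^{r(k-r)},1)\,f(q,q)$. Applying \eqref{2.4} in the form $f(1,a)=2f(a,a^3)$ together with $\psi(q)=f(q,q^3)$ from \eqref{2.8}, each of the first two factors collapses to $2\psi(q^{r(k-r)})$, while the third factor is $\varphi(q)$ by the definition of $\varphi$. Hence the left member equals $4\psi^2(q^{r(k-r)})\varphi(q)$, exactly as stated.

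For the right-hand side, the vanishing of every $\delta_i$ does most of the work: each subscripted theta function $f_{\delta_1+\delta_2+\cdots}$ in \eqref{3.3} reduces to the ordinary $f$ via \eqref{2.1a}, and every sign factor $(-1)^{\alpha\delta_3}$, $(-1)^{\alpha\delta_3+\delta_1}$, $(-1)^{\alpha\delta_3+\delta_2}$ and $(-1)^{\delta_3(k-r+1)/2}$ becomes $1$. The leading prefactor $f_{\delta_1+\delta_2}(q^{(r(k-r)S_3+D_1-D_2)/2},q^{(r(k-r)S_3-D_1+D_2)/2})$ becomes $f(q^{r(k-r)},q^{r(k-r)})=\varphi(q^{r(k-r)})$, and the prefactor of the braced part becomes $f(q^{2r(k-r)},1)=2\psi(q^{2r(k-r)})$, again by \eqref{2.4} and \eqref{2.8}. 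The $q$-powers in the three summations collapse to $q^{\alpha^2}$, to $q^{r(k-r)+\frac14(-k+r+2\alpha-1)^2}$, and to $q^{\frac14(k-r-2\alpha+1)^2}$, since $D_3=0$ annihilates the terms linear in $D_3$ and $S_3=2$ turns $\frac14 S_3(\cdot)^2$ into $\frac12(\cdot)^2$.

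The only laborious step, and the one that demands care, is the evaluation of the six theta-function arguments in \eqref{3.3}. For each of these I would substitute $S_3=2$, $D_3=0$ and $D_1+D_2=2r(k-r)$ into the exponents, halve, and extract a factor of $r$ or $k-r$; this produces, for instance, $f(q^{r(2k-r+2\alpha)},q^{r(r-2\alpha)})$ and $f(q^{(k-r)(k+r-2\alpha)},q^{(k-r)(k-r+2\alpha)})$ in the first sum, and the corresponding arguments in the two braced sums. Matching these against the asserted identity finishes the proof. No genuine obstacle appears: the argument is a single substitution followed by elementary simplifications, and the principal effort is the careful bookkeeping of exponents so that the powers of $r$ and $k-r$ emerge in the displayed form.
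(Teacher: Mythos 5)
Your proposal is correct and matches the paper's intended derivation: the paper obtains Corollary \ref{cor2} ``in a similar way'' to Corollary \ref{cor1}, i.e.\ by specializing Theorem \ref{MainThm3} with $g=u=r(k-r)$, $h=v=0$, $\epsilon_1=\epsilon_2=\epsilon_3=1$ and the third theta function chosen so that it becomes $\varphi(q)$, exactly your choice $i=j=1$. Your parameter checks, the reduction of $f(q^{r(k-r)},1)$ to $2\psi(q^{r(k-r)})$ via \eqref{2.4}, and the exponent bookkeeping with $S_3=2$, $D_3=0$, $D_1=D_2=r(k-r)$ all verify.
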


\begin{corollary}\label{cor3}
Under the same conditions as Corollary $\ref{cor1}$, we have
\begin{align*}
\varphi^2 & \left( q^{r(k-r)} \right)\varphi(q^2)=\varphi \left( q^{2r(k-r)} \right) \sum_{\alpha={\left[\frac{2 - k}{2}\right]}}^{\left[\frac{k}{2}\right]} q^{2 \, \alpha^2} f\left(  q^{2\,r(k+2\,\alpha)},q^{2\, r(k-2\,\alpha)}  \right) f \left( q^{2\,(k-r)(k- 2\,\alpha )} , q^{2\,(k-r)(k+ 2\,\alpha )}   \right)\\& + 2\, q^{r(k-r)} \psi \left( q^{4\,r(k-r)} \right) \Bigg\{  \sum_{\alpha=1}^{\left[\frac{k+1}{2}\right]} q^{\frac{1}{2}\, \left( -k+r+2\,\alpha-1 \right) ^{2}}   f \left(  q^{2\,r(k+ 2\,\alpha -1)} , q^{2\,r(k - 2\,\alpha +1 )}   \right)\\
& \times
f \left(  q^{2\,(k-r)(2\,k- 2\,\alpha +1)} , q^{2\,(k-r)( 2\,\alpha -1)}   \right)+ \sum_{\alpha=1}^{\left[\frac{k}{2}\right]}q^{\frac{1}{2}\, \left( k-r-2\,\alpha+1 \right) ^{2}}
f \left(  q^{2\,r(k - 2\,\alpha + 1 )} ,  q^{2\,r(k+ 2\,\alpha -1 )}   \right)\\
& \times f \left(  q^{2\,(k-r)( 2\,\alpha -1 )} , q^{2\,(k-r)(2\,k- 2\,\alpha+1 )}   \right)\Bigg\}.
\end{align*}
\end{corollary}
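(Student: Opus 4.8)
The plan is to derive Corollary \ref{cor3} as a direct specialization of the master identity \eqref{3.3}, in the same spirit as the proofs of Corollaries \ref{cor1} and \ref{cor2}. To manufacture the left-hand side $\varphi^2(q^{r(k-r)})\varphi(q^2)$, I would set $g=h=u=v=r(k-r)$, $i=j=2$ and $\epsilon_1=\epsilon_2=\epsilon_3=1$ in Theorem \ref{MainThm3}; then the three theta factors on the left collapse to $f(q^{r(k-r)},q^{r(k-r)})=\varphi(q^{r(k-r)})$ twice and $f(q^2,q^2)=\varphi(q^2)$, giving exactly the desired left-hand side.

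First I would record the induced parameters and confirm the hypotheses. Here $S_1=S_2=2r(k-r)$, $S_3=4$ and $D_1=D_2=D_3=0$, so both $S_1=S_2$ and $2S_1=r(k-r)S_3$ hold and the arithmetic conditions on $k,r$ carry over untouched. Since each $\epsilon_i=1$ we have $\delta_1=\delta_2=\delta_3=0$, whence every sign factor on the right of \eqref{3.3} equals $1$, every $f_{\delta}$ collapses to the ordinary theta function $f$, and the three $\alpha$-ranges are unchanged. At this point the right-hand side is already structurally identical to the corollary, and what remains is purely computational.

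The one genuinely non-mechanical step is the second prefactor. Its second-argument exponent $(-D_1+D_2)/2$ vanishes, so it reduces to the degenerate value $f(q^{4r(k-r)},1)$. I would invoke \eqref{2.4}, namely $f(1,a)=2f(a,a^3)$, with $a=q^{4r(k-r)}$, to rewrite this as $2f(q^{4r(k-r)},q^{12r(k-r)})=2\psi(q^{4r(k-r)})$; this is the source of the factor $2$ in front of the brace. By contrast the leading prefactor evaluates directly to $f(q^{2r(k-r)},q^{2r(k-r)})=\varphi(q^{2r(k-r)})$.

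Finally, a bookkeeping point accounts for the extra $q^{r(k-r)}$ multiplying the brace. Each summand inside the brace carries an $\alpha$-independent contribution $S_1/2=r(k-r)$ (respectively $S_2/2=r(k-r)$) in its $q$-exponent; factoring the common $q^{r(k-r)}$ out of the brace reduces the two exponents to $q^{\frac12(-k+r+2\alpha-1)^2}$ and $q^{\frac12(k-r-2\alpha+1)^2}$ and combines with the $2\psi(q^{4r(k-r)})$ to produce $2q^{r(k-r)}\psi(q^{4r(k-r)})$. Under $S_3=4$ and $D_i=0$ the remaining inner theta functions simplify immediately to the displayed forms, and collecting the pieces yields the stated identity. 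The proof is thus entirely a matter of substitution and simplification, the only conceptual maneuver being the reduction of the degenerate theta $f(q^{4r(k-r)},1)$ via \eqref{2.4}.
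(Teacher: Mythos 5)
Your proposal is correct and matches the paper's intended argument: the paper proves Corollary \ref{cor3} "in a similar way" to Corollary \ref{cor1}, i.e.\ by specializing Theorem \ref{MainThm3}, and the substitution $g=h=u=v=r(k-r)$, $i=j=2$, $\epsilon_1=\epsilon_2=\epsilon_3=1$ is exactly the right one, with the degenerate prefactor handled via $f(1,a)=2f(a,a^3)$ and the common $q^{r(k-r)}$ factored out of the braced sums just as you describe.
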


\begin{corollary}\label{cor4} Under the same conditions as Corollary $\ref{cor1}$, we have

  \begin{align*}
\varphi^2 & \left( q^{r(k-r)} \right)\psi(q)=\varphi \left( q^{2r(k-r)} \right) \sum_{\alpha={\left[\frac{2 - k}{2}\right]}}^{\left[\frac{k}{2}\right]} q^{\alpha(2\, \alpha + 1)} f\left(  q^{r(2\,k+4\,\alpha+1)},q^{r(2\,k-4\,\alpha-1)}  \right) \\& \times  f \left( q^{(k-r)(2\,k- 4\,\alpha-1 )} , q^{(k-r)(2\,k+ 4\,\alpha+1 )}   \right) + 2\,q^{r(k-r)}  \psi \left( q^{4\,r(k-r)} \right)\\&\times 
 \Bigg\{  \sum_{\alpha=1}^{\left[\frac{k+1}{2}\right]} q^{\frac{1}{2}\, \left( -k+r+2\,\alpha-1 \right) ^{2}+\frac{1}{2}\, \left( -k+r+2\,\alpha-1 \right)} 
  f \left(  q^{r(2\,k+ 4\,\alpha -1)} , q^{r(2\,k - 4\,\alpha +1 )}   \right)\\&\times
 f \left(  q^{(k-r)(4\,k- 4\,\alpha +1)} , q^{(k-r)( 4\,\alpha -1)}   \right)
 + \sum_{\alpha=1}^{\left[\frac{k}{2}\right]}q^{\frac{1}{2}\, \left( k-r-2\,\alpha+1 \right) ^{2}+\frac{1}{2}\, \left( k-r-2\,\alpha+1 \right)}\\
 & \times
 f \left(  q^{r(2\,k - 4\,\alpha + 3 )} ,  q^{r(2k+ 4\,\alpha -3 )}   \right) f \left(  q^{(k-r)( 4\,\alpha -3 )} , q^{(k-r)(4\,k- 4\,\alpha+3 )}   \right)\Bigg\}.
\end{align*}

\end{corollary}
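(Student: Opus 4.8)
\section*{Proof proposal for Corollary \ref{cor4}}

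The plan is to specialize Theorem \ref{MainThm3} exactly as in the proofs of Corollaries \ref{cor1}--\ref{cor3}, choosing the six integer parameters so that the left-hand side collapses to $\varphi^2(q^{r(k-r)})\psi(q)$. Reading off the stated right-hand side, the factor $\psi(q)=f(q^3,q)$ dictates the third theta factor, so I would take $i=3$, $j=1$ (hence $S_3=i+j=4$ and $D_3=i-j=2$), while the two copies of $\varphi(q^{r(k-r)})=f(q^{r(k-r)},q^{r(k-r)})$ force $g=h=u=v=r(k-r)$ (hence $S_1=S_2=2r(k-r)$ and $D_1=D_2=0$). I take $\epsilon_1=\epsilon_2=\epsilon_3=1$, so that $\delta_1=\delta_2=\delta_3=0$.

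Before substituting, I would verify the hypotheses of Theorem \ref{MainThm3}: plainly $S_1=S_2>0$ and $S_3>0$, while the only nontrivial constraint $2S_1=r(k-r)S_3$ reads $4r(k-r)=4r(k-r)$, an identity; the conditions on $k$ and $r$ are inherited verbatim from Corollary \ref{cor1}. Because all $\delta_i=0$, every subscripted theta $f_{\delta_1+\delta_2+\cdots}$ reduces to the ordinary $f$, and every sign factor such as $(-1)^{\alpha\delta_3+\delta_1}$ or $(-1)^{\delta_3(k-r+1)/2}$ becomes $1$; this is precisely what lets the general three-term formula \eqref{3.3} shrink to the clean shape displayed in the corollary.

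Next I would evaluate the two prefactors. The leading factor $f_{\delta_1+\delta_2}(q^{(r(k-r)S_3+D_1-D_2)/2},q^{(r(k-r)S_3-D_1+D_2)/2})$ becomes $f(q^{2r(k-r)},q^{2r(k-r)})=\varphi(q^{2r(k-r)})$, and the prefactor of the braced sum, $f_{\delta_1+\delta_2}(q^{(2r(k-r)S_3+D_1-D_2)/2},q^{(-D_1+D_2)/2})$, becomes $f(q^{4r(k-r)},1)$, which by the symmetry $f(a,b)=f(b,a)$ and \eqref{2.4} equals $2\psi(q^{4r(k-r)})$. It then remains to insert $S_1=S_2=2r(k-r)$, $D_1=D_2=0$, $S_3=4$, $D_3=2$ into the three $\alpha$-sums and simplify the $q$-exponents and theta arguments; for example $\alpha(\alpha S_3+D_3)/2=\alpha(2\alpha+1)$, and a base exponent such as $(r(S_3(k+2\alpha)+D_3)+D_1+D_2)/2$ collapses to $r(2k+4\alpha+1)$, matching the first summand.

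The single point requiring care, and the only place a sign could slip, is the ordering in $\psi(q)=f(q^3,q)$ versus $f(q,q^3)$: the choice $i=3$, $j=1$ gives $D_3=+2$, which is what produces the exponent $\alpha(2\alpha+1)$ and the $+D_3$-patterns throughout the arguments, whereas the opposite ordering would flip these signs. The remaining bookkeeping step is factoring the common $q^{r(k-r)}$ out of the braced sums: in each inner exponent the $\alpha$-independent piece $(S_1+D_1)/2=(S_2-D_2)/2=r(k-r)$ pulls out and combines with $2\psi(q^{4r(k-r)})$ to give the displayed factor $2q^{r(k-r)}\psi(q^{4r(k-r)})$. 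After these substitutions every term agrees with the claimed identity, which completes the proof.
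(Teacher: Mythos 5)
Your proposal is correct and follows exactly the route the paper intends: the paper proves Corollary \ref{cor1} by specializing Theorem \ref{MainThm3} and then states that Corollaries \ref{cor2}--\ref{cor4} follow ``in a similar way,'' and your choice $g=h=u=v=r(k-r)$, $i=3$, $j=1$, $\epsilon_1=\epsilon_2=\epsilon_3=1$ is the right specialization, with the prefactor $f(q^{4r(k-r)},1)=2\psi(q^{4r(k-r)})$ via \eqref{2.4} and the extraction of $q^{r(k-r)}$ from $(S_1+D_1)/2=(S_2-D_2)/2$ handled correctly. Your remark that the ordering $i=3$, $j=1$ (so $D_3=+2$) is what yields the exponent $\alpha(2\alpha+1)$ is also the right point of care.
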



\section{Product of two  theta functions} \label{pro2thetaF}
Our main theorem can be considered as a generalisation for the product of two theta functions and one can obtain several identities, for instance
\begin{corollary} \label{clp2}  For any positive integer $m$, we have
\begin{align*}
  \varphi  \left( -{q}^{m} \right) \varphi  \left( q \right) = & \sum _{
\alpha=\left[\frac{1-m}{2} \right]}^{\left[\frac{1+m}{2} \right]}{q}^{{\alpha}^{2}}f \left( -{q}^{m
 \left( m+1+2\,\alpha \right) },-{q}^{m \left( m+1-2\,\alpha \right) }
 \right)f \left( -{q}^{m+1-2\,\alpha},-{q}^{m+1+2\,\alpha} \right),\\
\psi \left( -{q}^{m} \right) \psi \left( q \right) =& \sum _{
\alpha=\left[\frac{1-m}{2} \right]}^{\left[\frac{1+m}{2} \right]}{q}^{2\,{\alpha}^{2}+\alpha}f \left( -{q}^{2\,m
 \left( m+2\,\alpha+2 \right) },-{q}^{2\,m \left( m-2\,\alpha \right)
} \right) f \left( -{q}^{3\,m+1-4\,\alpha},-{q}^{m+3+4\,\alpha}
 \right),\\
f \left( -{q}^{m} \right) f \left( -q \right) =& \sum _{
\alpha=\left[\frac{1-m}{2} \right]}^{\left[\frac{1+m}{2} \right]} \left( -1 \right) ^{\alpha}{q}^{\alpha\, \left(
3\,\alpha+1 \right)/2 }f_{{m+1}} \left( {q}^{m \left( 3\,m+6\,
\alpha+5 \right)/2 },{q}^{m \left( 3\,m-6\,\alpha+1 \right)/2 }
 \right)\\&\times f \left( {q}^{2\,m+1-3\,\alpha},{q}^{2+m+3\,\alpha} \right),\\
2\,\varphi  \left( -{q}^{m} \right) \psi \left( {q}^{2} \right) = & \sum _{
\alpha=\left[\frac{1-m}{2} \right]}^{\left[\frac{1+m}{2} \right]}{q}^{{\alpha}^{2}+\alpha}f \left( -{
q}^{m \left( m+2\,\alpha+2 \right) },-{q}^{m \left( m-2\,\alpha
 \right) } \right) f \left( -{q}^{m-2\,\alpha},-{q}^{m+2\,\alpha+2}
 \right),\\
\varphi  \left( -{q}^{2\,m} \right) \psi \left( q \right) =& \sum _{
\alpha=\left[\frac{1-m}{2} \right]}^{\left[\frac{1+m}{2} \right]}{q}^{2\,{\alpha}^{2}+\alpha}f \left( -
{q}^{m \left( 2\,m+3+4\,\alpha \right) },-{q}^{m \left( 2\,m+1-4\,
\alpha \right) } \right) f \left( -{q}^{2\,m+1-4\,\alpha},-{q}^{2\,m+3
+4\,\alpha} \right),\\
f \left( -{q}^{2\,m} \right) \varphi  \left( {q}^{3} \right) =& \sum _{
\alpha=\left[\frac{1-m}{2} \right]}^{\left[\frac{1+m}{2} \right]}{q}^{3\,{\alpha}^{2}}f \left( -{q}^{m
 \left( 3\,m+6\,\alpha+4 \right) },-{q}^{m \left( 3\,m-6\,\alpha+2
 \right) } \right) f \left( -{q}^{4\,m+3-6\,\alpha},-{q}^{2\,m+3+6\,
\alpha} \right),\\
\psi \left( -{q}^{m} \right) \varphi  \left( {q}^{2} \right) =& \sum _{
\alpha=\left[\frac{1-m}{2} \right]}^{\left[\frac{1+m}{2} \right]}{q}^{2\,{\alpha}^{2}}f \left( -{q}^{m
 \left( 2\,m+3+4\,\alpha \right) },-{q}^{m \left( 2\,m+1-4\,\alpha
 \right) } \right)  f \left( -{q}^{3\,m+2-4\,\alpha},-{q}^{m+2+4\,
\alpha} \right),\\
2\,\psi \left( -{q}^{m} \right) \psi \left( {q}^{4} \right) =& \sum _{
\alpha=\left[\frac{1-m}{2} \right]}^{\left[\frac{1+m}{2} \right]}{q}^{2\,{\alpha}^{2}+2\,\alpha}f
 \left(- {q}^{m \left( 2\,m+4\,\alpha+5 \right) },-{q}^{m \left( 2\,m-4
\,\alpha-1 \right) } \right) f \left(- {q}^{3\,m-4\,\alpha},-{q}^{
m+4+4\,\alpha} \right).
\end{align*}
\end{corollary}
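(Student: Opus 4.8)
The plan is to obtain each of the eight identities as a specialization of Theorem~\ref{MainThm3}, exploiting the fact that its right-hand side carries, outside each sum, an $\alpha$-independent theta factor which can be arranged to become a \emph{common} factor on both sides and then cancelled. Throughout I would take $k-r=1$; for the first identity this means $r=m$, $k=m+1$, and for the remaining ones the same pair is used with a different choice of $S_3$, $D_3$ and $\epsilon_3$. I would fix $\epsilon_3$, $S_3=i+j$ and $D_3=i-j$ so that the third input $f(\epsilon_3q^i,\epsilon_3q^j)$, after reduction by \eqref{2.4} when one argument is $1$, is exactly the second factor on the left of the target identity (respectively $\varphi(q)$, $\psi(q)$, $f(-q)$, $2\psi(q^2)$, $\psi(q)$, $\varphi(q^3)$, $\varphi(q^2)$, $2\psi(q^4)$); this forces the weight $q^{\alpha(\alpha S_3+D_3)/2}$ of the surviving sum to match. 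The remaining freedom $\epsilon_1,\epsilon_2$ is used by requiring that exactly one of them be $-1$, i.e.\ $\delta_1+\delta_2$ odd, while $g,h,u,v$ are chosen with $S_1=S_2=r(k-r)S_3/2$ and $D_1=D_2$ so as to reproduce the $\alpha$-dependent exponents of the two theta factors inside the sum.

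With these choices the argument rests on two reductions. First, the entire second block of \eqref{3.3} drops out: its $\alpha$-independent prefactor is $f_{\delta_1+\delta_2}\!\left(q^{(2r(k-r)S_3+D_1-D_2)/2},q^{(D_2-D_1)/2}\right)$, and since $D_1=D_2$ its second argument is $q^{0}=1$ while $\delta_1+\delta_2$ is odd, so by \eqref{2.5} this factor is $f(-q^{\,\cdot},-1)=0$. Second, the prefactor of the surviving sum simplifies, again using $D_1=D_2$ and $2S_1=r(k-r)S_3$, to $P:=f_{\delta_1+\delta_2}\!\left(q^{\,S_1},q^{\,S_1}\right)=\varphi(-q^{\,S_1})$.

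It then remains to factor the product of the first two inputs. Because $S_1=S_2$ and $D_1=D_2$ force $g=u$, $h=v$, and because exactly one of $\epsilon_1,\epsilon_2$ is $-1$, we have $f_1f_2=f(q^g,q^h)f(-q^g,-q^h)$. Applying Lemma~\ref{lemma1} with $(a,b,c,d)=(q^g,q^h,-q^g,-q^h)$, for which $ab=cd$, collapses this to $f(a,b)f(-a,-b)=f(-a^2,-b^2)\varphi(-ab)$, whence $f_1f_2=f(-q^{2g},-q^{2h})\,\varphi(-q^{\,S_1})=P\cdot f(-q^{2g},-q^{2h})$. Thus the left side of \eqref{3.3} equals $P\cdot\bigl[f(-q^{2g},-q^{2h})\,f(\epsilon_3q^i,\epsilon_3q^j)\bigr]$ and its right side equals $P\cdot[\text{sum}]$; cancelling $P$ and choosing $(g,h)$ so that $f(-q^{2g},-q^{2h})$ is the required first factor (e.g.\ $g=h=m/2$ gives $\varphi(-q^m)$, $g=3m/2$, $h=m/2$ gives $\psi(-q^m)=f(-q^m,-q^{3m})$, and $g=m$, $h=m/2$ gives $f(-q^m)=f(-q^m,-q^{2m})$) yields the stated identity. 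Running this with the eight parameter packages produces the eight formulas in turn.

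The main obstacle is bookkeeping rather than any single deep step: for each identity one must check that a \emph{single} admissible parameter choice simultaneously kills the second block, makes $P$ a genuine $\varphi$ or $\psi$ that exactly divides $f_1f_2$, and reproduces every $\alpha$-dependent exponent, sign and subscript $\delta_1+\delta_2+r\delta_3$, $\delta_1+\delta_2+\delta_3$. A secondary point of care is that the theta factors arising at the extreme values of $\alpha$ (typically $\alpha=[k/2]$), or with a vanishing or negative exponent, must be reduced by \eqref{2.4}, \eqref{2.5} and Lemma~\ref{l3} before the two sides can be compared; in particular such extreme terms usually contain a factor $f(-1,\cdot)=0$ and are discarded. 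Finally, the parity of $m$ governs whether the prescribed $g,h$ are integral (for instance $g=2m$, $h=m$ in the $f(-q^{2m})\varphi(q^3)$ case is integral for all $m$, whereas $g=h=m/2$ is not), and the non-integral cases would be handled by applying \eqref{3.3} after the substitution $q\mapsto q^{1/2}$ together with Lemma~\ref{l3}; this rescaling is the only genuinely fiddly part of the argument.
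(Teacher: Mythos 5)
Your proposal is correct and follows essentially the same route as the paper: specialize Theorem~\ref{MainThm3} with $k=m+1$, $r=m$, take $\epsilon_1=-1$, $\epsilon_2=1$ with $g=u$, $h=v$ so that the second block dies by \eqref{2.5}, collapse $f(q^g,q^h)f(-q^g,-q^h)$ via $f(a,b)f(-a,-b)=f(-a^2,-b^2)\varphi(-ab)$ (which the paper cites as Entry 30(iv) rather than rederiving from Lemma~\ref{lemma1}), cancel the common $\varphi$ prefactor, and rescale $q^2\mapsto q$ at the end. The only differences are cosmetic: the paper works with integral exponents throughout and performs the $q^2\to q$ replacement last, which is exactly the rescaling you flag as the fiddly step.
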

\begin{proof}
Sitting $k=m+1$ and $r=m$ in Theorem \ref{MainThm3}, we observe that
\begin{equation*}
  \begin{cases}
\gcd (2\,m+2,m)=1~~\text{and} ~~\gcd (2\,m+2,1)=1 & \text{if } m \text{ is odd} ,\\
\text{or}
\\
\gcd (2\,m+2,m)=2~~\text{and} ~~\gcd (2\,m+2,1)=1 & \text{if } m \text{ is even}.
\end{cases}
\end{equation*}
We set the values of $g$, $h$, $u$, $v$, $i$, $j$, $\epsilon_1$, $\epsilon_2$ and $\epsilon_3$ as in the following table in the same order as in the corollary:\\
\begin{center}
\begin{tabular}{|c|c|c|c|c|c|c|c|c|}
  \hline
  $g$ & $h$ & $u$ & $v$ & $i$ & $j$ & $\epsilon_1$ & $\epsilon_2$ & $\epsilon_3$ \\\hline \hline
  $m$ & $m$ & $m$ & $m$ & $2$ & $2$ & $-1$ & $1$ & $1$ \\\hline
  $3\,m$ & $m$ & $3\,m$ & $m$ & $6$ & $2$ & $-1$ & $1$ & $1$ \\\hline
  $2\,m$ & $m$ & $2\,m$ & $m$ & $4$ & $2$ & $-1$ & $1$ & $-1$ \\\hline
  $m$ & $m$ & $m$ & $m$ & $4$ & $0$ & $-1$ & $1$ & $1$ \\\hline
  $m$ & $m$ & $m$ & $m$ & $3$ & $1$ & $-1$ & $1$ & $1$ \\\hline
  $2\,m$ & $m$ & $2\,m$ & $m$ & $3$ & $3$ & $-1$ & $1$ & $1$ \\ \hline
  $3\,m$ & $m$ & $3\,m$ & $m$ & $4$ & $4$ & $-1$ & $1$ & $1$ \\\hline
  $3\,m$ & $m$ & $3\,m$ & $m$ & $8$ & $0$ & $-1$ & $1$ & $1$ \\
  \hline
\end{tabular}
\end{center}
Then, we apply  \eqref{2.5} and the identity \cite[Entry 30(iv)]{Adiga3}
\begin{equation} f(a,b)f(-a,-b)=f(-a^2,-b^2)\varphi(-ab).\nonumber \end{equation}
Finally,  replacing $q^2$ by $q$, after some simplifications, we deduce the desired results.
\end{proof}

The identities appearing in Corollary \ref{clp2} are analogous  to some identities found in the literature. For example, see  \cite{SCHEN}, \cite{Yan}, \cite{Cao} and  \cite{Adiga2}. Using this corollary, one can obtain several theta function identities found in Ramanujan's Notebook \cite{Ramanujan1} and  many modular relations for the Rogers-Ramanujan type functions.


Using our main theorem and Lemma \ref{lemma1}, we prove the following theorem which contains a general  formula  for the product of two theta functions:

\begin{theorem}\label{thm2} Let $k$ and $r$ be positive integers with $k>r$, $\gcd (2k,r)=1 \, \text{or} \, 2 $ and $\gcd(2k,k-r)=1$. For any integers $s$, $t$, $i$ and $j$ with
$s+t=S>0$, $s-t=D $,  $i+j=S_3>0$, $i-j=D_3$,  and $S=r \left( k-r \right) S_{{3}}$, we have
\begin{align}\label{thm2eq1}
  f \left( {q}^{s},{q}^{t} \right)  f
 \left( \epsilon {q}^{i},\epsilon{q}^{j} \right)
 \nonumber
=& \sum_{\alpha={\left[\frac{2 - k}{2}\right]}}^{\left[\frac{k}{2}\right]}(-1)^{\alpha \delta} q^{\alpha \left({\alpha}S_{{3}}+D_{{3}} \right)/2}
 \nonumber \\ & \times
  f_{r\delta} \left( {q}^{\left(r
 \left( S_{{3}} \left( k+2 \, \alpha \right) +D_{{3}} \right) +D \right)/2},{q}^{ \left(r \left( S_{{3}} \left( k- 2\, \alpha \right) -D_{
{3}} \right) -D\right)/2} \right) \nonumber \\ & \times f_{{ \delta}}
 \left( {q}^{\left( \left( k-r \right)  \left( S_{{3}} \left( k-2\,
\alpha \right) -D_{{3}} \right) +D\right)/2},{q}^{\left(
 \left( k-r \right)  \left( S_{{3}} \left( k+2\,\alpha \right) +D_{{3}
} \right) -D\right)/2} \right).
\end{align}
Here $\epsilon = \pm1$,   $\delta=
\ds\frac{1-\epsilon}{2}$  and $[x]$ denote the
greatest integer less than or equal to $x$.
\end{theorem}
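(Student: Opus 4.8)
The plan is to read Theorem~\ref{thm2} off as a specialization of Theorem~\ref{MainThm3}. The key observation is that the right-hand side of \eqref{thm2eq1} is \emph{exactly} the first of the two sums on the right-hand side of \eqref{3.3}, once one imposes $\delta_1+\delta_2\equiv 0\pmod 2$, $\delta_3=\delta$ and $D_1+D_2=D$: under these substitutions the subscripts $\delta_1+\delta_2+r\delta_3$ and $\delta_1+\delta_2+\delta_3$ become $r\delta$ and $\delta$, the sign $(-1)^{\alpha\delta_3}$ becomes $(-1)^{\alpha\delta}$, and every occurrence of $D_1+D_2$ becomes $D$. So it suffices to apply \eqref{3.3} in such a way that (i) the product $f(\epsilon_1 q^{g},\epsilon_1 q^{h})f(\epsilon_2 q^{u},\epsilon_2 q^{v})$ on the left collapses to $f(q^{s},q^{t})$ times the prefactor $f_{\delta_1+\delta_2}(q^{(r(k-r)S_3+D_1-D_2)/2},q^{(r(k-r)S_3-D_1+D_2)/2})$ of that first sum, and (ii) the entire second bracketed sum of \eqref{3.3} is removed.

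For the second step I would choose integers $g,h,u,v$ subject to
\[
 g+u=s,\qquad h+v=t,\qquad g+h=u+v=\tfrac12\,r(k-r)S_3 .
\]
The last two conditions are precisely the hypotheses $S_1=S_2$ and $2S_1=r(k-r)S_3$ of Theorem~\ref{MainThm3}, while the first two give $D_1+D_2=(g-h)+(u-v)=(g+u)-(h+v)=s-t=D$, which is what is needed to match \eqref{thm2eq1}. Using $g+h+u+v=r(k-r)S_3$ one also checks $g+v=(r(k-r)S_3+D_1-D_2)/2$ and $h+u=(r(k-r)S_3-D_1+D_2)/2$, so that the prefactor of the first sum is exactly $f_{\delta_1+\delta_2}(q^{g+v},q^{h+u})$.

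The third step executes the collapse. I would apply \eqref{3.3} twice with $\epsilon_3=\epsilon$ fixed, once with $(\epsilon_1,\epsilon_2)=(1,1)$ and once with $(\epsilon_1,\epsilon_2)=(-1,-1)$, and add. In both applications $\delta_1+\delta_2\equiv 0\pmod 2$, so the first sum together with its prefactor is identical in the two cases and contributes twice its common value; meanwhile the two inner sums of the second bracketed term carry the signs $(-1)^{\alpha\delta_3+\delta_1}$ and $(-1)^{\alpha\delta_3+\delta_2}$, and since $(-1)^{\delta_1}=(-1)^{\delta_2}$ changes from $+1$ to $-1$ between the two applications, the whole second bracketed term cancels. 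On the left, because $g+h=u+v$ the hypothesis $ab=cd$ of Lemma~\ref{lemma1} holds with $(a,b,c,d)=(q^{g},q^{h},q^{u},q^{v})$, so the two left-hand sides add to
\[
 \bigl[f(q^{g},q^{h})f(q^{u},q^{v})+f(-q^{g},-q^{h})f(-q^{u},-q^{v})\bigr]f(\epsilon q^{i},\epsilon q^{j})=2\,f(q^{g+u},q^{h+v})\,f(q^{g+v},q^{h+u})\,f(\epsilon q^{i},\epsilon q^{j}).
\]
As $g+u=s$ and $h+v=t$ this equals $2f(q^{s},q^{t})f(q^{g+v},q^{h+u})f(\epsilon q^{i},\epsilon q^{j})$. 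Equating the two sides of the added identity and cancelling the common factor $2\,f(q^{g+v},q^{h+u})$ — precisely the prefactor of the surviving sum — yields \eqref{thm2eq1}.

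The main difficulty is not conceptual but careful bookkeeping: one must verify that the prefactor of the first sum really is the second Jacobi factor $f(q^{g+v},q^{h+u})$ delivered by Lemma~\ref{lemma1}, and that $D_1+D_2=D$, so that after cancellation the surviving sum is \emph{literally} \eqref{thm2eq1}. A secondary technical point is integrality: $g,h,u,v$ can be taken in $\mathbb{Z}$ exactly when $r(k-r)S_3$ is even, which by the hypotheses always holds when $\gcd(2k,r)=2$. When $r(k-r)S_3$ is odd the exponents in \eqref{thm2eq1} already lie in $\tfrac12\mathbb{Z}$, and I would dispose of this case by first replacing $q$ by $q^{2}$ (doubling all exponents so they become even), running the argument above with integral $g,h,u,v$, and then reading the resulting identity back as one in $q^{1/2}$.
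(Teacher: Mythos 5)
Your proposal is correct and follows essentially the same route as the paper: apply Theorem~\ref{MainThm3} with $(\epsilon_1,\epsilon_2)=(1,1)$ and $(-1,-1)$, add so that the second bracketed sum cancels, invoke Lemma~\ref{lemma1} with $a=q^g$, $b=q^h$, $c=q^u$, $d=q^v$, and cancel the common factor $f(q^{g+v},q^{h+u})$ after setting $s=g+u$, $t=h+v$. Your additional remarks — the explicit verification that the prefactor of the surviving sum coincides with the second Jacobi factor produced by Lemma~\ref{lemma1}, and the parity discussion ensuring integral $g,h,u,v$ exist (with the $q\mapsto q^2$ device when $r(k-r)S_3$ is odd) — address points the paper passes over in silence, but do not change the argument.
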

\begin{proof} Setting $\epsilon_{1}=1$ and $\epsilon_{2}=1$ in Theorem \ref{MainThm3},
 again setting $\epsilon_{1}=-1$ and $\epsilon_{2}=-1$ in Theorem \ref{MainThm3},
 adding the resulting identities and then applying Lemma \ref{lemma1} with
  $a=q^g$, $b=q^h$, $c=q^u$ and $d=q^v$, after some simplifications, we obtain
\begin{align*}
  f &\left( {q}^{g+u},{q}^{h+v} \right)   f
 \left( \epsilon _{{3}}{q}^{i},\epsilon _{{3}}{q}^{j} \right)=
 \sum_{\alpha={\left[\frac{2 - k}{2}\right]}}^{\left[\frac{k}{2}\right]}(-1)^{\alpha \delta_3} q^{\alpha \left({\alpha}S_{{3}}+D_{{3}} \right)/2} \nonumber \\ & \times f_{{r\delta_{{3}}}} \left( {q}^{\left(r
 \left( S_{{3}} \left( k+2 \, \alpha \right) +D_{{3}} \right) +D_{{1}}
+D_{{2}} \right)/2},{q}^{ \left(r \left( S_{{3}} \left( k- 2\, \alpha \right) -D_{
{3}} \right) -D_{{1}}-D_{{2}}\right)/2} \right) \nonumber \\ & \times f_{{ \delta_{{3}}}}
 \left( {q}^{\left( \left( k-r \right)  \left( S_{{3}} \left( k-2\,
\alpha \right) -D_{{3}} \right) +D_{{1}}+D_{{2}}\right)/2},{q}^{\left(
 \left( k-r \right)  \left( S_{{3}} \left( k+2\,\alpha \right) +D_{{3}
} \right) -D_{{1}}-D_{{2}}\right)/2} \right).
\end{align*}
Putting $s=g+u$, $t=h+v$, $S=s+t$ and $D=s-t$, so that $S=S_1+S_2=2\,S_1$, $D=D_1+D_2$ and $S=r(k-r)S_3$, using these in the above identity, with replacing $\epsilon_3$ and $\delta_3$ by $\epsilon$ and $\delta$, respectively, we deduce \eqref{thm2eq1}.
\end{proof}
\section{Applications to the theory of representation of integers}\label{Applications}
In this section, we use Theorem \ref{MainThm3} to establish many relations between the  number of representations of $N \in \mathbb{N}_0$ as  ternary sums involving squares, triangular numbers, generalized  pentagonal
numbers and generalized octagonal numbers. We define for  $N \in \mathbb{N}_0$, the  following:
\begin{align*}
  r(a_1,a_2,a_3;N):=& \left|  \left\{ (l,m,n) \in \mathbb{Z}^3 | N= a_1\,l^2 +a_2 \,m^2+a_3n^2  \right\}  \right|,\\
   T(a_1,a_2,a_3;N):=& \left|  \left\{ (l,m,n) \in \mathbb{N}_0^3| N= a_1\,t_{l} +a_2 \,t_{m}+a_3t_{n}  \right\}  \right|,\\
     P(a_1,a_2,a_3;N):=& \left|  \left\{ (l,m,n) \in \mathbb{Z}^3 | N= a_1\,p_{l} +a_2 \,p_{m}+a_3p_{n}  \right\}  \right|,\\
     G(a_1,a_2,a_3;N):=& \left|  \left\{ (l,m,n) \in \mathbb{Z}^3 | N= a_1\,g_{l} +a_2 \,g_{m}+a_3g_{n}  \right\}  \right|,\\
     Rt(a_1,a_2,a_3;N):=& \left|  \left\{ (l,m,n) \in \mathbb{Z}^2\times \mathbb{N}_0 | N= a_1\,l^2 +a_2 \,m^2+a_3\,t_n  \right\}  \right|,\\
Rp(a_1,a_2,a_3;N):=& \left|  \left\{ (l,m,n) \in \mathbb{Z}^3 | N= a_1\,l^2 +a_2 \,m^2+a_3\,p_n  \right\}  \right|,\\
     Rg(a_1,a_2,a_3;N):=& \left|  \left\{ (l,m,n) \in \mathbb{Z}^3 | N= a_1\,l^2 +a_2 \,m^2+a_3\,g_n  \right\}  \right|,\\
     Tp(a_1,a_2,a_3;N):=& \left|  \left\{ (l,m,n) \in \mathbb{N}^2_0 \times \mathbb{Z} | N= a_1\,t_l +a_2 \,t_m+a_3\,p_n  \right\}  \right|,\\
     Tg(a_1,a_2,a_3;N):=& \left|  \left\{ (l,m,n) \in \mathbb{N}^2_0 \times \mathbb{Z} | N= a_1\,t_l +a_2 \,t_m+a_3\,g_n  \right\}  \right|,\\
     rT(a_1,a_2,a_3;N):=& \left|  \left\{ (l,m,n) \in \mathbb{Z}\times \mathbb{N}^2_0 | N= a_1\,l^2 +a_2 \,t_m+a_3\,t_n  \right\}  \right|,\\
     rP(a_1,a_2,a_3;N):=& \left|  \left\{ (l,m,n) \in \mathbb{Z}^3 | N= a_1\,l^2 +a_2 \,p_m+a_3\,p_n  \right\}  \right|,\\
     rG(a_1,a_2,a_3;N):=& \left|  \left\{ (l,m,n) \in \mathbb{Z}^3 | N= a_1\,l^2 +a_2 \,g_m+a_3\,g_n  \right\}  \right|,\\
     pG(a_1,a_2,a_3;N):=& \left|  \left\{ (l,m,n) \in \mathbb{Z}^3 | N= a_1\,p_l +a_2 \,g_m+a_3\,g_n  \right\}  \right|,\\
     tP(a_1,a_2,a_3;N):=& \left|  \left\{ (l,m,n) \in \mathbb{N}_0 \times \mathbb{Z}^2 | N= a_1\,t_l +a_2 \,p_m+a_3\,p_n  \right\}  \right|,\\
     tG(a_1,a_2,a_3;N):=& \left|  \left\{ (l,m,n) \in \mathbb{N}_0 \times \mathbb{Z}^2 | N= a_1\,t_l +a_2 \,g_m+a_3\,g_n  \right\}  \right|,\\
     Pg(a_1,a_2,a_3;N):=& \left|  \left\{ (l,m,n) \in \mathbb{Z}^3 | N= a_1\,p_l +a_2 \,p_m+a_3\,g_n  \right\}  \right|,\\
     rtp(a_1,a_2,a_3;N):=& \left|  \left\{ (l,m,n) \in \mathbb{Z}\times \mathbb{N}_0\times\mathbb{Z} | N= a_1\,l^2 +a_2 \,t_m+a_3\,p_n  \right\}  \right|,\\
     rtg(a_1,a_2,a_3;N):=& \left|  \left\{ (l,m,n) \in \mathbb{Z}\times \mathbb{N}_0\times\mathbb{Z} | N= a_1\,l^2 +a_2 \,t_m+a_3\,g_n  \right\}  \right|,
     \end{align*}
     \begin{align*}
     rpg(a_1,a_2,a_3;N):=& \left|  \left\{ (l,m,n) \in \mathbb{Z}^3\mathbb{Z} | N= a_1\,l^2 +a_2 \,p_m+a_3\,g_n  \right\}  \right|,\\
     tpg(a_1,a_2,a_3;N):=& \left|  \left\{ (l,m,n) \in \mathbb{N}_0 \times \mathbb{Z}^2 | N= a_1\,t_l +a_2 \,p_m+a_3\,g_n  \right\}  \right|.
\end{align*}
where $a_1, a_2,  a_3 \in \mathbb{N}$.
\begin{theorem}\label{Athm1}
For each non-negative integer $N$, we have
\begin{equation*}
rT(1,1,1;N) =
\begin{cases}
Rt(2,2,2;N) & \text{if } N \equiv 0\pmod{2},
\\
4 \, T(2,4,4;N-1) & \text{if } N \equiv 1\pmod{2}.
\end{cases}
\end{equation*}
\end{theorem}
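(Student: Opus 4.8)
The plan is to reduce the statement to a single $2$-dissection identity for theta functions and then compare coefficients in the two residue classes modulo $2$. First I would pass to generating functions. Since $\varphi(q)=\sum_{n\in\mathbb{Z}}q^{n^2}$ records squares over $\mathbb{Z}$ and $\psi(q)=\sum_{n\ge0}q^{t_n}$ records triangular numbers over $\mathbb{N}_0$, the three quantities in the statement are coefficients of
\begin{equation}\label{Athm1gen}
\sum_{N\ge0}rT(1,1,1;N)\,q^N=\varphi(q)\,\psi(q)^2,\qquad \sum_{N\ge0}Rt(2,2,2;N)\,q^N=\varphi(q^2)^2\,\psi(q^2),
\end{equation}
while $\sum_{M\ge0}T(2,4,4;M)\,q^M=\psi(q^2)\,\psi(q^4)^2$, so that $\sum_{N\ge0}4\,T(2,4,4;N-1)\,q^N=4q\,\psi(q^2)\,\psi(q^4)^2$. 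With these identifications the theorem is exactly equivalent to the single identity
\begin{equation}\label{Athm1star}
\varphi(q)\,\psi(q)^2=\varphi(q^2)^2\,\psi(q^2)+4q\,\psi(q^2)\,\psi(q^4)^2 .
\end{equation}

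To establish \eqref{Athm1star} I would specialize Corollary \ref{cor2} to $k=2$ and $r=1$; these satisfy $k>r$, $\gcd(2k,r)=\gcd(4,1)=1$ and $\gcd(2k,k-r)=\gcd(4,1)=1$, and they give $r(k-r)=1$, so the left-hand side of that corollary becomes $4\,\psi(q)^2\varphi(q)$. At $k=2$ every $\alpha$-range collapses: the first sum runs over $\alpha\in\{0,1\}$, and each of the two braced sums contributes only its $\alpha=1$ term. I would then normalize the resulting theta functions: the symmetry $f(a,b)=f(b,a)$ together with Lemma \ref{l3} (equivalently the quasi-periodicity $f(a,b)=a\,f(a^2b,a^{-1})$) turns the boundary factor $f(q^5,q^{-1})$ into $q^{-1}\psi(q)$, while \eqref{2.4} turns $f(q^4,1)$ into $2\psi(q^4)$. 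Both terms of the first sum then collapse to $\psi(q)^2$, so that sum contributes $2\,\varphi(q)\psi(q)^2$, and the braces simplify to $\varphi(q^2)^2+4q\,\psi(q^4)^2$. Transposing the $2\,\varphi(q)\psi(q)^2$ to the left-hand side $4\,\varphi(q)\psi(q)^2$ and dividing by $2$ yields precisely \eqref{Athm1star}.

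Finally I would read off the two cases from \eqref{Athm1star} by a parity argument: the summand $\varphi(q^2)^2\,\psi(q^2)$ is a power series in $q^2$ and so supports only even powers of $q$, whereas $4q\,\psi(q^2)\,\psi(q^4)^2$ is $q$ times a power series in $q^2$ and so supports only odd powers of $q$. Hence comparing the coefficient of $q^N$ in \eqref{Athm1star} and using \eqref{Athm1gen} gives $rT(1,1,1;N)=Rt(2,2,2;N)$ for even $N$ and $rT(1,1,1;N)=4\,T(2,4,4;N-1)$ for odd $N$, which is the theorem.

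I expect the only delicate step to be the specialization of Corollary \ref{cor2}: one must evaluate all exponents correctly at $k=2$, $r=1$, and reduce the degenerate theta values $f(q^5,q^{-1})$ and $f(q^4,1)$ to honest $\psi$-functions via Lemma \ref{l3} and \eqref{2.4}. It is also worth flagging that the first $\alpha$-sum does not vanish but produces a copy of $2\,\varphi(q)\psi(q)^2$ that merges with the left-hand side; overlooking this cancellation would give the wrong constant. Everything after \eqref{Athm1star} is the routine even/odd coefficient comparison.
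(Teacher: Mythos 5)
Your proof is correct and follows essentially the same route as the paper: both reduce the theorem to the single dissection identity $\psi^{2}(q)\varphi(q)=\psi(q^{2})\varphi^{2}(q^{2})+4q\,\psi(q^{2})\psi^{2}(q^{4})$ and then compare coefficients by parity. The only cosmetic difference is that you pass through Corollary \ref{cor2} at $k=2$, $r=1$, while the paper specializes Theorem \ref{MainThm3} directly with $g=u=1$, $h=v=0$, $i=j=1$ --- which is the same specialization, including the cancellation of the $2\varphi(q)\psi^{2}(q)$ term that you correctly flag.
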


\begin{proof}Putting $k=2$, $r=1$, $\epsilon_1=\epsilon_2=\epsilon_3=1$, $g=u=1$, $h=v=0$ and $i=j=1$ in Theorem \ref{MainThm3}, we deduce
\begin{equation}
  \label{cor51}   \psi^{2} \left( q \right)   \varphi  \left( q \right) =
\psi \left( {q}^{2} \right)   \varphi^{2}  \left( {q}^
{2} \right) +4\,q\psi \left( {q}^{2} \right)   \psi^{2} \left( {q}^{4} \right).
\end{equation}
 By the definitions of $\varphi(q)$ and $\psi(q)$, we can rewrite \eqref{cor51} in the form
\begin{align*}
  \left(\sum_{l=-\infty}^{\infty}q^{l^2}  \right)\left(\sum_{m=0}^{\infty} q^{t_m} \right ) \left(\sum_{n=0}^{\infty} q^{t_n} \right) =&\left(\sum_{l=-\infty}^{\infty} q^{2\,l^2} \right) \left(\sum_{m=-\infty}^{\infty}q^{2\, m^2}  \right)\left(\sum_{n=0}^{\infty} q^{2\,t_n} \right)\\
&+4\,q \left(\sum_{l=0}^{\infty} q^{2\,t_l} \right) \left(\sum_{m=0}^{\infty}q^{4\,t_m}  \right)\left(\sum_{n=0}^{\infty} q^{4\,t_n} \right).
\end{align*}
Hence
\begin{equation}\label{Ath11}
  \left(\sum_{N=l^2+t_m+t_n}1  \right)=\left( \sum_{N=2(l^2+m^2+t_n)}1 \right)+4\left( \sum_{N-1=2(t_l+2(t_m+t_n))}1 \right),
\end{equation}
which can be written in the form
\begin{equation} \label{Ath12} rT(1,1,1;N) =Rt(2,2,2;N) + 4 \, T(2,4,4;N-1).\end{equation}
From \eqref{Ath11}, we  observe that if $N$ is even then $N-1$ is odd and  cannot be represented in the form $2(t_l+2(t_m+t_n))$, so that  $T(2,4,4;N-1)=0$. Similarly, if $N$ is odd then $N$ cannot be  represented in the form  $2(l^2+m^2+t_n)$, so that  $Rt(2,2,2;N)=0$. Thus, using \eqref{Ath12},  we deduce the desired result.
\end{proof}
\begin{exm}
We have $rT(1,1,1;5)=8$, because there are $8$ ways to represent $5$ in the form $l^2+t_m+t_n$, namely
 $(2)^2+0+1$, $(-2)^2+0+1$, $(2)^2+1+0$, $(-2)^2+1+0$, $(1)^2+1+3$, $(-1)^2+1+3$, $(1)^2+3+1$, $(-1)^2+3+1$,   $Rt(2,2,2;5)=0$ because $5$ cannot be represented in the form $2(l^2+m^2+t_n)$, and $T(2,4,4;4)=2$ because there are $2$ ways to represent $4$ in the form $2(t_l+2(t_m+t_n))$, namely, $2(0+2(1)+2(0))$, $2(0+2(0)+2(1))$ so that $rT(1,1,1;5) =4\, T(2,4,4;4) $. One can check that $rT(1,1,1;10) =Rt(2,2,2;10)=16$.
 \end{exm}
\begin{remark}
Theorem \ref{Athm1} can be written in the following form: For each non-negative integer $N$, we have
\begin{align*}
 rT(1,1,1;2\,N)& =Rt(1,1,1;N)\\
  rT(1,1,1;2\,N)& = 4 \, T(1,2,2;N)
\end{align*}
\end{remark}
\begin{corollary}
Every non-negative integer  can be written in the form
$l^2+m^2+t_n ~~ \text{or}~~ t_l+2\,t_m+2\,t_n $.
\end{corollary}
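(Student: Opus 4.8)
The plan is to derive the corollary from Theorem~\ref{Athm1} by supplying exactly one external positivity fact: that the ternary form $x^2+t_y+t_z$ is universal. This is precisely the vector $(1,1,1)$ in Sun's list recalled in Section~\ref{Introduction}, so every $N\in\mathbb{N}_0$ can be written as $l^2+t_m+t_n$, i.e. $rT(1,1,1;N)\ge 1$ for all $N$. I would quote this and treat it as the only nontrivial input; everything afterward is bookkeeping that the theta identity already provides.

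First I would feed this positivity into \eqref{Ath12}, namely $rT(1,1,1;N)=Rt(2,2,2;N)+4\,T(2,4,4;N-1)$. Since the left-hand side is at least $1$ for every $N$, at least one of the two nonnegative summands on the right is positive. The decisive observation is that these summands are supported on opposite parities: $Rt(2,2,2;N)$ counts representations $N=2l^2+2m^2+2t_n$ and so can be nonzero only for even $N$, while $T(2,4,4;N-1)$ counts $N-1=2t_l+4t_m+4t_n$ and so can be nonzero only for odd $N$. Thus the alternative is clean rather than merely an inequality: for even $N$ the first term carries the representation, and for odd $N$ the second does.

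Next I would rescale by $2$ and let $N$ run through both parities to recover the two forms in the statement. Writing $N=2K$ in the even case, $Rt(2,2,2;2K)\ge 1$ gives $2K=2l^2+2m^2+2t_n$, hence $K=l^2+m^2+t_n$; writing $N=2K+1$ in the odd case, $T(2,4,4;2K)\ge 1$ gives $2K=2t_l+4t_m+4t_n$, hence $K=t_l+2t_m+2t_n$. As $K$ ranges over $\mathbb{N}_0$ it is reached by the even value $N=2K$ and by the odd value $N=2K+1$, so each non-negative integer lands in the first form (via the even case) and, independently, in the second (via the odd case); in particular the disjunction asserted in the corollary holds. (The same conclusion drops out even faster from the reformulation in the Remark, where the rescaling is already absorbed into the identities $Rt(1,1,1;N)=rT(1,1,1;2N)$ and $4\,T(1,2,2;N)=rT(1,1,1;2N+1)$.)

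The hard part is not computational but conceptual: the entire weight of the corollary rests on the universality of $x^2+t_y+t_z$, and once that positivity is granted the theta-function identity only performs the translation between scaled and unscaled forms. Accordingly, the one step I would check carefully is that the two summands in \eqref{Ath12} have genuinely disjoint parity support, since this is what upgrades the bound $rT(1,1,1;N)\ge 1$ into an honest even/odd case split and pins each rescaled integer to a definite one of the two forms.
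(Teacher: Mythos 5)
Your proposal is correct and follows essentially the same route as the paper: both invoke Sun's universality of $l^2+t_m+t_n$ to get $rT(1,1,1;N)\neq 0$, feed this into the parity-separated identity of Theorem \ref{Athm1} (equivalently \eqref{Ath12}), and then divide out the factor of $2$ in each case to land on the two forms. In fact both arguments establish the stronger conclusion that every non-negative integer is representable in \emph{each} of the two forms, not merely one of them.
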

\begin{proof}
Sun \cite{Sun} proved that, every non-negative integer $N$ can be written in the form $l^2+t_m+t_n$, so that $rT(1,1,1;N) \neq 0$. Thus
 \begin{equation*}
0\neq
\begin{cases}
Rt(2,2,2;N) & \text{if } N \equiv 0\pmod{2},
\\
4 \, T(2,4,4;N-1) & \text{if } N \equiv 1\pmod{2}.
\end{cases}
\end{equation*}
Then, for any non-negative integer $k$, we have $Rt(2,2,2;2\,k)\neq 0$, so that there exist $l,m \in \mathbb{Z}$ and $n \in \mathbb{N}_0$ such that $2\,k=2(l^2+m^2+t_n)$ hence  $k=l^2+m^2+t_n$, since $k\in \mathbb{N}_0$ is arbitrary, this completes the first part of the corollary. In a similar way, we deduce the second part.
\end{proof}

First part of the above corollary is established by Sun \cite{Sun}. Second part is due to  Liouville.



\begin{theorem}\label{Athm2}
For each non-negative integer $N$, we have
\begin{equation*}
Rt(1,1,4;N) =
\begin{cases}
rT(2,2,2;N) & \text{if } N \equiv 0\pmod{2},
\\
4 \, rT(4,4,8;N-1) & \text{if } N \equiv 1\pmod{4},
\\ 0    &  \text{if } N \equiv 3\pmod{4}.
\end{cases}
\end{equation*}
\end{theorem}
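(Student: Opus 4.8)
The plan is to follow the same strategy as in the proof of Theorem \ref{Athm1}: first extract from Theorem \ref{MainThm3} a theta-function identity whose three summands are the generating functions of the three quantities appearing in the statement, and then read off the coefficient of $q^N$ in each, separating according to the residue of $N$.

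Concretely, I would specialize Theorem \ref{MainThm3} by taking $k=2$, $r=1$, $\epsilon_1=\epsilon_2=\epsilon_3=1$, $g=u=1$, $h=v=1$ and $i=4$, $j=0$. Then $S_1=S_2=2$, $D_1=D_2=0$, $S_3=4$, $D_3=4$, so the hypotheses $S_1=S_2$, $2S_1=r(k-r)S_3$, $\gcd(2k,r)=1$ and $\gcd(2k,k-r)=1$ all hold. On the left the three factors become $f(q,q)=\varphi(q)$, $f(q,q)=\varphi(q)$ and $f(q^4,1)=2\psi(q^4)$ after applying \eqref{2.4}, so the left side is $2\varphi^2(q)\psi(q^4)$. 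On the right the range of $\alpha$ collapses to $\{0,1\}$ in the first sum and to $\alpha=1$ in each bracketed sum; evaluating the resulting thetas (using \eqref{2.4} to write $f(\cdot,1)=2\psi$, and Lemma \ref{l3} to reindex the terms with negative second argument, for instance $f(q^{10},q^{-2})=q^{-2}\psi(q^2)$) and dividing by $2$ should give the clean identity
\begin{equation*}
\varphi^2(q)\,\psi(q^4)=\varphi(q^2)\,\psi^2(q^2)+4q\,\varphi(q^4)\,\psi(q^4)\,\psi(q^8).
\end{equation*}

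With this identity in hand, I would expand each theta factor by the definitions of $\varphi$ and $\psi$, so that the left side becomes $\sum_N Rt(1,1,4;N)q^N$, the first term on the right becomes $\sum_N rT(2,2,2;N)q^N$ (supported on even $N$, since every exponent $2l^2+2t_m+2t_n$ is even), and the second becomes $4\sum_M rT(4,4,8;M)q^{M+1}$ (supported on $N=M+1\equiv 1\pmod 4$, since $M=4l^2+4t_m+8t_n\equiv 0\pmod 4$). Comparing coefficients of $q^N$ then yields the three cases: for even $N$ only the first term survives, giving $Rt(1,1,4;N)=rT(2,2,2;N)$; for $N\equiv 1\pmod 4$ only the second survives, giving $Rt(1,1,4;N)=4\,rT(4,4,8;N-1)$; and for $N\equiv 3\pmod 4$ no term on the right contributes, forcing $Rt(1,1,4;N)=0$.

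The main obstacle is purely computational: carrying out the specialization of the heavy formula \eqref{3.3} and simplifying every resulting theta to its canonical $\varphi$/$\psi$ form, in particular correctly reindexing the terms with negative second argument via Lemma \ref{l3} and tracking the powers of $2$ so that they cancel against the factor $2$ produced on the left by $f(q^4,1)=2\psi(q^4)$. Once the displayed theta identity is secured, the congruence bookkeeping is routine, the only point needing comment being that the support of the second right-hand term lies entirely in $N\equiv 1\pmod 4$, which is what simultaneously produces the factor $4\,rT(4,4,8;N-1)$ and the vanishing for $N\equiv 3\pmod 4$.
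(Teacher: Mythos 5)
Your proposal is correct and takes essentially the same route as the paper: the same specialization $k=2$, $r=1$, $g=h=u=v=1$, $i=4$, $j=0$ of Theorem \ref{MainThm3} yielding the identity $\varphi^2(q)\psi(q^4)=\varphi(q^2)\psi^2(q^2)+4q\,\varphi(q^4)\psi(q^4)\psi(q^8)$ (the paper's \eqref{cor52}), followed by the same coefficient extraction and support analysis modulo $2$ and modulo $4$.
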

\begin{proof}
Setting $k=2$, $r=1$, $\epsilon_1=\epsilon_2=\epsilon_3=1$, $g=h=u=v=1$, $i=4$ and $j=0$ in Theorem \ref{MainThm3}, we obtain
\begin{equation}
  \label{cor52} \varphi^{2}  \left( q \right)  \psi \left( {q}^{4}
 \right) =\varphi  \left( {q}^{2} \right)   \psi^{2} \left( {q}^{2}
 \right)  +4\,q\psi \left( {q}^{4} \right) \psi \left( {q}
^{8} \right) \varphi  \left( {q}^{4} \right).
\end{equation}
Using \eqref{cor52}, we deduce
\begin{equation}\label{Ath21}
\left(  \sum_{N=l^2+m^2+4\,t_n}1 \right)=\left(  \sum_{N=2(l^2+t_m+t_n)}1 \right)+4\left(  \sum_{N-1=4(l^2+2\,t_m+t_n)}1 \right),
\end{equation}
which can be written as
\begin{equation} \label{Ath22} Rt(1,1,4;N) =  rT(2,2,2;N) + 4 \, rT(4,4,8;N-1).\end{equation}
From \eqref{Ath21}, we observe that, if $N$ is even then  $N-1$ is odd and cannot be represented in the form  $4(l^2+2\,t_m+t_n)$, so that  $rT(4,4,8;N-1)=0$. If $N$ is odd then,  it cannot be written in the form $2(l^2+t_m+t_n)$, so that  $rT(2,2,2;N)=0$. But in case $N$ is odd it can be written in either $4\,k+1$ or $4\,k+3$, if $N=4\,k+3$, then $N-1$ cannot be written in the form $4(l^2+2\,t_m+t_n)$, and so $rT(4,4,8;N-1)=0$. Thus, using \eqref{Ath22},  we obtain the desired result.
\end{proof}
\begin{remark}
We can rewrite Theorem \ref{Athm2} in the following form:
For each non-negative integer $N$, we have
\begin{align*}
 Rt(1,1,4;2\,N) &=  rT(1,1,1;N),\\
 Rt(1,1,4;4\,N+1) &=  4 \, rT(1,1,2;N),\\
  Rt(1,1,4;4\,N+3) &=0.
\end{align*}
\end{remark}

By the fact that,  every non-negative integer $N$ can be written in the form $l^2+t_m+t_n$ or in the form $l^2+2\,t_m+t_n$, which was proved by Sun \cite{Sun}, and using Theorem \ref{Athm2}, we deduce the following result:
\begin{corollary} Every non-negative integer $N$ can be represented in the form $l^2+m^2+4\,t_n$ ($l,m,n \in \mathbb{Z})$),
if and only if it is not of the form $4\,k+3$ $(k \in \mathbb{N}_0)$.
\end{corollary}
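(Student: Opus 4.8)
The plan is to translate the representability question into the nonvanishing of the counting function $Rt(1,1,4;N)$ and then to read off the answer from the reformulation of Theorem~\ref{Athm2} recorded in the preceding Remark. Since $N$ is of the form $l^2+m^2+4\,t_n$ with $l,m,n\in\mathbb{Z}$ if and only if $Rt(1,1,4;N)\neq 0$, the corollary amounts to showing that $Rt(1,1,4;N)\neq 0$ exactly when $N\not\equiv 3\pmod 4$. The Remark supplies the three identities $Rt(1,1,4;2N)=rT(1,1,1;N)$, $Rt(1,1,4;4N+1)=4\,rT(1,1,2;N)$ and $Rt(1,1,4;4N+3)=0$, so it suffices to decide the nonvanishing of the right-hand sides in each residue class.

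The necessity (``only if'') direction is immediate from the last identity: whenever $N\equiv 3\pmod 4$ we have $Rt(1,1,4;N)=0$, so no such $N$ is representable. For the sufficiency (``if'') direction I would split $N\not\equiv 3\pmod 4$ into the two surviving residue classes. If $N$ is even, I write $N=2M$ and invoke the first identity together with Sun's theorem \cite{Sun} that every non-negative integer is of the form $x^2+t_y+t_z$, i.e. $rT(1,1,1;M)\neq 0$; hence $Rt(1,1,4;N)\neq 0$. If $N\equiv 1\pmod 4$, I write $N=4M+1$ and use the second identity together with Sun's theorem that every non-negative integer is of the form $x^2+2\,t_y+t_z$; since the two triangular entries of $rT$ may be interchanged, $rT(1,1,2;M)=rT(1,2,1;M)\neq 0$, whence $Rt(1,1,4;N)=4\,rT(1,1,2;M)\neq 0$. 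Combining the three cases yields $Rt(1,1,4;N)\neq 0$ if and only if $N\not\equiv 3\pmod 4$, which is the claim.

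There is essentially no analytic obstacle here, since all of the theta-function work has already been discharged in Theorem~\ref{MainThm3} and Theorem~\ref{Athm2}. The only points requiring care are the bookkeeping identification $rT(1,1,2)=rT(1,2,1)$, so that Sun's universality list applies verbatim, and the observation that Sun's results are being used purely as positivity statements ($rT(\cdot)\neq 0$) rather than as exact formulas. As an independent check on the vanishing $Rt(1,1,4;4N+3)=0$, one may also verify the necessity direction by the elementary congruence $l^2+m^2+4\,t_n\equiv l^2+m^2\pmod 4$, whose value never equals $3$.
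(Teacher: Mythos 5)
Your argument is correct and follows essentially the same route as the paper: the necessity comes from the vanishing identity $Rt(1,1,4;4N+3)=0$ of Theorem~\ref{Athm2}, and the sufficiency combines the other two identities of that theorem with Sun's universality of the forms $x^2+t_y+t_z$ and $x^2+2\,t_y+t_z$. Your added remarks --- the reordering $rT(1,1,2)=rT(1,2,1)$ and the elementary congruence check $l^2+m^2+4t_n\not\equiv 3\pmod 4$ --- are correct and only make explicit what the paper leaves implicit.
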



\begin{theorem}
\label{Athm3}For each non-negative integer $N$, we have
\begin{equation*}
r(1,1,2;N) =
\begin{cases}
r(2,4,4;N)+4\,rT(2,8,8;N-2)  & \text{if } N \equiv 0\pmod{2},
\\
4 \, T(2,2,4;N-1) & \text{if } N \equiv 1\pmod{2}.
\end{cases}
\end{equation*}
\end{theorem}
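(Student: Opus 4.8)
The plan is to follow the same strategy used in the proofs of Theorems \ref{Athm1} and \ref{Athm2}: specialize the master identity of Theorem \ref{MainThm3} to a clean theta--function identity whose three summands are the generating functions of the three representation counts appearing in the statement, and then separate the cases by a parity argument.

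First I would set $k=2$, $r=1$, $\epsilon_1=\epsilon_2=\epsilon_3=1$, together with $g=h=u=v=1$ and $i=j=2$. With these choices the left side of \eqref{3.3} becomes $\varphi^2(q)\varphi(q^2)$, and the hypotheses are readily checked: here $S_1=S_2=2$, $S_3=4$ and all $D_i=0$, so $S_1=S_2$ and $2S_1=4=r(k-r)S_3$ hold, while $\gcd(2k,r)=\gcd(4,1)=1$ and $\gcd(2k,k-r)=\gcd(4,1)=1$. Since $k=2$, the $\alpha$--ranges collapse to very short sums ($\alpha\in\{0,1\}$ in the first summation, and a single term $\alpha=1$ in each of the two bracketed sums), so the evaluation is brief. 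Using $f(1,a)=2f(a,a^3)$ from \eqref{2.4} together with the symmetry $f(a,b)=f(b,a)$ --- needed wherever an entry $q^0$ appears --- and the definitions of $\varphi$ and $\psi$, I expect the formula to collapse to
\begin{equation}\label{Ath31}
\varphi^2(q)\varphi(q^2)=\varphi(q^2)\varphi^2(q^4)+4\,q^2\varphi(q^2)\psi^2(q^8)+4\,q\,\psi^2(q^2)\psi(q^4).
\end{equation}

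Next I would expand each theta product on both sides of \eqref{Ath31} as a multiple series via \eqref{2.1} and \eqref{2.8} and compare coefficients of $q^N$, exactly as \eqref{Ath11}--\eqref{Ath12} were extracted from \eqref{cor51}. This gives
\[
r(1,1,2;N)=r(2,4,4;N)+4\,rT(2,8,8;N-2)+4\,T(2,2,4;N-1),
\]
where the factors $q^2$ and $q$ in \eqref{Ath31} produce the index shifts $N-2$ and $N-1$, and the dilations $q\mapsto q^2,q^4,q^8$ produce the coefficients $2,4,8$ in the argument lists.

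Finally, the case split is a parity observation on the admissible exponents. The forms $2l^2+4m^2+4n^2$ and $2l^2+8t_m+8t_n$ are always even, so $r(2,4,4;N)$ and $rT(2,8,8;N-2)$ vanish unless $N$ is even; the form $2t_l+2t_m+4t_n$ is always even, so $T(2,2,4;N-1)$ vanishes unless $N-1$ is even, i.e.\ unless $N$ is odd. Hence for even $N$ the last term drops out and for odd $N$ the first two drop out, yielding the stated two--case formula. I expect the only real work to be the bookkeeping in deriving \eqref{Ath31}: tracking the arguments of each $f_{\delta}$ through the quadratic exponents in \eqref{3.3} and correctly invoking \eqref{2.4} at the several places where a $q^0$ entry arises; everything afterward is routine coefficient comparison and parity.
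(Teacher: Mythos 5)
Your proposal is correct and follows the paper's proof essentially verbatim: the same specialization $k=2$, $r=1$, $\epsilon_1=\epsilon_2=\epsilon_3=1$, $g=h=u=v=1$, $i=j=2$ of Theorem \ref{MainThm3} yields exactly the identity you predict, $\varphi^{2}(q)\,\varphi(q^{2})=\varphi(q^{2})\,\varphi^{2}(q^{4})+4\,q\,\psi(q^{4})\,\psi^{2}(q^{2})+4\,q^{2}\,\varphi(q^{2})\,\psi^{2}(q^{8})$, and the subsequent coefficient comparison and parity split are the same as in the paper (which states the combined formula $r(1,1,2;N)=r(2,4,4;N)+4\,T(2,2,4;N-1)+4\,rT(2,8,8;N-2)$ and leaves the parity observation implicit, whereas you spell it out).
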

\begin{proof}
 Setting $k=2$, $r=1$, $\epsilon_1=\epsilon_2=\epsilon_3=1$, $g=h=u=v=1$ and  $i=j=2$ in Theorem \ref{MainThm3}, we obtain
\begin{equation*}   \varphi^{2}  \left( q \right)  \varphi  \left( {q}^{2}
 \right) =\varphi  \left( {q}^{2} \right)   \varphi^{2}  \left( {q}^
{4} \right)  +4\,q\psi \left( {q}^{4} \right)   \psi^{2}
 \left( {q}^{2} \right) +4\,{q}^{2}\varphi  \left( {q}^{2
} \right)  \psi^{2} \left( {q}^{8} \right),
\end{equation*}
from which, we deduce
\begin{equation*}
  r(1,1,2;N) =  r(2,4,4;N) +  4 \, T(2,2,4;N-1)  +  4\,  rT(2,8,8;N-2).
\end{equation*}
Thus, using the above identity,  we obtain the desired result.
\end{proof}
\begin{remark}  Theorem \ref{Athm3}, can be written in the following form:
 For each non-negative integer $N$, we have
 \begin{align*}
  r(1,1,2;2N) =&r(1,2,2;N)+4\,rT(1,4,4;N-1),\\
  r(1,1,2;2N+1) =& 4 \, T(1,1,2;N).
 \end{align*}

\end{remark}
Using the above theorem and the fact that, any non-negative integer can be written in the form $t_l+t_m+2\,t_n$ which is due to Liouville, we deduce the following result, which is due to L. Panaitopol \cite{Panaitopol}:
 \begin{corollary}
Any positive odd integer can be written as $l^2+m^2+2\,n^2$ with $l,m,n \in \mathbb{Z}$.
\end{corollary}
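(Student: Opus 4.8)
The plan is to derive the corollary directly from the odd branch of Theorem~\ref{Athm3} together with the classical Liouville representation recalled just above the statement. First I would specialize to a positive odd integer $N$, so that $N\equiv 1\pmod 2$ and Theorem~\ref{Athm3} yields
\[
r(1,1,2;N)=4\,T(2,2,4;N-1).
\]
Since the claim that $N=l^2+m^2+2\,n^2$ for some $l,m,n\in\mathbb{Z}$ is precisely the assertion $r(1,1,2;N)\neq 0$, it suffices to prove that $T(2,2,4;N-1)\neq 0$.

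Next I would unwind the definition of $T$. By definition, $T(2,2,4;N-1)$ counts the triples $(l,m,n)\in\mathbb{N}_0^3$ with $N-1=2\,t_l+2\,t_m+4\,t_n$. As $N$ is odd, $N-1$ is even, so $(N-1)/2$ is a non-negative integer, and dividing the defining equation by $2$ shows that $T(2,2,4;N-1)$ equals the number of representations of $(N-1)/2$ in the form $t_l+t_m+2\,t_n$.

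Finally I would invoke Liouville's theorem. The coefficient vector $(1,1,2)$ appears in Liouville's admissible list recorded in the introduction, so every non-negative integer --- in particular $(N-1)/2$ --- can be written as $t_l+t_m+2\,t_n$. Hence $T(2,2,4;N-1)\geq 1$, which forces $r(1,1,2;N)\geq 4>0$, and therefore $N$ admits a representation $l^2+m^2+2\,n^2$ with $l,m,n\in\mathbb{Z}$.

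As for difficulty, there is essentially no obstacle once Theorem~\ref{Athm3} is in hand: the argument is a short deduction. The only points requiring care are the bookkeeping of the scaling factor (halving the triangular-number identity so that $2\,t_l+2\,t_m+4\,t_n=N-1$ becomes $t_l+t_m+2\,t_n=(N-1)/2$) and the verification that the vector $(1,1,2)$ genuinely lies in Liouville's list, both of which are routine.
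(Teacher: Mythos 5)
Your proof is correct and follows exactly the route the paper takes: specialize the odd branch of Theorem~\ref{Athm3} to get $r(1,1,2;N)=4\,T(2,2,4;N-1)$, halve the triangular-number equation to reduce to representations of $(N-1)/2$ as $t_l+t_m+2\,t_n$, and invoke Liouville's list (which contains $(1,1,2)$) to conclude nonvanishing. The paper compresses this into a single sentence; you have merely made the same deduction explicit.
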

\begin{theorem}\label{Athm4}
For each non-negative integer $N$, we have
\begin{align}
 T(1,1,2;4\,N)=&Rt(1,2,1;N),  \label{Athm6.1}\\
 T(1,1,2;4\,N+1)=&2\,rT(2,1,2;N),\label{Athm6.2}\\
   T(1,1,2;4\,N+2)=&2\,rT(1,1,4;N),\label{Athm6.3}\\
      T(1,1,2;4\,N+3)=&4\,T(1,2,4;N),\label{Athm6.4}\\
  r(1,2,4;2\,N) =& r(1,2,2;N),\label{Athm6.5}\\
  r(1,2,4;2\,N+1) =& 2 \, T(1,1,2;N)\label{Athm6.6}.
\end{align}
 \end{theorem}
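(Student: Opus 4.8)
The plan is to prove the six identities in two groups, each of which reduces to a single theta‑function identity coming from Theorem \ref{MainThm3} together with the dissection tools recorded in Section \ref{Preliminary}, after which the results are read off coefficientwise by residue class. Throughout I rely on the generating functions, all immediate from \eqref{2.1} and the series in \eqref{2.8}: $\sum_N T(1,1,2;N)q^N=\psi^2(q)\psi(q^2)$, $\sum_N r(1,2,4;N)q^N=\varphi(q)\varphi(q^2)\varphi(q^4)$, $Rt(1,2,1;\cdot)\leftrightarrow\varphi(q)\varphi(q^2)\psi(q)$, $rT(2,1,2;\cdot)\leftrightarrow\varphi(q^2)\psi(q)\psi(q^2)$, $rT(1,1,4;\cdot)\leftrightarrow\varphi(q)\psi(q)\psi(q^4)$, $T(1,2,4;\cdot)\leftrightarrow\psi(q)\psi(q^2)\psi(q^4)$ and $r(1,2,2;\cdot)\leftrightarrow\varphi(q)\varphi^2(q^2)$.

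For the first four identities \eqref{Athm6.1}--\eqref{Athm6.4} I would specialize Theorem \ref{MainThm3} exactly as in Corollary \ref{cor1}, taking $k=2$, $r=1$, $\epsilon_1=\epsilon_2=\epsilon_3=1$, $g=u=1$, $h=v=0$, $i=2$, $j=0$; using $f(q^a,1)=2\psi(q^a)$ and simplifying, this collapses to the base identity $\psi^2(q)\psi(q^2)=\varphi(q)\varphi(q^2)\psi(q^4)$ (equivalently obtained from $\psi^2(q)=\varphi(q)\psi(q^2)$, a consequence of \eqref{2.10P}). I then dissect the right side modulo $4$ by substituting both $\varphi(q)=\varphi(q^4)+2q\psi(q^8)$ and $\varphi(q^2)=\varphi(q^8)+2q^2\psi(q^{16})$ from \eqref{varphi=} and expanding; the product splits into the four pieces $\varphi(q^4)\varphi(q^8)\psi(q^4)$, $2q\,\varphi(q^8)\psi(q^8)\psi(q^4)$, $2q^2\varphi(q^4)\psi(q^4)\psi(q^{16})$ and $4q^3\psi(q^4)\psi(q^8)\psi(q^{16})$, carrying exponents $\equiv 0,1,2,3\pmod 4$ respectively. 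Extracting the coefficient of $q^{4N+\rho}$ from each piece, replacing $q^4$ by $q$, and matching against the generating functions above yields \eqref{Athm6.1}--\eqref{Athm6.4} in order.

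For \eqref{Athm6.5}--\eqref{Athm6.6} I would dissect $\varphi(q)\varphi(q^2)\varphi(q^4)$ modulo $2$. Applying \eqref{varphi=} to the first factor alone gives $\varphi(q)\varphi(q^2)\varphi(q^4)=\varphi(q^2)\varphi^2(q^4)+2q\,\psi(q^8)\varphi(q^2)\varphi(q^4)$, where the first term contains only even powers and the second only odd powers. The even part matches $\varphi(q^2)\varphi^2(q^4)\leftrightarrow r(1,2,2;\cdot)$ after replacing $q^2$ by $q$, giving \eqref{Athm6.5}. For the odd part I would rewrite $\psi(q^8)\varphi(q^2)\varphi(q^4)$ as $\psi^2(q^2)\psi(q^4)$ via the consequences $\psi^2(q^2)=\varphi(q^2)\psi(q^4)$ and $\psi^2(q^4)=\varphi(q^4)\psi(q^8)$ of \eqref{2.10P}; extracting the coefficient of $q^{2N+1}$ and matching $\psi^2(q^2)\psi(q^4)\leftrightarrow T(1,1,2;\cdot)$ gives \eqref{Athm6.6}.

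The bulk of the work is routine bookkeeping: tracking exponents and performing the substitutions $q^4\mapsto q$ and $q^2\mapsto q$ when passing from coefficients of $q^{4N+\rho}$ (resp. $q^{2N+\rho}$) to coefficients of $q^N$. The two steps genuinely needing care are (i) verifying that the double application of \eqref{varphi=} in the first group produces exactly four terms cleanly segregated by residue modulo $4$ with the correct powers of $q$ pulled out in front, and (ii) confirming the theta simplification $\varphi(q^2)\varphi(q^4)\psi(q^8)=\psi^2(q^2)\psi(q^4)$ used for the odd part of the second group. These are precisely where a stray sign or power of two would corrupt the final congruences, so I would cross‑check each case against a small numerical value of $N$, as in the example following Theorem \ref{Athm1}.
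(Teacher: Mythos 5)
Your proof is correct and follows essentially the same route as the paper: both arguments hinge on the identical $4$-dissection $\psi^2(q)\psi(q^2)=\varphi(q^4)\psi(q^4)\varphi(q^8)+2q\,\varphi(q^8)\psi(q^4)\psi(q^8)+2q^2\varphi(q^4)\psi(q^4)\psi(q^{16})+4q^3\psi(q^4)\psi(q^8)\psi(q^{16})$ for the first four identities and on the identical $2$-dissection $\varphi(q)\varphi(q^2)\varphi(q^4)=\varphi(q^2)\varphi^2(q^4)+2q\,\psi^2(q^2)\psi(q^4)$ for the last two. The only cosmetic difference is that the paper extracts the $4$-dissection directly from Theorem \ref{MainThm3} with $g=u=3$, $h=v=1$, $i=6$, $j=2$, whereas you first collapse to $\varphi(q)\varphi(q^2)\psi(q^4)$ and then apply \eqref{varphi=} twice, arriving at the same four terms.
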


\begin{proof}
Setting $k=2$, $r=1$, $\epsilon_1=\epsilon_2=\epsilon_3=1$, $g=u=3$ $h=v=1$, $i=6$ and $j=2$ in Theorem \ref{MainThm3}, we obtain
\begin{align}\label{Athm5.1}
     \psi^2(q)\psi\left(q^2 \right)=& \varphi\left(q^4 \right) \psi\left(q^4 \right) \varphi\left(q^8 \right)+2\,q \varphi\left(q^8 \right) \psi\left(q^4 \right)\psi\left(q^8 \right)\nonumber \\&+2\,q^2 \varphi\left(q^4 \right) \psi\left(q^4 \right)\psi\left(q^{16} \right)+4\,q^3 \psi\left(q^4 \right) \psi\left(q^8 \right)\psi\left(q^{16} \right).
   \end{align}
   Extracting the terms in the above identity involving $q^{4\,N}$, we find that
   \begin{equation*}
     \sum_{N=0}^{\infty}T(1,1,2;4\,N)q^{4\,N}=\varphi\left(q^4 \right) \psi\left(q^4 \right) \varphi\left(q^8 \right),
   \end{equation*}
   replacing $q^4$ by $q$, and then equating the coefficients of $q^N$ on both sides of the above equality, we obtain  \eqref{Athm6.1}.\\
    Extracting the terms in \eqref{Athm5.1}  involving $q^{4\,N+1}$,  then dividing the resulting identity by $q$ and replacing $q^4$ by $q$, we have
      \begin{equation*}
     \sum_{N=0}^{\infty}T(1,1,2;4\,N+1)q^{N}=2\, \varphi\left(q^2 \right) \psi\left(q \right)\psi\left(q^2 \right)=2\sum_{N=0}^{\infty}rT(2,1,2;N)q^{N}.
   \end{equation*}
     Equating the coefficients of $q^N$ on both sides, we obtain \eqref{Athm6.2}. In a similar way, we deduce  \eqref{Athm6.3} and \eqref{Athm6.4}.
    Using \eqref{Athm5.1} and \eqref{varphi=}, we have
\begin{align*}
 \psi^2(q)\psi\left(q^2 \right)=& \varphi\left(q^4 \right) \psi\left(q^4 \right) \left\{\varphi\left(q^8 \right) +2\,q^2  \psi\left(q^{16} \right) \right\}  \\
&+2\,q \psi\left(q^8 \right) \psi\left(q^4 \right) \left\{\varphi\left(q^8 \right) +2\,q^2  \psi\left(q^{16} \right) \right\} \\
=&\varphi\left(q^2 \right) \psi\left(q^4 \right) \left\{\varphi\left(q^4 \right) +2\,q^2  \psi\left(q^{8} \right) \right\} \\
=&\varphi\left(q \right)\varphi\left(q^2 \right) \psi\left(q^4 \right),
\end{align*}
Now, changing $q$ to $q^2$ in the above identity, then  multiplying both sides of the resulting identity by $2\,q$ and applying \eqref{varphi=}, we find that
\begin{equation*}
  \varphi\left(q \right)  \varphi\left(q^2 \right)  \varphi\left(q^4 \right) =  \varphi\left(q^2 \right)  \varphi^2 \left(q^4 \right)   + 2\,q  \psi^2\left(q^2 \right) \psi\left(q^4 \right),
\end{equation*}
from which we obtain
\begin{equation*}
  r(1,2,4;N) =r(2,4,4;N)+2\, T(2,2,4;N-1),
\end{equation*}
which implies  \eqref{Athm6.5} and \eqref{Athm6.6}.
\end{proof}
\begin{theorem}\label{Athm7} For any non-negative integer $N$, we have
\begin{align}\label{Athm7.1}
 G(1,1,2;2N)=&Rg(3,6,2;N)+2\,rtp(3,12,4;N-1),\\
 G(1,1,2;2N+1)=&2\,Tp(3,6,1;N). \label{Athm7.2}
\end{align}
\end{theorem}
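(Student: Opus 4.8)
The plan is to specialize Theorem~\ref{MainThm3} so that its left-hand side becomes the generating function of $G(1,1,2;N)$, and then to split the resulting theta identity into its even and odd parts in $q$.

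Since $Y(q)=f(q,q^5)=\sum_{n\in\mathbb Z}q^{g_n}$, the product $Y^2(q)\,Y(q^2)$ is the generating function of $G(1,1,2;N)$. To create this product I would put $k=2$ and $r=1$, so that $r(k-r)=1$ and the constraint $2S_1=r(k-r)S_3$ reduces to $2S_1=S_3$; then I would take $\epsilon_1=\epsilon_2=\epsilon_3=1$, $g=u=1$, $h=v=5$, $i=2$ and $j=10$. This gives $S_1=S_2=6$, $S_3=12$, $D_1=D_2=-4$, $D_3=-8$, and all hypotheses of Theorem~\ref{MainThm3} hold. For $k=2$ the outer sum runs only over $\alpha\in\{0,1\}$ and each inner sum has the single term $\alpha=1$, so the right-hand side consists of exactly four products of three theta functions.

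After carrying out the substitution and simplifying, I expect the theta identity
\[
Y^2(q)\,Y(q^2)=\varphi(q^6)\varphi(q^{12})\,Y(q^4)+2q^2\,\varphi(q^6)\,\psi(q^{24})\,X(q^8)+2q\,\psi(q^6)\,\psi(q^{12})\,X(q^2).
\]
The first two terms carry only even powers of $q$, and upon replacing $q^2$ by $q$ they generate $Rg(3,6,2;N)$ and $2\,rtp(3,12,4;N-1)$, respectively; the last term carries only odd powers, and after dividing by $q$ and replacing $q^2$ by $q$ it generates $2\,Tp(3,6,1;N)$. Equating coefficients of $q^{2N}$ and $q^{2N+1}$ then yields \eqref{Athm7.1} and \eqref{Athm7.2}.

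The main obstacle is the simplification producing the displayed identity. Several theta factors emerge in the boundary shape $f(1,q^{a})$ (equivalently $f(q^{a},1)$), which \eqref{2.4} converts into $2f(q^a,q^{3a})=2\psi(q^a)$; this is how the common prefactor $2\psi(q^{12})$ of the two inner sums and the factor $2\psi(q^{24})$ in the outer sum arise. One factor instead surfaces with a negative exponent as $f(q^{-2},q^{26})$ and must be reduced by Lemma~\ref{l3} to $q^{-2}f(q^{2},q^{22})$. The two odd-power contributions then share the factor $2q\,\psi(q^6)\,\psi(q^{12})$, and their remaining theta parts combine as $f(q^{10},q^{14})+q^2f(q^{2},q^{22})$; recognizing this, via the $q\mapsto q^2$ instance of the dissection \eqref{X(q)=}, as $X(q^2)=\sum_{n}q^{2p_n}$ is the crucial step that makes the generalized pentagonal number appear on the right.
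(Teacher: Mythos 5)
Your proposal is correct and follows essentially the same route as the paper: the paper also specializes Theorem~\ref{MainThm3} with $k=2$, $r=1$, $\epsilon_1=\epsilon_2=\epsilon_3=1$ and the arguments $q^5,q^1$ and $q^{10},q^2$ (the same choice as yours up to the symmetry $f(a,b)=f(b,a)$), arrives at exactly the identity $Y^2(q)Y(q^2)=\varphi(q^6)\varphi(q^{12})Y(q^4)+2q\,\psi(q^6)\psi(q^{12})X(q^2)+2q^2\,\varphi(q^6)\psi(q^{24})X(q^8)$ via the dissection \eqref{X(q)=}, and then extracts the even and odd parts as you do. Your extra remarks on handling the $f(1,q^a)$ and $f(q^{-2},q^{26})$ factors via \eqref{2.4} and Lemma~\ref{l3} are consistent with the simplifications the paper leaves implicit.
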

\begin{proof} Setting $k=2$, $r=1$, $\epsilon_1=\epsilon_2=\epsilon_3=1$, $g=u=5$ $h=v=1$, $i=10$ and $j=2$ in Theorem \ref{MainThm3} and then using \eqref{X(q)=}, we obtain
\begin{equation}\label{Athm7.3}
  Y^2(q)Y(q^2)=\varphi(q^6)\varphi(q^{12})Y(q^4)+2\,q \psi(q^6)\psi(q^{12})X(q^2)+2\,q^2 \varphi(q^6)\psi(q^{24})X(q^8),
\end{equation}
Now, collecting the terms in \eqref{Athm7.3} that are  involving $q^{2\,N}$, and then replacing $q^2$ by $q$, obtain
\begin{align*}
  \sum_{N=0}^{\infty}G(1,1,2;2\,N)q^{N}&=\varphi(q^3)\varphi(q^{6})Y(q^2)+2\,q \varphi(q^3)\psi(q^{12})X(q^4)\\
  &= \sum_{N=0}^{\infty}  [ Rg(3,6,2;N)+2\,rtp(3,12,4;N-1) ]q^{N}.
\end{align*}
Equating the coefficients of $q^N$ on both sides of the above equality, we obtain \eqref{Athm7.1}. In a similar way, we deduce \eqref{Athm7.2}.
\end{proof}
\begin{theorem}\label{Athm9} For any non-negative integer $N$, we have
\begin{align*}
pG(4,1,1;2\,N)=&rtp(3,3,1;N),\\
pG(4,1,1;4\,N+1)=&2\, rtp(3,3,2;N)+4Tg(3,6,1;N-1),\\
pG(4,1,1;4\,N+3)=&0.
\end{align*}
\end{theorem}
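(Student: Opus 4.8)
The plan is to follow the same template used in the preceding application theorems (cf. the proofs of Theorems \ref{Athm7} and \ref{Athm2}): specialize the master identity \eqref{3.3} to obtain a clean theta identity whose left-hand side is the generating function for $pG(4,1,1;N)$, and then separate coefficients according to residue classes modulo $4$. Concretely, I would set $k=2$, $r=1$, $\epsilon_1=\epsilon_2=\epsilon_3=1$, $g=u=5$, $h=v=1$, $i=8$ and $j=4$ in Theorem \ref{MainThm3}. These satisfy the hypotheses, since $\gcd(4,1)=1$, $S_1=S_2=6$, $S_3=12$ and $2S_1=r(k-r)S_3=12$. Because $f(q^5,q^1)=Y(q)$ and $f(q^8,q^4)=\sum_n q^{4p_n}=X(q^4)$, the left side of \eqref{3.3} becomes $X(q^4)\,Y^2(q)=\sum_{N\ge 0}pG(4,1,1;N)\,q^N$.

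For $k=2$ the right side of \eqref{3.3} is a sum of four theta products, and the first reduction is to collapse it to three clean ones. The two terms from the first sum ($\alpha=0,1$) each factor as $\varphi(q^6)\psi(q^6)$ times a partial theta function, one of which carries the awkward argument $f(q^{30},q^{-6})$; by Lemma \ref{l3} (with $r=6$, $s=30$) this normalizes to $q^{-6}\psi(q^6)$, and \eqref{X(q)=} scaled by $q\mapsto q^2$ then recombines the partial thetas $f(q^{14},q^{10})$ and $q^2 f(q^{22},q^2)$ into the single product $\varphi(q^6)\psi(q^6)X(q^2)$. The two bracketed terms simplify after using \eqref{2.4} to evaluate factors such as $f(q^{12},1)=2\psi(q^{12})$ and $f(q^{24},1)=2\psi(q^{24})$. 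I expect the outcome to be
\begin{equation*}
X(q^4)\,Y^2(q)=\varphi(q^6)\psi(q^6)X(q^2)+2q\,\varphi(q^{12})\psi(q^{12})X(q^8)+4q^5\psi(q^{12})\psi(q^{24})Y(q^4).
\end{equation*}

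The remaining step is coefficient extraction. The first term has only even exponents, so taking the even part and replacing $q^2$ by $q$ gives $\varphi(q^3)\psi(q^3)X(q)=\sum_N rtp(3,3,1;N)q^N$, which is the first case. Both other terms have exponents $\equiv 1\pmod 4$ (all of $12a^2$, $12t_b$, $8p_c$, $24t_b$, $4g_c$ are $\equiv 0\pmod 4$, while the prefactors supply $q^1$ and $q^5$), so there is no contribution to $q^{4N+3}$, giving the vanishing case; dividing the $q^{4N+1}$ part by $q$ and replacing $q^4$ by $q$ turns these two terms into $2\varphi(q^3)\psi(q^3)X(q^2)+4q\,\psi(q^3)\psi(q^6)Y(q)=\sum_N\bigl[2\,rtp(3,3,2;N)+4\,Tg(3,6,1;N-1)\bigr]q^N$, where the factor $q$ accounts for the shift $N-1$. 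Equating coefficients of $q^N$ in each residue class then yields the three asserted relations.

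The main obstacle is the reduction in the second paragraph: one must correctly normalize the partial theta function carrying the negative exponent and verify that the four raw products of \eqref{3.3} recombine—via \eqref{X(q)=}—into exactly the three displayed terms, with the $q$-prefactors $q^0,q^1,q^5$ coming out precisely so that the three terms land in the residue classes $0\pmod 2$, $1\pmod 4$ and (vacuously) $3\pmod 4$. Once that identity is secured, the final bookkeeping is entirely routine.
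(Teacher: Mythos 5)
Your proposal is correct and follows essentially the same route as the paper: the authors also specialize Theorem \ref{MainThm3} with $k=2$, $r=1$, $\epsilon_1=\epsilon_2=\epsilon_3=1$, $g=u=5$, $h=v=1$, $i=8$, $j=4$, arrive at exactly the identity $X(q^4)Y^2(q)=\varphi(q^6)\psi(q^6)X(q^2)+2q\,\varphi(q^{12})\psi(q^{12})X(q^8)+4q^5\psi(q^{12})\psi(q^{24})Y(q^4)$, and extract coefficients by residue class modulo $4$. The only cosmetic difference is that you invoke Lemma \ref{l3} and \eqref{X(q)=} for the recombination of the two $\alpha$-terms, whereas the paper cites \eqref{P2.10} with $a=q^2$, $b=q^4$ --- which is the same identity in a different guise.
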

\begin{proof}
Using Theorem \ref{MainThm3}, with $k=2$, $r=1$, $\epsilon_1=\epsilon_2=\epsilon_3=1$, $g=u=5$ $h=v=1$, $i=8$ and $j=4$,
and then setting $a=q^2$ and $b=q^{4}$ in \eqref{P2.10},    we find that
\begin{equation*}
X(q^4)Y^2(q)= \varphi(q^{6})\psi(q^{6})X(q^2)+2\,q \varphi(q^{12})\psi(q^{12})X(q^8)+4\,q^5 \psi(q^{12})\psi(q^{24})Y(q^4).
\end{equation*}
Using the above identity, we deduce
 \begin{equation*}
 pG(4,1,1;N)=rtp(6,6,2;N)+2\, rtp(12,12,8;N-1)+4Tg(12,24,4;N-5),
 \end{equation*}
 which leads us to the desired results.
\end{proof}
Using Theorem \ref{Athm9}, we deduce the following result:
\begin{corollary}
Each non-negative integer of the form $4\,k+3, (k \in \mathbb{N}_0)$ cannot be represented in the form $4\,p_l+g_m+g_n$, $(l, m,n \in \mathbb{Z})$.
\end{corollary}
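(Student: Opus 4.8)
The plan is to deduce the corollary directly from the third identity of Theorem~\ref{Athm9}, with essentially no further computation. Recalling the definition
\[
pG(4,1,1;N) = \left| \left\{ (l,m,n) \in \mathbb{Z}^3 \mid N = 4\,p_l + g_m + g_n \right\} \right|,
\]
a non-negative integer $N$ is representable in the form $4\,p_l + g_m + g_n$ with $l,m,n \in \mathbb{Z}$ precisely when $pG(4,1,1;N) \geq 1$; equivalently, $N$ fails to be representable exactly when $pG(4,1,1;N) = 0$.

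First I would note that every integer of the shape $4k+3$ with $k \in \mathbb{N}_0$ is of the shape $4N+3$ for some $N \in \mathbb{N}_0$ (indeed take $N = k$), so that the hypothesis of the corollary lands exactly in the residue class treated by the last line of Theorem~\ref{Athm9}. Then, invoking that line, namely $pG(4,1,1;4N+3) = 0$, I conclude that the set of triples representing $4k+3$ is empty, which is precisely the claim that $4k+3$ cannot be written as $4\,p_l + g_m + g_n$.

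The hard part has already been discharged in Theorem~\ref{Athm9} itself: its proof specialises Theorem~\ref{MainThm3} to expand $X(q^4)Y^2(q)$ as a three-term combination of theta products and then sorts the coefficients by the residue class of the exponent modulo $4$, observing that the $4N+3$ class receives no contribution. Thus the only remaining work for the corollary is the routine translation between the vanishing of a representation-counting function and the non-existence of a representation, together with the trivial matching of the index $4k+3$ to $4N+3$. I therefore expect no genuine obstacle beyond this bookkeeping.
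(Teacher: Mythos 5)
Your proposal is correct and matches the paper exactly: the paper deduces this corollary immediately from the third identity of Theorem~\ref{Athm9}, namely $pG(4,1,1;4N+3)=0$, just as you do. The only content is the bookkeeping translation from the vanishing of the counting function to the non-existence of a representation, which you carry out correctly.
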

\begin{theorem}\label{Athm8} For any non-negative integer $N$, we have
\begin{align*}
tG(12,1,1;2\,N)&=rtg(3,6,1;N),\\
tG(12,1,1;4\,N+1)&=2\,tpg(3,2,1;N),\\
tG(12,1,1;4\,N+3)&=0.
\end{align*}
\end{theorem}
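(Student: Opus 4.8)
The plan is to imitate the proofs of Theorems \ref{Athm7} and \ref{Athm9}: specialize the master identity of Theorem \ref{MainThm3} so that its left-hand side becomes the generating function of $tG(12,1,1;N)$, reduce the right-hand side to a short combination of the named functions $\varphi,\psi,X,Y$, and then split that combination according to the residue of the exponent of $q$. Concretely, I would take $k=2$, $r=1$, $\epsilon_1=\epsilon_2=\epsilon_3=1$, $g=u=5$, $h=v=1$, $i=12$ and $j=0$. These satisfy all the hypotheses, since $\gcd(4,1)=1$, $S_1=S_2=6$, $S_3=12$ and $2S_1=r(k-r)S_3$. The first two factors then become $f(q^5,q)=Y(q)$, and the third is $f(q^{12},1)$, which by \eqref{2.4} equals $2\psi(q^{12})$; hence the left-hand side is $2\psi(q^{12})Y^2(q)$, i.e. twice $\sum_{N\ge0}tG(12,1,1;N)q^N$. (The choices $(i,j)=(10,2),(8,4),(12,0)$ are exactly the three splits of $S_3=12$ producing $Y(q^2)$, $X(q^4)$ and $2\psi(q^{12})$, so this is the natural triangular analogue of Theorems \ref{Athm7} and \ref{Athm9}.)

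Next I would simplify the right-hand side. Several of the theta functions produced by the formula appear in nonstandard shape, carrying a zero or negative exponent; each of these I would bring to standard form using Lemma \ref{l3}, and then combine the surviving products of two theta functions into single named functions via \eqref{2.10P}. The target of this computation is the clean three-term identity
\[
\psi(q^{12})Y^2(q)=\varphi(q^6)\psi(q^{12})Y(q^2)+2q\,\psi(q^{12})X(q^8)Y(q^4).
\]
In carrying this out, the two $\alpha$-terms of the principal sum should, after the reductions of Lemma \ref{l3}, become identical, and the resulting product of two theta functions should collapse through \eqref{2.10P} to $\varphi(q^6)\psi(q^{12})Y(q^2)$; meanwhile the two bracketed sums, after reducing a negative-exponent factor by Lemma \ref{l3}, should each yield $q\,X(q^8)Y(q^4)$, with the factor $f(q^{12},1)=2\psi(q^{12})$ supplying the overall $\psi(q^{12})$ and cancelling the $2$ on the left.

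With the displayed identity established, the three congruences follow by dissection. The first term $\varphi(q^6)\psi(q^{12})Y(q^2)$ contains only even powers of $q$; in the second term each of $\psi(q^{12})$, $X(q^8)$ and $Y(q^4)$ has all exponents divisible by $4$, so $2q\,\psi(q^{12})X(q^8)Y(q^4)$ contributes only powers $q^{4N+1}$. Reading off the even part and replacing $q^2$ by $q$ gives $tG(12,1,1;2N)=rtg(3,6,1;N)$, since $\varphi(q^3)\psi(q^6)Y(q)$ generates $rtg(3,6,1;\cdot)$; the absence of any power $q^{4N+3}$ gives $tG(12,1,1;4N+3)=0$; and reading off the $q^{4N+1}$ part, dividing by $q$ and replacing $q^4$ by $q$, gives $tG(12,1,1;4N+1)=2\,tpg(3,2,1;N)$, since $\psi(q^3)X(q^2)Y(q)$ generates $tpg(3,2,1;\cdot)$.

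The entire difficulty lies in this reduction step: the raw output of Theorem \ref{MainThm3} is a genuinely messy sum of products of theta functions in which zero and negative exponents occur, and the substance of the proof is to verify that Lemma \ref{l3} together with \eqref{2.10P} collapses it to exactly the three-term right-hand side above. The point to watch is that the second term must be brought to the form $2q\,\psi(q^{12})X(q^8)Y(q^4)$; although this equals $2q\,\psi^2(q^{12})X(q^4)$ by \eqref{2.10P}, it is the form $X(q^8)Y(q^4)$ that makes the odd part generate $tpg(3,2,1)$ after the substitution $q^4\mapsto q$. Once the displayed identity is secured, the dissection is entirely routine.
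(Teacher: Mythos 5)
Your proposal follows exactly the paper's route: the same specialization $k=2$, $r=1$, $g=u=5$, $h=v=1$, $i=12$, $j=0$ of Theorem \ref{MainThm3}, the same reduction (via Lemma \ref{l3} and \eqref{2.10P}) to the identity $\psi(q^{12})Y^2(q)=\varphi(q^{6})\psi(q^{12})Y(q^2)+2\,q\,\psi(q^{12})X(q^8)Y(q^4)$, and the same dissection by residues modulo $2$ and $4$; I verified that the two $\alpha$-terms of the principal sum do coincide and that each bracketed sum yields $q\,X(q^8)Y(q^4)$, as you predicted. The argument is correct and essentially identical to the paper's proof.
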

\begin{proof}
Using Theorem \ref{MainThm3}, with $k=2$, $r=1$, $\epsilon_1=\epsilon_2=\epsilon_3=1$, $g=u=5$ $h=v=1$, $i=12$ and $j=0$,
and then applying \eqref{2.10P} with $a=q^2$ and $b=q^{10}$, we find that
\begin{equation*}
  \psi(q^{12})Y^2(q)=\varphi(q^{6})\psi(q^{12})Y(q^2)+2\,q\psi(q^{12})X(q^8)Y(q^4).
\end{equation*}
Using the above identity, we deduce the desired results.
\end{proof}
The third equality in Theorem \ref{Athm8} implies the following result:
\begin{corollary}
Each non-negative integer of the form $4\,k+3, (k \in \mathbb{N}_0)$ cannot be represented in the form $12\,t_l+g_m+g_n$, $(l, m,n \in \mathbb{Z})$.
\end{corollary}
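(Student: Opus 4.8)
The plan is to obtain the corollary as an immediate reading of the third identity in Theorem~\ref{Athm8}, namely $tG(12,1,1;4N+3)=0$ for every $N\in\mathbb{N}_0$. First I would unwind the definition: $tG(12,1,1;M)$ is the number of triples $(l,m,n)\in\mathbb{N}_0\times\mathbb{Z}^2$ with $M=12\,t_l+g_m+g_n$, so the vanishing of this count for $M=4N+3$ says exactly that no such integer is representable as $12\,t_l+g_m+g_n$ with $l\in\mathbb{N}_0$ and $m,n\in\mathbb{Z}$. Since every non-negative integer of the form $4k+3$ is of the form $4N+3$ with $N\in\mathbb{N}_0$, this settles the nonnegative-index version at once.

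The one step I would take care to justify is the widening of the range of $l$ from $\mathbb{N}_0$ to $\mathbb{Z}$, as stated in the corollary. Here I would use the elementary symmetry $t_l=t_{-1-l}$, already recorded in the introduction, which shows that $l\mapsto -1-l$ is an involution of $\mathbb{Z}$ carrying every triangular index to its nonnegative representative while fixing the value $t_l$. Consequently any representation $M=12\,t_l+g_m+g_n$ with $l\in\mathbb{Z}$ can be converted, by replacing $l$ with $-1-l$ when $l<0$, into one with $l\in\mathbb{N}_0$ and the same $m,n$; so a representation over $\mathbb{Z}^3$ exists if and only if one counted by $tG(12,1,1;M)$ exists. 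Combining this with the vanishing above yields the claim that $4N+3$ admits no representation of the form $12\,t_l+g_m+g_n$ with $l,m,n\in\mathbb{Z}$.

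There is no genuine analytic obstacle in the corollary itself: all the substance sits in Theorem~\ref{Athm8}, which I am assuming. For completeness I would recall the mechanism behind its third identity, since that is what forces the vanishing: in the governing relation $\psi(q^{12})Y^2(q)=\varphi(q^6)\psi(q^{12})Y(q^2)+2q\,\psi(q^{12})X(q^8)Y(q^4)$, the first term on the right carries only even exponents and the second only exponents $\equiv 1\pmod 4$, because $12\,t_l,\ 8\,p_m,\ 4\,g_n$ are all divisible by $4$ and the prefactor $q$ shifts the residue to $1$. Hence the coefficient of $q^{4N+3}$ on the left vanishes, that is, $tG(12,1,1;4N+3)=0$. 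The only real bookkeeping in the present corollary is the index-range reconciliation of the second paragraph, and that is the step I would be sure to spell out.
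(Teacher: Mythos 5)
Your proposal is correct and follows the paper's own route exactly: the corollary is read off from the vanishing identity $tG(12,1,1;4N+3)=0$ in Theorem~\ref{Athm8}, and your supporting remarks (the residue analysis of the exponents in $\psi(q^{12})Y^2(q)=\varphi(q^6)\psi(q^{12})Y(q^2)+2q\,\psi(q^{12})X(q^8)Y(q^4)$, and the use of $t_l=t_{-1-l}$ to pass from $l\in\mathbb{N}_0$ to $l\in\mathbb{Z}$) are both sound. The paper states the deduction without spelling out the index-range reconciliation, so your second paragraph only makes explicit what the authors leave implicit.
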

\begin{theorem}\label{Athm10} For any non-negative integer $N$, we have
\begin{align*}
Rg(3,3,2;2\,N)&=rP(3,4,4;N)+rG(3,2,2;N-1),\\
Rg(3,3,2;2\,N+1)&=4\,Tg(6,6,1;N-1).
\end{align*}
\end{theorem}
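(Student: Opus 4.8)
The plan is to specialize Theorem \ref{MainThm3} so that its left-hand side becomes the generating function of $Rg(3,3,2;N)$. Since $Rg(3,3,2;N)$ counts solutions of $N=3l^2+3m^2+2g_n$, that generating function is $\varphi^2(q^3)Y(q^2)$. Writing $\varphi(q^3)=f(q^3,q^3)$ and $Y(q^2)=f(q^2,q^{10})$, I would set $k=2$, $r=1$, $\epsilon_1=\epsilon_2=\epsilon_3=1$, $g=u=3$, $h=v=3$, $i=10$ and $j=2$. Then $S_1=S_2=6$, $D_1=D_2=0$, $S_3=12$, $D_3=8$, and one verifies $2S_1=r(k-r)S_3$ together with $\gcd(2k,r)=\gcd(2k,k-r)=1$, so the hypotheses of Theorem \ref{MainThm3} are met.

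Next I would reduce the right-hand side. Because $k=2$, the first $\alpha$-sum runs only over $\alpha\in\{0,1\}$ and each bracketed $\alpha$-sum consists of the single term $\alpha=1$, so the expansion is short. The $\alpha=0$ term of the first sum is $\varphi(q^6)X^2(q^8)$, while the $\alpha=1$ term throws up a theta function with a negative index, $f(q^{28},q^{-4})$, which Lemma \ref{l3} rewrites as $q^{-4}Y(q^4)$; after collecting the prefactor this contributes $q^2\varphi(q^6)Y^2(q^4)$. In the bracketed part, the leading factor is $f(q^{12},1)$, which equals $2\psi(q^{12})$ by \eqref{2.4}, and the two surviving inner terms coincide, each being $q^3f(q^2,q^{22})f(q^{10},q^{14})$. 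The decisive step is to collapse this product of two theta functions: applying \eqref{2.10P} with $a=q^2$ and $b=q^{10}$ gives $f(q^2,q^{22})f(q^{10},q^{14})=f(q^2,q^{10})\psi(q^{12})=Y(q^2)\psi(q^{12})$. Combining all contributions (the factor $4$ coming from the two equal inner terms together with $f(q^{12},1)=2\psi(q^{12})$) yields
\begin{equation*}
\varphi^2(q^3)Y(q^2)=\varphi(q^6)X^2(q^8)+q^2\varphi(q^6)Y^2(q^4)+4q^3\psi^2(q^{12})Y(q^2).
\end{equation*}

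Finally I would split this identity by the parity of the exponent. Every exponent in $\varphi(q^6)X^2(q^8)$ and in $q^2\varphi(q^6)Y^2(q^4)$ is even, whereas every exponent in $4q^3\psi^2(q^{12})Y(q^2)$ is odd, so the two pieces do not interfere. Extracting the even part and replacing $q^2$ by $q$ turns it into $\varphi(q^3)X^2(q^4)+q\,\varphi(q^3)Y^2(q^2)$; reading off coefficients through the definitions of $Rg$, $rP$ and $rG$ gives $Rg(3,3,2;2N)=rP(3,4,4;N)+rG(3,2,2;N-1)$. Extracting the odd part, dividing by $q$ and replacing $q^2$ by $q$ turns it into $4q\,\psi^2(q^6)Y(q)$, and comparing coefficients with the generating function $\psi^2(q^6)Y(q)$ of $Tg(6,6,1;\cdot)$ produces $Rg(3,3,2;2N+1)=4\,Tg(6,6,1;N-1)$.

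The main obstacle lies in the middle step: recognizing that the residual product $f(q^2,q^{22})f(q^{10},q^{14})$ is exactly the pattern $f(a,ab^2)f(b,a^2b)$ of \eqref{2.10P}, which is what reduces it to the single factor $\psi^2(q^{12})Y(q^2)$ needed for a clean parity separation; taming the negative-index theta $f(q^{28},q^{-4})$ via Lemma \ref{l3} is the other bookkeeping point. Once the identity is brought to the displayed form, the parity split and the coefficient comparisons are routine.
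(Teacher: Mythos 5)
Your proposal is correct and follows essentially the same route as the paper: the same specialization of Theorem \ref{MainThm3} ($k=2$, $r=1$, $g=h=u=v=3$, $i=10$, $j=2$), the same use of \eqref{2.10P} with $a=q^2$, $b=q^{10}$ to collapse $f(q^2,q^{22})f(q^{10},q^{14})$, arriving at the identity $\varphi^2(q^3)Y(q^2)=\varphi(q^6)X^2(q^8)+q^2\varphi(q^6)Y^2(q^4)+4q^3\psi^2(q^{12})Y(q^2)$, followed by the parity split. You simply make explicit the coefficient extraction that the paper leaves implicit.
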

\begin{proof}Setting $k=2$, $r=1$, $\epsilon_1=\epsilon_2=\epsilon_3=1$, $g=u=h=v=3$, $i=10$ and $j=2$ in Theorem \ref{MainThm3} and then applying \eqref{2.10P} with $a=q^2$ and $b=q^{10}$, in the resulting identity, we obtain
\begin{equation}\label{Athm10.1}
  \varphi^2(q^3)Y(q^2)=\varphi(q^6)X^2(q^8)+q^2\varphi(q^6)Y^2(q^4)+4\,q^3\psi^2(q^{12})Y(q^2).
\end{equation}
Using \eqref{Athm10.1}, we deduce the required results.
\end{proof}
\begin{theorem}\label{Athm11} For any non-negative integer $N$, we have
\begin{align*}
Rp(3,3,4;2\,N)&=rP(3,1,1;N)-2rtg(3,6,1;N-1),\\
Rp(3,3,2;4\,N+3)&=4\,tpg(3,2,1;N),\\
Rp(3,3,2;4\,N+1)&=0.
\end{align*}
\end{theorem}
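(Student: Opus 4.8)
The plan is to specialise Theorem~\ref{MainThm3} to a single choice of parameters and then read off coefficients in each residue class modulo $4$, exactly in the spirit of the proofs of Theorems~\ref{Athm7}--\ref{Athm10}. Concretely, I would set $k=2$, $r=1$, $\epsilon_1=\epsilon_2=\epsilon_3=1$, $g=u=h=v=3$, $i=8$ and $j=4$. The hypotheses are met: $\gcd(4,1)=1$, $S_1=S_2=6$, $S_3=12$ and $2S_1=12=r(k-r)S_3$. With these values the left-hand side of \eqref{3.3} collapses to $f(q^3,q^3)^2f(q^8,q^4)=\varphi^2(q^3)X(q^4)$, whose coefficient of $q^N$ is $Rp(3,3,4;N)$; this is the generating function I want to dissect.

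The second step is to simplify the right-hand side. The non-bracketed sum runs over $\alpha\in\{0,1\}$ and produces $\varphi(q^6)\bigl[f(q^{10},q^{14})^2+q^8 f(q^{26},q^{-2})f(q^{-2},q^{26})\bigr]$. The genuinely new move is normalising the negative exponent: Lemma~\ref{l3} gives $f(q^{-2},q^{26})=q^{-2}f(q^{22},q^2)$, so the $\alpha=1$ summand becomes $q^4 f(q^{22},q^2)^2$. In the bracketed double sum the two inner sums coincide term by term (using $f(a,b)=f(b,a)$, together with $f(q^4,q^{20})=Y(q^4)$ and $f(q^8,q^{16})=X(q^8)$), collapsing the whole bracket to $4q^3\psi(q^{12})X(q^8)Y(q^4)$. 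Thus Theorem~\ref{MainThm3} yields
\[
\varphi^2(q^3)X(q^4)=\varphi(q^6)\bigl[f(q^{10},q^{14})^2+q^4 f(q^{22},q^2)^2\bigr]+4q^3\psi(q^{12})X(q^8)Y(q^4).
\]

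The decisive observation is that the first bracket contains only even powers of $q$, while the last term contains only powers $\equiv 3\pmod 4$; hence no power $\equiv 1\pmod 4$ occurs, which immediately gives $Rp(3,3,4;4N+1)=0$. Extracting the terms $\equiv 3\pmod 4$, dividing by $q^3$ and replacing $q^4$ by $q$ leaves $4\psi(q^3)X(q^2)Y(q)$, whose $q^N$-coefficient is $4\,tpg(3,2,1;N)$, giving the relation on $4N+3$. For the even part I would halve every exponent (replace $q^2$ by $q$), reaching $\varphi(q^3)\bigl[f(q^5,q^7)^2+q^2 f(q^{11},q)^2\bigr]$. The last nontrivial input is the identity $f(q^5,q^7)^2+q^2 f(q^{11},q)^2=X^2(q)-2q\,\psi(q^6)Y(q)$: using \eqref{X(q)=} in the form $f(q^5,q^7)=X(q)-q f(q^{11},q)$ reduces it to $f(q,q^{11})f(q^5,q^7)=\psi(q^6)Y(q)$, which is precisely \eqref{2.10P} with $a=q$, $b=q^5$. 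This identifies the even part as $\varphi(q^3)X^2(q)-2q\,\varphi(q^3)\psi(q^6)Y(q)$ and produces the relation on $2N$ linking $Rp(3,3,4;2N)$, $rP(3,1,1;N)$ and $rtg(3,6,1;N-1)$.

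I expect the principal obstacle to be the bookkeeping in reducing the right-hand side of \eqref{3.3}: spotting that the factor $f(q^{26},q^{-2})$ must first be put in standard form through Lemma~\ref{l3}, and recognising the closed forms of the various $f(q^a,q^b)$ as $X$'s and $Y$'s. Once those normalisations are in place, the only two nonroutine algebraic facts are the symmetric collapse of the bracketed double sum and the single application of \eqref{2.10P}; the remainder is a mechanical extraction of coefficients in the three residue classes modulo~$4$.
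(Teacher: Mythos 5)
Your proposal is correct and follows essentially the same route as the paper: the same specialization $k=2$, $r=1$, $g=u=h=v=3$, $i=8$, $j=4$ of Theorem~\ref{MainThm3}, followed by the dissection $X=f(q^5,q^7)+qf(q^{11},q)$ (which is \eqref{P2.10}) and the product identity \eqref{2.10P} to convert the even part into $\varphi(q^6)X^2(q^2)-2q^2\varphi(q^6)\psi(q^{12})Y(q^2)$, then extraction modulo $4$; the paper merely performs these two steps at base $q^2$ rather than after halving exponents. Your reading also implicitly corrects the paper's typo, since the second and third identities of the theorem should refer to $Rp(3,3,4;\cdot)$, as your derivation (and the subsequent corollary in the paper) confirms.
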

\begin{proof}
Setting $k=2$, $r=1$, $\epsilon_1=\epsilon_2=\epsilon_3=1$, $g=u=h=v=3$, $i=8$ and $j=4$ in Theorem \ref{MainThm3}, then applying \eqref{P2.10} with $a=q^2$ and $b=q^{4}$, by squaring both sides, and finally using \eqref{2.10P} with $a=q^2$ and $b=q^{10}$, in the resulting identity, we obtain
\begin{equation*}
  \varphi^2(q^3)X(q^4)=\varphi(q^6)X^2(q^2)-2\,q^2\varphi(q^6)\psi(q^{12})Y(q^2)+4\,q^3\psi(q^{12})X(q^8)Y(q^4),
\end{equation*}
from which we obtain the desired results.
\end{proof}
The third equality in Theorem \ref{Athm11} implies the following result:
\begin{corollary}
Each non-negative integer of the form $4\,k+1, (k \in \mathbb{N}_0)$ cannot be represented in the form $3\,l^2+3\,m^2+4\,p_n$, $(l, m,n \in \mathbb{Z})$.
\end{corollary}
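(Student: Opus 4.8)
The plan is to obtain the corollary as the arithmetic reading of the vanishing representation count furnished by Theorem~\ref{Athm11}. By definition, $Rp(3,3,4;M)$ is the number of triples $(l,m,n)\in\mathbb{Z}^3$ with $M=3l^2+3m^2+4p_n$, so the claim that no integer $\equiv 1\pmod 4$ can be written as $3l^2+3m^2+4p_n$ is exactly the assertion that $Rp(3,3,4;4N+1)=0$ for every $N\in\mathbb{N}_0$. Thus the whole task reduces to establishing this single vanishing and then observing that a nonnegative counting function equals zero precisely when the set it counts is empty; this is the third equality of Theorem~\ref{Athm11}, so the corollary follows immediately. For completeness I would indicate why that equality holds.

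To produce the vanishing, I would start from the generating-function identity proved for Theorem~\ref{Athm11},
\[
\varphi^2(q^3)X(q^4)=\varphi(q^6)X^2(q^2)-2q^2\varphi(q^6)\psi(q^{12})Y(q^2)+4q^3\psi(q^{12})X(q^8)Y(q^4).
\]
Since $\varphi(q^3)=\sum_l q^{3l^2}$ and $X(q^4)=\sum_n q^{4p_n}$, the left-hand side is precisely $\sum_{M\ge 0}Rp(3,3,4;M)\,q^M$, so it suffices to show that the coefficient of $q^{4N+1}$ on the right-hand side is $0$. First I would record the residue modulo $4$ of the exponents contributed by each product: in $\varphi(q^6)X^2(q^2)$ every exponent has the shape $6a^2+2p_b+2p_c$ and is therefore even; in $2q^2\varphi(q^6)\psi(q^{12})Y(q^2)$ every exponent equals $2+6a^2+12t_b+2g_c$ and is again even; and in $4q^3\psi(q^{12})X(q^8)Y(q^4)$ every exponent equals $3+12t_a+8p_b+4g_c\equiv 3\pmod 4$.

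Collecting these three observations, no term on the right-hand side can contribute a monomial $q^{4N+1}$, so the coefficient of $q^{4N+1}$ on the left is $0$; equating coefficients then gives $Rp(3,3,4;4N+1)=0$, and the corollary follows from the first paragraph. The step demanding the most care is this exponent bookkeeping, that is, correctly expanding $X(q^2)$, $X(q^8)$, $Y(q^2)$, $Y(q^4)$ and $\psi(q^{12})$ through the definitions $p_x=x(3x+1)/2$, $g_x=x(3x+2)$ and $t_x=x(x+1)/2$, and then checking that each of the three products lands in a residue class modulo $4$ different from $1$. Beyond this routine verification there is no genuine obstacle, since the analytic content was already discharged in establishing the underlying three-theta identity of Theorem~\ref{MainThm3} and its specialization in Theorem~\ref{Athm11}.
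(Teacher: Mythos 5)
Your proof is correct and follows the paper's own route: the corollary is precisely the statement $Rp(3,3,4;4N+1)=0$, which the paper extracts (as the third line of Theorem~\ref{Athm11}, where the printed $Rp(3,3,2;\cdot)$ is evidently a typo for $Rp(3,3,4;\cdot)$) from the same identity $\varphi^2(q^3)X(q^4)=\varphi(q^6)X^2(q^2)-2q^2\varphi(q^6)\psi(q^{12})Y(q^2)+4q^3\psi(q^{12})X(q^8)Y(q^4)$ by exactly the mod-$4$ exponent bookkeeping you carry out. Nothing further is needed.
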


\begin{theorem}\label{Athm12} For any non-negative integer $N$, we have
\begin{align*}
  pG(2,1,1;6\,N)&=rP(1,1,2;N),\\
  pG(2,1,1;6\,N+1)&=2\,rtp(3,2,1;N), \\
   pG(2,1,1;6\,N+2)&=2\,rtp(1,3,2;N),\\
   pG(2,1,1;6\,N+3)&=2\,tpg(2,1,1;N),\\
   pG(2,1,1;6\,N+4)&=2\,rtp(1,6,1;N),\\
   pG(2,1,1;6\,N+5)&=4\,Tg(2,3,1;N).
\end{align*}
\end{theorem}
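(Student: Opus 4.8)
The plan is to realize the generating function of $pG(2,1,1;N)$ as the left-hand side of Theorem \ref{MainThm3} for a single well-chosen set of parameters, and then read the six congruence classes directly off the six terms on the right. Since $\sum_{n}q^{2p_n}=X(q^2)$ and $\sum_{n}q^{g_n}=Y(q)$, we have $\sum_{N\ge 0}pG(2,1,1;N)\,q^{N}=X(q^2)\,Y^2(q)$. I would therefore apply Theorem \ref{MainThm3} with $k=3$, $r=2$ (so that $\gcd(2k,r)=2$ and $\gcd(2k,k-r)=1$, placing us in the second case), $\epsilon_1=\epsilon_2=\epsilon_3=1$, and $g=u=5$, $h=v=1$, $i=4$, $j=2$. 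With these values $S_1=S_2=S_3=6$, $D_1=D_2=4$, $D_3=2$, and the constraint $2S_1=r(k-r)S_3$ holds ($12=12$); the left-hand side becomes $f(q^5,q)\,f(q^5,q)\,f(q^4,q^2)=Y^2(q)\,X(q^2)$, which is exactly the desired generating function.

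Because every $\epsilon_i=1$, all $\delta_i=0$, so each $f_{\delta}$ collapses to an ordinary $f$ and every sign $(-1)^{(\cdots)}$ equals $+1$; this strips away essentially all of the combinatorial clutter on the right-hand side. Moreover $k=3$ makes the first sum run over $\alpha\in\{-1,0,1\}$ and the two bracketed sums over $\alpha\in\{1,2\}$ and $\alpha=1$, so the right-hand side is a sum of exactly $2k=6$ products of three theta functions. The next step is to evaluate the exponents in each of these six terms. A short computation shows that each theta factor is a function of $q^{6}$ of one of the shapes $\varphi(q^{6})$, $\varphi(q^{18})$, $X(q^{6})$, $X(q^{12})$, $Y(q^{6})$, or, after using $f(a,1)=2f(a,a^3)$ from \eqref{2.4}, one of $2\psi(q^{12})$, $2\psi(q^{18})$, $2\psi(q^{36})$. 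The monomial prefactors contribute $q^{0},q^{2},q^{4}$ from the three terms of the first sum and $q^{5},q^{9},q^{1}$ from the bracketed sums, and since $9\equiv 3\pmod 6$ the six terms land in six distinct residue classes modulo $6$.

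With this term-by-term $6$-dissection in hand, I would extract the part of $X(q^2)Y^2(q)$ supported on exponents $\equiv j\pmod 6$ for each $j=0,1,\dots,5$, divide by $q^{j}$, and replace $q^{6}$ by $q$. For instance the $\alpha=0$ term of the first sum gives $\varphi(q^{6})X(q^{12})X(q^{6})\mapsto \varphi(q)X(q^2)X(q)$, the generating function of $rP(1,1,2;N)$, yielding $pG(2,1,1;6N)=rP(1,1,2;N)$; the $\alpha=1$ term of the second bracketed sum gives $2\psi(q^{2})\varphi(q^{3})X(q)$, the generating function of $2\,rtp(3,2,1;N)$; and the remaining four classes produce $2\,rtp(1,3,2;N)$, $2\,tpg(2,1,1;N)$, $2\,rtp(1,6,1;N)$ and $4\,Tg(2,3,1;N)$. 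Equating coefficients of $q^{N}$ then gives all six identities.

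The main obstacle is purely computational bookkeeping: correctly evaluating the several exponents in each of the six terms of Theorem \ref{MainThm3}, handling the degenerate factors $f(q^{a},1)$ via \eqref{2.4}, and, crucially, normalizing the one factor that emerges with a negative exponent. Indeed, the $\alpha=2$ term of the first bracketed sum produces $f(q^{42},q^{-6})$, which requires Lemma \ref{l3} to be rewritten as $q^{-6}f(q^{30},q^{6})=q^{-6}Y(q^{6})$ before its contributing power of $q$ can be read off. Once these normalizations are carried out, the only real content is checking that each term is a clean power of $q$ times a function of $q^{6}$ and that the six powers exhaust the residues modulo $6$; the identification of each dissected piece with the stated representation-number generating function is then immediate from the definitions of $\varphi$, $\psi$, $X$ and $Y$.
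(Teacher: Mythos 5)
Your proposal is correct and follows exactly the paper's route: the same specialization $k=3$, $r=2$, $\epsilon_1=\epsilon_2=\epsilon_3=1$, $g=u=5$, $h=v=1$, $i=4$, $j=2$ of Theorem \ref{MainThm3}, yielding the six-term identity for $X(q^2)Y^2(q)$ (the paper's \eqref{Athm12.11}) whose terms land in the six residue classes modulo $6$, from which the six statements follow by dissection. Your explicit handling of the degenerate factor via \eqref{2.4} and of $f(q^{42},q^{-6})$ via Lemma \ref{l3}, and the identification of each dissected piece with the stated counting functions, are all consistent with the identity the paper records.
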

\begin{proof} Setting $k=3,\quad r=2, \quad \epsilon_1=\epsilon_2=\epsilon_3=1, \quad g=u=5, \quad h=v=1, \quad i=4$ and $j=2$ in Theorem \ref{MainThm3}, we obtain
\begin{align}
 \nonumber X(q^2)&Y^2(q)=\varphi(q^6)X(q^6)X(q^{12})+2\,q\varphi(q^{18})\psi(q^{12})X(q^6)+2\,q^2\varphi(q^{6})\psi(q^{18})X(q^{12})\\&+2\,q^3\psi(q^{12})X(q^6)Y(q^6)
+2\,q^4\varphi(q^{6})\psi(q^{36})X(q^{6})+4\,q^5\psi(q^{18})  \psi(q^{12}) Y(q^6). \label{Athm12.11}
\end{align}
Using  identity \eqref{Athm12.11}, we deduce the asserted  results.
\end{proof}
The proofs of the following results follow in similar ways as the previous theorems, using Theorem \ref{MainThm3} with help of some identities in Section \ref{Preliminary}, so we omit the details.
  \begin{theorem}\label{AAthm3}
For any non-negative integer $N$, we have
\begin{align*}
2\,T(1,1,4;N)&=rpg(1,2,2;N)+Rt(1,6,6;N),\\
2\,T(1,1,4;2\,N)&=rpg(2,1,1;N)+Rt(2,3,3;N),\\
T(1,1,4;2\,N+1)&=tpg(4,1,1;N)+rT(3,3,4;N),\\
4\,T(1,1,8;N)&=r(1,4,4;4N+5)-r(1,4,16;4N+5),\\
T(1,1,8;2\,N)&=T(1,2,2;N),\\
T(1,1,8;2\,N+1)&=2T(1,4,4;N),\\
r(1,1,1;N)&=r(1,1,4;N)+r(1,4,4;4\,N)-r(1,4,16;4\,N),\\
r(1,4,4;4\,N+2)&=r(1,4,16;4\,N+2),\\
r(1,4,4;4\,N+3)&=r(1,4,16;4\,N+3),\\
T(2,3,3;4N)&=Rt(3,3,2;N),\\
T(2,3,3;4N+1)&=2\,T(1,3,12;2\,N-1),\\
T(2,3,3;4N+2)&=Rt(1,3,6;N),\\
T(2,3,3;4N+3)&=2\,T(1,3,12;2\,N),\\
T(2,7,7;4\,N)&=Rt(7,14,4;N)+Rt(2,7,12;N-3),\\
T(2,7,7;4\,N+2)&=rT(7,1,7;N),\\
    T(2,7,7;8\,N+3)&=2\,rT(1,7,14;N-2),\\
    T(2,7,7;8\,N+7)&=2\,rT(7,2,7;N),\\
    T(2,7,7;4\,N+1)&=2\,T(1,7,14;N-2),\\
T(2,5,5;4\,N)&=Rt(5,30,3;N)+rpg(5,1,10;N-3),\\
&=rP(5,4,5;N)+2\,rT(5,12,15;N-3),\\
T(2,5,5;4\,N+1)&=2tP(10,4,5;N-1)+2\,T(10,12,15;N-4),\\
&=2rT(30,10,3;N-1)+2tpg(10,1,10;N-4),\\
T(2,5,5;4\,N+2)&=rP(5,1,20;N)+2rT(5,3,60;N-7),\\
&=rpg(5,5,2;N)+Rt(5,6,15;N-1),\\
T(2,5,5;4\,N+3)&=2tpg(10,20,1;N-1)+4T(3,10,6;N-8),\\
T(2,5,5;20\,N+11)&=2tpg(10,5,2;5N+1)+2rT(12,2,3;N),\\
T(2,5,5;20\,N+3)&=2tpg(10,5,2;5N-1),\\
T(2,5,5;20\,N+7)&=2tpg(10,5,2;5N),\\
T(2,5,5;20\,N+15)&=2tpg(10,5,2;5N+2),\\
T(2,5,5;20\,N+19)&=2tpg(10,5,2;5N+3),\\
T(2,15,15;4\,N)&=rT(15,3,5;N),\\
T(2,15,15;4\,N+2)&=Rt(10,15,12;N)+Rt(6,15,20;N-1),\\
T(2,15,15;4\,N+3)&=2T(3,5,30;N-3),\\
T(2,15,15;8\,N+1)&=2rT(5,6,15;N-2),\\
T(2,15,15;8\,N+5)&=2rT(3,10,15;N-2).
\end{align*}
\end{theorem}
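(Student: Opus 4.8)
The plan is to dispatch each line of the theorem by the uniform three-step recipe already used in Theorems \ref{Athm1}--\ref{Athm12}: first specialize the eight integer parameters $k,r,g,h,u,v,i,j$ together with the signs $\epsilon_1,\epsilon_2,\epsilon_3$ in Theorem \ref{MainThm3} so that the triple product on the left of \eqref{3.3} collapses to the generating series of the representation count on the left of the target identity; next simplify the resulting finite $\alpha$-combination on the right with the auxiliary identities of Section \ref{Preliminary}; and finally read off the counts by equating coefficients of $q^{N}$ and dissecting into residue classes. The translation dictionary is fixed throughout, which is what makes the procedure mechanical: since $\varphi(q^{a})=\sum_{l\in\mathbb{Z}}q^{al^{2}}$, $\psi(q^{a})=\sum_{m\ge 0}q^{at_{m}}$, $X(q^{a})=f(q^{a},q^{2a})=\sum_{n\in\mathbb{Z}}q^{ap_{n}}$ and $Y(q^{a})=f(q^{a},q^{5a})=\sum_{n\in\mathbb{Z}}q^{ag_{n}}$, every product of three such theta functions is exactly the generating function of one of the counting functions $T,Rt,rP,rpg,rtg,tpg,\dots$ defined at the start of Section \ref{Applications}, and the side constraint $2S_{1}=r(k-r)S_{3}$ of Theorem \ref{MainThm3} is precisely what forces the three moduli appearing in any one identity to be compatible.

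I would treat the first line $2\,T(1,1,4;N)=rpg(1,2,2;N)+Rt(1,6,6;N)$ as the template. Its $q$-series form is $2\psi^{2}(q)\psi(q^{4})=\varphi(q)X(q^{2})Y(q^{2})+\varphi(q)\varphi(q^{6})\psi(q^{6})$, which one produces from \eqref{3.3} with $\epsilon_{1}=\epsilon_{2}=\epsilon_{3}=1$; the leading factor $2$ is supplied by the doubling relation \eqref{2.4} acting on a degenerate factor of the form $f(q^{s},1)=2\psi(q^{s})$, exactly as the constants $2$ and $4$ entered \eqref{cor51} and \eqref{Athm5.1}. After regrouping the surviving $\alpha$-terms into the shapes $f(q^{a},q^{2a})$ and $f(q^{a},q^{5a})$ they become the pentagonal factor $X(q^{2})$ and the octagonal factor $Y(q^{2})$, and comparing coefficients of $q^{N}$ gives the relation. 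Every subsequent line follows the same pattern, the only genuinely new ingredient being the occasional use of \eqref{2.10P}, \eqref{P2.10}, the dissections \eqref{varphi=} and \eqref{X(q)=}, or the product rule $f(a,b)f(-a,-b)=f(-a^{2},-b^{2})\varphi(-ab)$ of Corollary \ref{clp2}, to convert a stray factor into a clean $\varphi$, $X$ or $Y$ before extracting coefficients.

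The residue dissections are carried out exactly as before: one isolates the terms whose $q$-exponents lie in a prescribed arithmetic progression, replaces $q^{d}$ by $q$, and equates coefficients. The families $T(2,5,5;\cdot)$, $T(2,7,7;\cdot)$ and $T(2,15,15;\cdot)$ demand the largest specializations; for instance $k=6,r=1,g=u=15,h=v=5,i=6,j=2$ (so that $r(k-r)=5$ and $2S_{1}=40=5\cdot 8$) produces $\psi(q^{2})\psi^{2}(q^{5})$ on the left, and because $k=6$ the $\alpha$-sum in \eqref{3.3} contributes on the order of $2k$ distinct products. Dissecting these modulo $4$, $8$ or $20$ is what generates the long lists of sub-cases, the ``$=0$'' lines recording those progressions into which no surviving term ever falls. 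The minus signs and the odd argument shifts such as $r(1,4,4;4N+5)-r(1,4,16;4N+5)$ arise by solving \eqref{varphi=} for its octagonal piece, $2q\psi(q^{8})=\varphi(q)-\varphi(q^{4})$, whose division by $2q$ simultaneously explains the difference of two square-counts and the shift by an odd residue; and the two alternative right-hand sides recorded for several $T(2,5,5;\cdot)$ lines simply reflect two admissible packagings of the same $\alpha$-terms, obtained by applying \eqref{2.10P} in different groupings.

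The main obstacle is not any single computation but the bookkeeping of the matching and dissection together: one must reverse-engineer parameters that make the left side exactly the desired product while simultaneously making the right-hand $\alpha$-terms organize into the intended counting functions, and then check, progression by progression, that the terms outside each residue class vanish identically. Because \eqref{3.3} already guarantees each theta identity termwise, no convergence or analytic input is needed, so the difficulty is purely the systematic tracking of exponents and the iterated application of the dissection identities of Section \ref{Preliminary} — which is exactly why, once the method of Theorems \ref{Athm1}--\ref{Athm12} is in hand, the remaining details are routine and may be omitted.
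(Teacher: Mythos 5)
Your proposal follows exactly the route the paper intends: the paper itself omits the proof of this theorem, stating only that it follows as in Theorems \ref{Athm1}--\ref{Athm12} by specializing Theorem \ref{MainThm3} and invoking the identities of Section \ref{Preliminary}, which is precisely the recipe you describe, and your worked details (the template identity $2\psi^{2}(q)\psi(q^{4})=\varphi(q)X(q^{2})Y(q^{2})+\varphi(q)\varphi(q^{6})\psi(q^{6})$, the doubling via $f(1,q^{4})=2\psi(q^{4})$, and the parameter choices such as $k=6$, $r=1$ for the $T(2,5,5;\cdot)$ family) all check out against the constraints $S_1=S_2$ and $2S_1=r(k-r)S_3$. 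This is essentially the same approach as the paper, carried out correctly.
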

\begin{theorem}For any non-negative integer $N$, we have
\begin{align*}
rtg(3,12,1;4\,N)&=Pg(1,2,1;N),\\
rtg(3,12,1;2\,N+1)&=tP(6,1,1;N),\\
rtg(3,12,1;4\,N+2)&=0,\\
tpg(3,1,1;2N)&=rpg(3,1,1;N),\\
tpg(3,1,1;2N+1)&=2tP(6,2,1;N),\\
Pg(2,2,1;3N)&=Rp(2,3,4;N)+2rtg(3,4,2;N-1),\\
Pg(2,2,1;3N+1)&=rpg(2,4,1;N)+2tG(4,2,1;N-1),\\
Pg(2,2,1;3N+2)&=2rtg(6,4,1;N)+2rtg(2,12,1;N-1),\\
Pg(1,1,1;2N)&=rtp(3,3,1;2N)+4\,Tp(3,6,2;N-1),\\
Pg(1,1,1;2N+1)&=rtp(3,3,1;2N+1)+2\,rtg(3,3,1;N),\\
Tg(3,3,1;2N)&=rtg(3,6,1;2N)+tP(3,2,2;N),\\
Tg(3,3,1;2N+1)&=rtg(3,6,1;2N+1)+tG(3,1,1;N).
\end{align*}
\end{theorem}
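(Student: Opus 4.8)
The plan is to establish all twelve equalities by the same pipeline that produced Theorems~\ref{Athm1}--\ref{Athm12}: specialize the master identity~\eqref{3.3}, convert every theta factor into one of the four generating functions $\varphi,\psi,X,Y$ with the preliminary identities of Section~\ref{Preliminary}, and finish by a dissection into arithmetic progressions. The starting observation is that $\varphi(q^{a})$, $\psi(q^{a})$, $X(q^{a})$ and $Y(q^{a})$ are the generating series of $a\,(\text{square})$, $a\,t$, $a\,p$ and $a\,g$, so every triple product of them is the generating function of exactly one of the counting quantities $rtg,Pg,tpg,Tg,tP,\dots$ introduced above. Thus each stated equality is the coefficient form of a theta relation whose two sides are sums of such triple products, and the whole problem reduces to manufacturing that relation from~\eqref{3.3}.

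First I would, block by block, pin down the data $(k,r;g,h,u,v,i,j;\epsilon_{1},\epsilon_{2},\epsilon_{3})$ fed into~\eqref{3.3}. The admissibility conditions $\gcd(2k,r)\in\{1,2\}$, $\gcd(2k,k-r)=1$, $S_{1}=S_{2}>0$ and $2S_{1}=r(k-r)S_{3}$ must hold, and they essentially determine $(k,r)$ once the target product is fixed, since two of the three input factors must carry equal $S$. For the $Pg(2,2,1;\cdot)$ block, for instance, the generating product $X^{2}(q^{2})Y(q)$ pairs the two copies of $X(q^{2})=f(q^{2},q^{4})$ (each with $S=6$) against $Y(q)=f(q^{5},q)$ (with $S_{3}=6$), forcing $r(k-r)=2$ and hence $k=3,\ r=2$, with $(g,h,u,v,i,j)=(2,4,2,4,1,5)$ and $\epsilon_{1}=\epsilon_{2}=\epsilon_{3}=1$; here the $\alpha$-sum in~\eqref{3.3} runs over three residues and produces the threefold split directly, exactly as the six-term identity~\eqref{Athm12.11} of Theorem~\ref{Athm12} produced the sixfold split of $pG(2,1,1;\cdot)$. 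After the specialization the right-hand factors of~\eqref{3.3} are generic two-variable theta functions, so I would normalize them to standard argument by Lemma~\ref{l3}, collapse products of the type $f(a,ab^{2})f(b,a^{2}b)$ into $f(a,b)\psi(ab)$ via~\eqref{2.10P}, introduce pentagonal and octagonal pieces through~\eqref{P2.10} and~\eqref{X(q)=}, and recombine by~\eqref{varphi=}; where a term has to be created or annihilated, the opposite sign choices $\epsilon_{i}=\pm1$ together with Lemma~\ref{lemma1} and the vanishing $f(-1,a)=0$ of~\eqref{2.5} handle it, just as in the proof of Theorem~\ref{thm2}. This is precisely the step that turns a raw theta relation into one reading ``generating product $=$ sum of generating products.''

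Once such a clean relation $L(q)=\sum_{j}c_{j}\,q^{e_{j}}A_{j}(q)B_{j}(q)C_{j}(q)$ is in hand, the remaining work is combinatorial. For each term I would read off the common residue of its exponents modulo the relevant integer ($2$, $3$ or $4$ here): because each factor is a dilation $\varphi(q^{a}),\dots$, every monomial of a fixed term lies in one residue class, so that term feeds exactly one progression of $N$. Extracting the subseries for a progression, dividing by the common $q^{e}$ and sending $q^{d}\mapsto q$ turns each surviving term back into a triple product, i.e.\ a single counting function, and equating coefficients of $q^{N}$ gives the stated identity. The vanishing statements $rtg(3,12,1;4N+2)=0$ and the like are immediate: after the specialization no term of $L(q)$ has exponents in the excluded class. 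The mildly subtle feature is that in several relations a term is kept at the \emph{same} argument as the left side (e.g.\ $rtp(3,3,1;2N)$ against $Pg(1,1,1;2N)$, or $rtg(3,6,1;2N)$ against $Tg(3,3,1;2N)$): there the underlying relation is $A=B+C$ with $B$ kept whole and only the correction $C$ dissected by parity, so the visible modulus comes from the post-recombination dissection rather than from $k$ itself.

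The main obstacle is this very search-and-bookkeeping, not any isolated deep estimate. One must choose the eight parameters so that~\eqref{3.3} is legitimate \emph{and} so that, after the Section~\ref{Preliminary} reductions, every right-hand term degenerates into an honest product of three generating functions whose exponents sit in a single residue class---a demand that nearly fixes the choice but offers no a priori guarantee of success. I expect the delicate cases to be the modulo-$3$ dissections of $Pg(2,2,1;\cdot)$, where the threefold structure of the $\alpha$-sum must be matched against the residues of $p$ and $g$ modulo $3$, and the two $Pg(1,1,1;\cdot)$ relations, whose right sides are read at the same parity as the left and therefore require the parity dissection to be performed only on the correction terms. Everything past the parameter selection is routine verification, which is why these details are omitted.
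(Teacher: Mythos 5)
Your proposal follows essentially the same route as the paper: the authors explicitly omit the details of this theorem, stating only that the proofs "follow in similar ways as the previous theorems, using Theorem \ref{MainThm3} with help of some identities in Section \ref{Preliminary}," which is exactly the specialize--reduce--dissect pipeline you describe (and your parameter identification $k=3$, $r=2$ for the $Pg(2,2,1;\cdot)$ block matches the template of Theorem \ref{Athm12}). The approach is correct and aligned with the paper's intended argument.
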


\begin{theorem}\label{AAthm71}
For any non-negative integer $N$, we have
\begin{align*}
Rt(1,1,2;2\,N)&=rP(1,1,2;N)+2\,rT(1,3,6;N),\\
Tg(3,3,1;2N+1)&=rtg(3,6,1;2N+1)+tG(3,1,1;N),\\
Rt(1,1,2;2\,N)&=rP(1,1,2;N)+2\,rT(1,3,6;N),\\
Rt(1,1,2;2\,N+1)&=2tpg(2,1,1;N)+2rT(3,2,3;N),\\
rG(6,1,1;2\,N)&=rP(3,4,4;N)+rG(3,2,2;N-1),\\
rG(6,1,1;2\,N+1)&=2tP(6,1,1;N)-4Tg(6,6,1;N-1),\\
tP(6,1,1;N)&=rpg(3,4,2;N)+2Tg(6,6,1;N-1),\\
Rg(3,6,1;2N)&=P(1,1,2;N)-2tpg(6,2,1;N-1),\\
Rg(3,6,1;2N+1)&=Pg(4,4,1;N)+G(1,2,2;N-1),\\
rpg(3,4,1;4N)&=rP(3,1,2;N)+2tpg(6,1,1;N-1),\\
rpg(3,4,1;2N+1)&=tpg(3,1,1;N),\\
rpg(3,4,1;4N+2)&=0,\\
tpg(3,2,1;N)=&rtp(12,6,1;N)+2Tp(6,24,1;N-3),\\
=&rtp(3,6,1;N),\\
tG(4,1,1;4N+1)&=Tp(1,3,1;N)+rtp(2,3,4;N)+2Tg(3,4,2;N-1),\\
tG(4,1,1;4N+3)&=0,\\
2rtg(3,4,1;4\,N)&=rP(2,1,4;N)+tP(1,1,1;N)+2tpg(4,1,2;N-1).
\end{align*}
\end{theorem}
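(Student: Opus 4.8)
The plan is to prove every identity in the list by the uniform three-step recipe already exhibited in the proofs of Theorems~\ref{Athm1}--\ref{Athm12}: specialize Theorem~\ref{MainThm3}, repackage the resulting theta factors through the preliminary identities of Section~\ref{Preliminary}, and finally dissect by residue class and compare coefficients. The bookkeeping rests on the fact that each representation symbol is simply the coefficient extractor of a product of three classical theta functions, where a square contributes $\varphi$, a triangular number contributes $\psi$, a generalized pentagonal number contributes $X(q)=f(q,q^2)$, and a generalized octagonal number contributes $Y(q)=f(q,q^5)$; for a coefficient $a_i$ one replaces $q$ by $q^{a_i}$ in the corresponding factor.

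Take the first identity, $Rt(1,1,2;2N)=rP(1,1,2;N)+2\,rT(1,3,6;N)$, as a model. Under the dictionary above its left-hand series is $\varphi^2(q)\psi(q^2)=\sum_N Rt(1,1,2;N)q^N$, while the two right-hand series are $\varphi(q)X(q)X(q^2)$ and $2\,\varphi(q)\psi(q^3)\psi(q^6)$. I would search for a specialization of Theorem~\ref{MainThm3} (with all $\epsilon_i=1$), followed where necessary by one or two applications of \eqref{P2.10}, \eqref{2.10P}, \eqref{varphi=} or \eqref{X(q)=}, producing a theta identity whose even part, after the substitution $q^2\mapsto q$, collapses to $\varphi(q)X(q)X(q^2)+2\,\varphi(q)\psi(q^3)\psi(q^6)$ and whose left-hand coefficients reproduce $Rt(1,1,2;N)$ -- in exact analogy with the way \eqref{cor51} and \eqref{cor52} are produced from $k=2$, $r=1$. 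The auxiliary identities are precisely the tool that turns the raw summands of \eqref{3.3} into the clean products $X(q^2)X(q^4)$, $\psi(q^6)\psi(q^{12})$, and so on.

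The third step is the arithmetic dissection. I would extract from both sides only the terms $q^{mN+s}$ for the modulus $m\in\{2,4,8,6,20,\dots\}$ and offset $s$ dictated by the target line, divide by $q^s$, replace $q^m$ by $q$, and equate coefficients of $q^N$. The powers of $q$ factored out during this step are exactly what produce the shifted arguments $N-1$, $N-3$, $N-5$, etc. on the right, while a residue class in which the left series is identically zero yields the vanishing statements such as $rpg(3,4,1;4N+2)=0$ and $tG(4,1,1;4N+3)=0$. The alternative equalities (for example the two expressions given for $tpg(3,2,1;N)$) arise by applying two different preliminary identities to the same specialization.

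This coefficient comparison is mechanical once the theta identity is in hand. The genuine obstacle, and where essentially all the effort lies, is the reverse-engineering of the first two steps: for each target line one must guess the parameter tuple $(k,r,g,h,u,v,i,j,\epsilon_i)$ in Theorem~\ref{MainThm3} subject to $S_1=S_2$ and $2S_1=r(k-r)S_3$, together with the right sequence of auxiliary identities from Section~\ref{Preliminary}, so that every summand on the right collapses to a product of three theta functions matching a listed representation function. Because this is a case-by-case search with no single closed formula, and because the subsequent manipulations are identical in spirit to those carried out in full for Theorems~\ref{Athm1}--\ref{Athm12}, I would record only the specialization used for each line and omit the repetitive computations, just as the authors do for Theorem~\ref{AAthm3} and the theorem immediately preceding this one.
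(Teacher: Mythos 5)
Your proposal matches the paper exactly: the authors give no proof of Theorem~\ref{AAthm71} beyond the remark that it "follows in similar ways as the previous theorems, using Theorem~\ref{MainThm3} with help of some identities in Section~\ref{Preliminary}," which is precisely the specialize--repackage--dissect recipe you describe, and your generating-function dictionary ($\varphi$, $\psi$, $X$, $Y$ for squares, triangular, pentagonal and octagonal numbers) is the correct bookkeeping. The only caveat is that, like the paper, you stop short of recording the actual parameter tuples $(k,r,g,h,u,v,i,j)$ for each of the seventeen lines, so neither account is verifiable line by line without redoing that search.
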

\begin{corollary} For $k\in\mathbb{N}_{0}$, we have the following:      
\begin{enumerate}
\item Each non-negative integer of the form $4\, k +2$ cannot be
represented in the form $3 l^2 + 12t_m +  g_n$, ($l, m, n \in \mathbb{Z}$).
    \item Each non-negative integer of the form $4\, k +2$ cannot be
represented in the form $3 l^2 + 4p_m +  g_n$, ($l, m, n\in \mathbb{Z}$).
\item Each non-negative integer of the form $4\, k +3$ cannot be
represented in the form $4 t_l + g_m +  g_n$, ($l, m, n \in \mathbb{Z}$).
\item Each non-negative integer of the form $4\, k +2$ cannot be
represented in the form $3l^2 +4 t_m +  g_n$, ($l, m, n \in \mathbb{Z}$).
\item Each non-negative integer of the form $4\, k +2$ cannot be
represented in the form $9l^2 +4 p_m + 3 g_n$, ($l, m, n \in \mathbb{Z}$).
\item Each non-negative integer of the form $4\, k +1$ cannot be
represented in the form $3l^2 +3 m^2 + 4 t_n$, ($l, m, n \in \mathbb{Z}$).
\end{enumerate}
\end{corollary}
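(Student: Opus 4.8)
The plan is to read off each of the six non-representability statements from the \emph{vanishing} of the corresponding ternary representation count, exactly as was done for the ``$=0$'' corollaries following Theorems \ref{Athm8}, \ref{Athm9} and \ref{Athm11}. First I would match each form to its counting function from Section \ref{Applications}: the form $3l^2+12t_m+g_n$ is counted by $rtg(3,12,1;N)$, the form $3l^2+4p_m+g_n$ by $rpg(3,4,1;N)$, the form $4t_l+g_m+g_n$ by $tG(4,1,1;N)$, the form $3l^2+4t_m+g_n$ by $rtg(3,4,1;N)$, the form $9l^2+4p_m+3g_n$ by $rpg(9,4,3;N)$, and $3l^2+3m^2+4t_n$ by $Rt(3,3,4;N)$. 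Parts (1)--(3) are then immediate from identities already recorded: $rtg(3,12,1;4N+2)=0$ appears in the (unlabelled) theorem just before Theorem \ref{AAthm71}, while $rpg(3,4,1;4N+2)=0$ and $tG(4,1,1;4N+3)=0$ are two of the lines of Theorem \ref{AAthm71} itself, and these say precisely that the counts vanish on the stated residue classes.

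For parts (4)--(6) the plan is to run the same machine: specialize Theorem \ref{MainThm3} at $k=2$, $r=1$ with the appropriate $g,h,u,v,i,j$ and signs $\epsilon_i$ (subject to $S_1=S_2$ and $2S_1=r(k-r)S_3$), and then reduce the right-hand side with the product formulas \eqref{2.10P}, \eqref{P2.10} and \eqref{X(q)=} until it is displayed as a sum of products of three theta functions sorted by the residue of the exponent of $q$. The conclusions $rtg(3,4,1;4N+2)=0$, $rpg(9,4,3;4N+2)=0$ and $Rt(3,3,4;4N+1)=0$ then follow because no summand contributes a power of $q$ lying in the offending residue class.

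The cleanest uniform justification, which also serves as a check on the parameter bookkeeping, is a direct congruence argument modulo $4$. Using $l^2\equiv 0,1$, $9l^2\equiv 0,1$, $g_n\equiv 0,1$, $3l^2\equiv 0,3$, $3g_n\equiv 0,3$ and $4t_m\equiv 12t_m\equiv 4p_m\equiv 0 \pmod 4$, the attainable residues of each form make up a sumset that omits the target class: $3l^2+g_n\in\{0,1,3\}$ omits $2$ (parts (1), (2) and (4)), $l^2+3g_n\in\{0,1,3\}$ omits $2$ (part (5)), $g_m+g_n\in\{0,1,2\}$ omits $3$ (part (3)), and $3l^2+3m^2\in\{0,2,3\}$ omits $1$ (part (6)). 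Each omission is exactly the vanishing encoded by the theta identities above, so the two arguments agree.

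The only real work lies in the second paragraph: producing the three auxiliary identities for parts (4)--(6) from Theorem \ref{MainThm3}. The hard part is purely computational, namely choosing parameters so that $2S_1=r(k-r)S_3$ holds and the triple product collapses under \eqref{2.10P}/\eqref{P2.10} into a combination whose exponents visibly miss the target residue; the modulo-$4$ calculation of the preceding paragraph both predicts the outcome in advance and guards against sign or coefficient slips in the specialization.
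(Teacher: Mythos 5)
Your proposal is correct, and it actually contains two proofs. The first (paragraphs one and two) is essentially the paper's route: each part of the corollary is the statement that a ternary counting function vanishes on a residue class, and those vanishing identities are read off from the theta-function decompositions obtained by specializing Theorem \ref{MainThm3}. Your matching of forms to counting functions is accurate, and the three identities you propose to derive for parts (4)--(6) --- $rtg(3,4,1;4N+2)=0$, $rpg(9,4,3;4N+2)=0$ and $Rt(3,3,4;4N+1)=0$ --- are in fact already recorded in the paper (in Theorem \ref{AAthm18}), so no new computation is needed there; the paper simply leaves the corollary as an immediate consequence of these lines. Your third paragraph, however, is a genuinely different and strictly more elementary argument: the residues modulo $4$ of $l^2$, $3l^2$, $9l^2$, $g_n$, $3g_n$ all lie in two-element sets, and $4t_m$, $12t_m$, $4p_m$ vanish mod $4$, so each form's attainable residues form a sumset that visibly omits the forbidden class. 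This mod-$4$ computation is correct in all six cases ($\{0,3\}+\{0,1\}=\{0,1,3\}$ misses $2$; $\{0,1\}+\{0,1\}=\{0,1,2\}$ misses $3$; $\{0,3\}+\{0,3\}=\{0,2,3\}$ misses $1$) and proves the corollary with no theta-function machinery at all. What the paper's approach buys is more information --- the full decomposition identities give exact formulas for the counting functions on the other residue classes, of which the vanishing is only one byproduct --- whereas your congruence argument buys a short, self-contained, verifiable proof of exactly the stated non-representability and a useful sanity check on the parameter bookkeeping in the specializations.
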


\begin{theorem}\label{AAthm18}
For any non-negative integer $N$, we have
\begin{align*}
rtg(3,4,1;4\,N+2)&=0,\\
rpg(9,4,3;4N)&=rP(18,3,4;N)+tP(9,1,3;N-1)+2tpg(36,3,2;N-5),\\
rpg(9,4,3;4N+2)&=0,\\
Rt(3,3,4;4N+3)&=2T(1,3,3;N)+2rT(6,3,4;N)+2rT(2,3,6;N-1),\\
Rt(3,3,4;4N+1)&=0,\\
rtp(3,6,1;N)&=tP(3,2,8;N)+tpg(3,2,4;N-1),\\
tP(3,1,2;N)&=rpg(6,4,1;N)+2rG(12,1,2:N-2),\\
2Tp(3,6,1;N)&=tpg(6,2,1;N)+pG(4,1,2;N),\\
P(1,1,2;N)&=Rp(3,6,4;N)+2Tp(3,6,1;N-1)+2rtg(3,12,2;N-2),\\
r(2,3,3;2N)&=r(3,4,12;N)+2T(2,3,3;N-1)+4rT(3,8,24;N-4),\\
G(2,3,3;2N)&=Rg(9,36,4;N)+rtp(9,18,2;N-1)+2rtp(9,72,8;N-8),\\
rG(2,1,1;2N)&=rtp(3,2,2;N)+Rg(3,4,4;N-1)+2rtp(3,8,8;N-1),\\
Rg(2,3,1;2N+1)&=rG(4,1,4;N-1)+tpg(2,2,1;N-1)+2tpg(8,8,1;N-2).
\end{align*}
\end{theorem}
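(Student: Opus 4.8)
The plan is to treat every line of Theorem \ref{AAthm18} by the same three-stage template that produced Theorems \ref{Athm1}--\ref{Athm12}, exploiting the observation that, as the four generating series of Section \ref{Preliminary} show, $\varphi(q)$, $\psi(q)$, $X(q)=f(q,q^2)$ and $Y(q)=f(q,q^5)$ are precisely the generating functions of squares, triangular numbers, generalized pentagonal numbers $p_n$ and generalized octagonal numbers $g_n$. Consequently any product of three of these evaluated at powers of $q$ has, as the coefficient of $q^N$, exactly one of the counting functions $r$, $T$, $P$, $G$, $Rt$, $Rp$, $Rg$, $Tp$, $Tg$, $rT$, $rP$, $rG$, $pG$, $tP$, $tG$, $Pg$, $rtp$, $rtg$, $rpg$, $tpg$ defined at the start of Section \ref{Applications}. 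So for each identity I would first produce a single theta-function identity, then interpret its coefficients, and finally dissect modulo a small integer.

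For the first stage I specialize Theorem \ref{MainThm3} at $k=2,r=1$ or $k=3,r=2$ (the two admissible small cases, in which $\gcd(2k,r)$ and $\gcd(2k,k-r)$ satisfy the hypotheses), with $\epsilon_1=\epsilon_2=\epsilon_3=1$, choosing the six exponents $(g,h,u,v,i,j)$ so that the left-hand side of \eqref{3.3} collapses to the product of three figurate generating functions appearing on the left of the target line; for example taking $\{i,j\}=\{4,2\}$ or $\{10,2\}$ after rescaling manufactures an $X$ or a $Y$ factor. The finite sum on the right of \eqref{3.3} then becomes a short linear combination of products of three theta functions, each of which I reduce to a product of $\varphi,\psi,X,Y$ at powers of $q$ using \eqref{2.4}, \eqref{2.5}, the factorization $f(a,b)f(-a,-b)=f(-a^2,-b^2)\varphi(-ab)$ of \cite[Entry 30(iv)]{Adiga3}, and Lemma \ref{l3} whenever a reindexing throws up a theta function $f(q^{-r},q^s)$ with a negative argument.

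For the lines whose right-hand sides mix pentagonal or octagonal data with triangular data (the $rpg$, $tpg$, $rtg$ and $Rt$ entries), the second stage uses the auxiliary splitting identities: \eqref{P2.10} to break a single $X$ or $\psi$ into its even and odd parts, a squaring step exactly as in the proof of Theorem \ref{Athm11} when a squared factor is required, and then \eqref{2.10P} to collapse a surviving product of two theta functions into the form $f(a,b)\psi(ab)$. Together with \eqref{varphi=} and \eqref{X(q)=} this brings the master identity into the shape (product of three generating functions) $=$ (sum of such products at shifted powers of $q$). Reading off the coefficient of $q^N$ gives a raw relation among the counting functions, and dissecting the exponents modulo $2$, $4$ or $6$, followed by replacing $q^m$ by $q$ in each residue class, separates the several congruence cases; a residue class that is unattainable by the relevant quadratic form yields the vanishing statements such as $rtg(3,4,1;4N+2)=0$ and $rpg(9,4,3;4N+2)=0$.

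The main obstacle is purely the algebraic bookkeeping in the three-term identities carrying large index shifts, for instance $rpg(9,4,3;4N)=rP(18,3,4;N)+tP(9,1,3;N-1)+2tpg(36,3,2;N-5)$ and $G(2,3,3;2N)=Rg(9,36,4;N)+rtp(9,18,2;N-1)+2rtp(9,72,8;N-8)$. Here one must choose the six parameters in Theorem \ref{MainThm3} so that, after the chain \eqref{P2.10}$\to$squaring$\to$\eqref{2.10P}, the powers of $q$ inside every theta factor match the advertised arguments, while simultaneously tracking the free powers of $q$ that become the integer shifts $N-1$, $N-5$, $N-8$. Verifying that the dissection residues are exactly right, so that no cross terms leak between congruence classes and the claimed shifts are correct, is the step I would check most carefully, cross-validating each line against small-$N$ numerical values in the manner of the Examples.
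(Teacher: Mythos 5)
Your plan reproduces exactly the method the paper itself invokes for this theorem: the paper omits the details, stating only that these identities ``follow in similar ways as the previous theorems, using Theorem \ref{MainThm3} with help of some identities in Section \ref{Preliminary}'', and your template --- specializing Theorem \ref{MainThm3} at $k=2,r=1$ or $k=3,r=2$, reducing via \eqref{P2.10}, \eqref{2.10P}, \eqref{varphi=}, \eqref{X(q)=} and Lemma \ref{l3}, reading off coefficients, and dissecting by residue classes --- is precisely the procedure carried out explicitly in Theorems \ref{Athm1}--\ref{Athm12}. The proposal is correct in approach and matches the paper's (implicit) proof.
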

\bibliographystyle{siam}
\bibliography{myref}

\end{document}